\newtheorem{thm}{Theorem}
\newtheorem{assumption}[thm]{Assumption}
\newtheorem{lem}[thm]{Lemma}
\newtheorem{cor}[thm]{Corollary}
\newtheorem{defi}[thm]{Definition}
\newtheorem{prop}[thm]{Proposition}
\newtheorem{rk}[thm]{Remark}
\newcommand{\field}[1]{\mathbb{#1}}
\newcommand{\E}{\field{E}}
\newcommand{\GG}{\field{G}}
\newcommand{\PP}{\field{P}}
\newcommand{\R}{\field{R}}
\newcommand{\vip}{\vskip.2cm}
\DeclareMathOperator*{\argmax}{argmax}
\DeclareMathOperator*{\sgn}{sgn}
\newcommand{\Ttransition}{{\mathcal P}}
\newcommand{\ktilde}{\widetilde{\kappa}}
\newcommand{\supp}{\text{supp}}
\begin{document}

\title[Estimation in a randomly structured branching population]{Statistical estimation in a randomly structured branching population}
\author{Marc Hoffmann and Aline Marguet}

%
%
\address{Marc Hoffmann, Universit\'e Paris-Dauphine PSL, CEREMADE, 75016 Paris, France. 
}
\email{hoffmann@ceremade.dauphine.fr}
\address{Aline Marguet, Univ. Grenoble Alpes, INRIA, 38000 Grenoble, France.}
\email{aline.marguet@inria.fr }
\begin{abstract}
We consider a binary branching process structured by a stochastic trait that evolves according to a diffusion process that triggers the branching events, in the spirit of Kimmel's model of cell division with parasite infection. Based on the observation of the trait at birth of the first $n$ generations of the process, we construct nonparametric estimator of the transition of the associated bifurcating chain and study the parametric estimation of the branching rate. In the limit $n \rightarrow \infty$, we obtain asymptotic efficiency in the parametric case and minimax optimality in the nonparametric case. 
\end{abstract}
\maketitle
{\small \noindent \textbf{Mathematics Subject Classification (2010)}:  62G05, 62M05, 60J80, 60J20, 92D25.\\
\textbf{Keywords}: Branching processes, bifurcating Markov chains, statistical estimation, geometric ergodicity, scalar diffusions.}
\section{Introduction}
\subsection{Motivation}

The study of structured populations, with a strong input from evolutionary or cell division modelling in mathematical biology (see for instance the textbooks \cite{Meleard, Perthame} and the references therein) has driven the statistics of branching Markov processes over the last few years. Several models have been considered, with data processed either in discrete or continuous time. In this context,  one typically addresses the inference of critical parameters like branching rates, modelled as functions of biological traits like age, size and so on. In many cases, this approach is linked to certain piecewise deterministic Markov models or bifurcating Markov chains (BMC) in discrete time. These models are well understood from a probabilist point of view (in discrete time Guyon \cite{guyon2007limit}, Bitseki-Penda {\it et al.} \cite{Guillin2, Guillin1}, in continuous time Bansaye and M\'el\'eard \cite{BaMe}, Bansaye {\it et al.} \cite{BDMT} or more recently Marguet \cite{marguet2016} for a general approach). For the statistical estimation, we refer to  \cite{BHO2016, BDHR, DHKR1, HO, BasawaZhou}, and the references therein, see also Bitseki-Penda and Olivier \cite{BO}, de Saporta {\it et al.} \cite{GP3, GP1}, Aza\"\i s {\it et al.} \cite{GP2} or recently Bitseki-Penda and Roche \cite{BRoche}. In these models, the traits of a population between branching events like cell division evolve through time according to a dynamical system. The next logical step is to replace this deterministic evolution by a random flow, that allows one to account for traits that may have their own random evolution according to some exogeneous input. A paradigmatic example is Kimmel's model (see Kimmel \cite{Kimmel} and Bansaye \cite{Bansaye}) where the trait is given by a density of parasites within a cell that evolve according to a diffusion process. The statistical analysis of such models is the topic of the present paper.\\

 We consider a population model with binary division triggered by a trait $x \in \mathcal{X}$ where  $\mathcal X \subseteq \mathbb{R}$ is an open (possibly unbounded) interval. The trait $\phi_x(t)$ of each individual evolves according to
\begin{equation} \label{stoch flow}
d\phi_x(t)=r(\phi_x(t))dt+\sigma(\phi_x(t))dW_t,\;\;\phi_{x}(0)=x,
\end{equation}
where $r,\sigma:\mathcal X \rightarrow \mathcal X$ are regular functions and $(W_t)_{t \geq 0}$ is a standard Brownian motion. Each individual with trait $x$ dies according to a killing or rather division rate $x \mapsto B(x)$, {\it i.e.} an individual with trait $\phi_x(t)$ at time $t$ dies with probability $B(\phi_x(t))dt$ during the interval $[t,t+dt]$. At division, a particle with trait $y$ is replaced by two new individuals with trait at birth given respectively by $\theta y$ and $(1-\theta)y$ where $\theta$ is  drawn according to $\kappa(y)dy$ for some probability density function $\kappa(y)$ on $[0,1]$.The model is described by the traits of the population, formally given as a Markov process
\begin{equation} \label{def big Markov}
X(t) =(X_1(t), X_2(t),\ldots),\;\;t\geq 0
\end{equation}
with values in $\bigcup_{k \geq 1}\mathcal X^k$, where the $X_i(t)$ denote the (ordered) traits of the living particles at time $t$. Its distribution is entirely determined by an initial condition at $t=0$ and by the parameters $(r,\sigma,B,\kappa)$.\\  

\subsection{Statistical setting by reduction to a bifurcating Markov chain model}

We assume we have data at branching events ({\it i.e.} at cell division) and we wish to make inference on the parameters of the model. Using the Ulam-Harris-Neveu notation, for $m\geq 0$, let  $\GG_m = \{ 0,1 \}^m $ (with $\GG_0=\{\emptyset\}$) and introduce the infinite genealogical tree 
\begin{align*}
\mathbb T=\bigcup_{m\in\mathbb{N}}\mathbb G_m.
\end{align*}
For $u \in \mathbb G_m$, set $|u|=m$ and define the concatenation $u0 = (u,0)\in \mathbb G_{m+1}$ and $u1=(u,1)\in \mathbb G_{m+1}$. For $n \geq 0$, let $\mathbb T_n = \bigcup_{m=0}^n \mathbb G_m$ denote the genealogical tree up to the $n$-th generation and $|\mathbb T_n|$ denote its cardinality. We denote by $X_u$ the trait at birth of an individual $u\in\mathbb{T}$. From the branching events, we assume that  
we observe
$$\mathbb X^n = (X_u)_{u \in \mathbb  U_n},$$
where $\mathbb U_n \subseteq \mathbb T_n$ is what we call a regular {\it incomplete tree}, that is a connected subtree of $\mathbb T_n$ that contains at least one individual at the $n$-th generation (see the formal definition \ref{def incomplete tree} in Section \ref{sec conv empirical measures} below) and with cardinality of order $2^{\varrho n}$ for some $0 \leq \varrho \leq 1$. This observation scheme is motivated by typical datasets available in biological experiments, see {\it e.g.} Robert {\it et al.} \cite{DHKR2} and the refrences therein: when moving from generation $m-1$ to $m$ (for $m=1,\ldots, n$) we possibly lose some information, quantified by $\varrho$, due to experimental anomalies or simply because of the design of the experimental process (for instance, in the extreme case $\varrho = 0$, it may well happen that one observes only a single lineage of the bifurcating process due to experimental constraints, as some datasets studied in \cite{DHKR2}). We thus have approximately $2^{\varrho n}$ random variables with value in $\mathcal X$ with a certain Markov structure. Asymptotics are taken as $n$ grows to infinity. An example of trajectory is represented on Figure \ref{fig:trajectoire} with the associated genealogy.
\begin{figure}
\begin{tabular}{m{0.7\textwidth}m{0.3\textwidth}}
\includegraphics[scale=0.27]{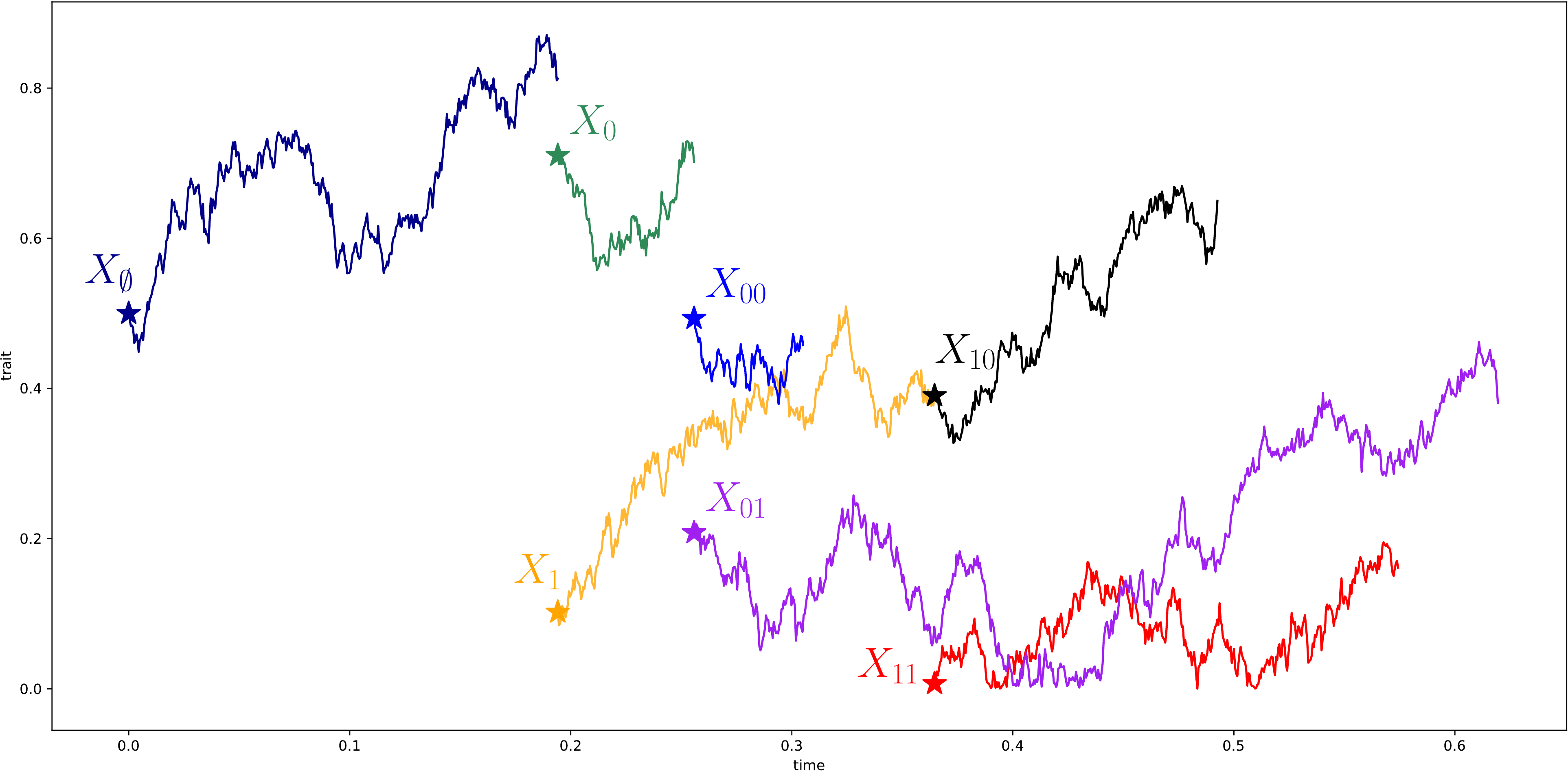}&
\includegraphics[scale=0.45]{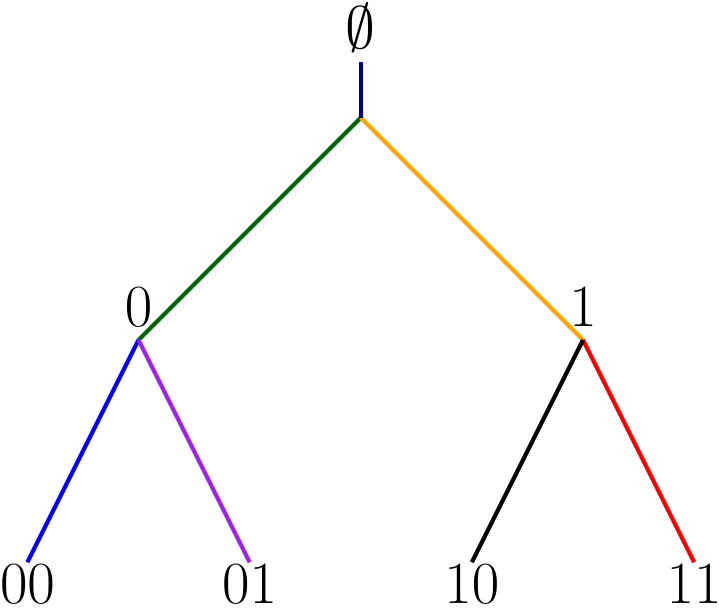}
\end{tabular}
\caption{Example of a trajectory and its associated genealogy.}
\label{fig:trajectoire}
\end{figure}\\ 

There are several objects of interest that we may try to infer from the data $\mathbb X^n$. First, one may notice that the Markov structure of $X$ in \eqref{def big Markov} turns $(X_u, u \in \mathbb T)$ into a {\it bifurcating Markov chain} according to the terminology introduced Basawa and Zhou \cite{BasawaZhou}, later highlighted by Guyon \cite{guyon2007limit}.
A bifurcating Markov chain is specified by {\bf 1)} a measurable state space, here $\mathcal X$ (endowed with its Borel sigma-field) with a Markov kernel $\Ttransition$ from $\mathcal X$ to $\mathcal X \times \mathcal X$ and 
{\bf 2)} a filtered probability space $\big(\Omega, \mathcal F, (\mathcal F_m)_{m \geq 0}, \PP\big)$. 
Following Guyon, 
\cite{guyon2007limit}, Definition 2,  
we have the 
\begin{defi} \label{BHO_def BMC}
A bifurcating Markov chain (BMC) is a family $(X_u)_{u \in \mathbb T}$ of random variables with value in $\mathcal X$ such that 
$X_u$ is $\mathcal F_{|u|}$-measurable for every $u \in \mathbb T$ and 
\begin{equation} \label{def:transBMC}
\E\big[\prod_{u \in \mathbb G_m} \psi_u(X_u, X_{u0}, X_{u1})\,\big|\,\mathcal F_m\big]= \prod_{u \in \mathbb G_m}\Ttransition \psi_u(X_u)
\end{equation}
for every $m \geq 0$ and any family of (bounded) measurable functions $(\psi_u)_{u \in \mathbb G_m}$, where
$\Ttransition \psi(x)=\int_{\mathcal X \times \mathcal X}\psi(x,y_1,y_2)\Ttransition(x,dy_1dy_2)$ denotes the action of $\Ttransition$ on $\psi$.
\end{defi}
The distribution of $(X_u)_{u \in \mathbb T}$ is thus entirely determined by $\Ttransition$ and an initial distribution for $X_{\emptyset}$. 
A key role for understanding the asymptotic behavior of the bifurcating Markov chain is the so-called {\it tagged-branch} chain, that consists in picking a lineage at random in the population $(X_u)_{u \in \mathbb T}$: it is a Markov chain with value in $\mathcal X$ defined by $Y_0=X_{\emptyset}$ and for $m \geq 1$:
$$Y_m=X_{\emptyset \epsilon_1\ldots \epsilon_m},$$
where $(\epsilon_m)_{m \geq 1}$ is a sequence of independent Bernoulli random variables with parameter $1/2$, independent of $(X_u)_{u \in \mathbb T}$, with transition 
\begin{equation} \label{def:meanTransition}
\mathcal Q=(\mathcal P_0+\mathcal P_1)/2
\end{equation} 
obtained from the marginal transitions of $\mathcal P$:
$$\mathcal P_0(x,dy)=\int_{y_1 \in \mathcal X}\mathcal P(x,dy\,dy_1)\;\;\text{and}\;\;\mathcal P_1(x,dy)=\int_{y_0 \in \mathcal X}\mathcal P(x,dy_0dy).$$
Guyon proves in \cite{guyon2007limit} that if $(Y_m)_{m \geq 0}$ is ergodic with invariant measure $\nu(dx)$ on $\mathcal X$, then a convergence of the type
\begin{equation} \label{convGuyon}
\frac{1}{|\mathbb T_{n}|} \sum_{u \in \mathbb T_{n}}\psi(X_u,X_{u0},X_{u1}) \rightarrow \int_\mathcal X \mathcal P\psi(x)\nu(dx)
\end{equation}
holds as $n \rightarrow \infty$ for appropriate test functions $g$, almost surely and appended with appropriate central limit theorems (Theorem 19 in \cite{guyon2007limit}). Under appropriate regularity assumptions, an analogous result shall hold when $\mathbb T_n$ is replaced by a regular incomplete tree $\mathbb U_n$.

\subsection{Main results}\label{sec:main results}

In this context, there are several quantities that can be inferred from the data $\mathbb X_n$ as $n$ grows and that are important in order to understand the dynamics of $(X_u)_{u\in \mathbb T}$.  Under suitable assumptions on the stochastic flow \eqref{stoch flow}, the transition $\mathcal Q$ admits an invariant measure $\nu$ and we have fast convergence of the tagged-chain $(Y_m)_{m \geq 1}$ to equilibrium. This enables us to construct in a first part nonparametric estimators of $\nu$ and $\mathcal Q$ with an optimal rate of convergence and reveals the structure of the underlying BMC.\\ 

However, estimators of $\nu$ and $\mathcal Q$ do not give us any insight about the parameters $(r,\sigma,B,\kappa)$ of the model. In a second part, we investigate the inference of the division rate $x \mapsto B(x)$ as a function of the trait $x \in \mathcal X$ when the other parameters $r,\sigma$ and $\kappa(x)$ are known. This  seemingly stringent assumption is necessary given the observation scheme $\mathbb X^n$. If extraneous data were available, estimators of the parameters $r,\sigma$ and $\kappa$ could be obtained in a relatively straightforward manner:
\begin{itemize}
\item[i)] As soon as a discretisation of the values of the flow are available, standard techniques about inference in ergodic diffusions can be applied to recover $x\mapsto r(x)$ and $x \mapsto \sigma(x)^2$, see for instance \cite{H4,kutoyants2013statistical}.
\item[ii)] The fact that an individual $u$ distributed its traits to its offspring in a conservative way enables one to recover the fraction $\theta_u$ distributed among the children. Indeed
the individual $u$ born at $b_u$ with lifespan $d_u-b_u$ has trait $\phi_{X_{u}}(d_u-b_u)$ at its time of death.
It follows that its children have trait at birth given by
$$X_{u0}=\theta_u\phi_{X_{u}}(d_u-b_u),\;\;X_{u1}=(1-\theta_u)\phi_{X_{u}}(d_u-b_u),$$
where the $\theta_u$ are drawn independently from the distribution $\kappa(x)dx$ 
and therefore, the relationship
$\frac{X_{u_0}}{X_{u_1}} = \frac{\theta_u}{1-\theta_u}$
identifies $\theta_u$. In turn, the estimation of $x \mapsto \kappa(x)$ reduces to a standard density estimation problem from data $(\theta_u)_{u \in \mathbb U_n}$, see for instance \cite{hoang2015estimating}.
\end{itemize} 
The identification and estimation of the branching rate $x \mapsto B(x)$ from data $\mathbb X^n$ is more delicate and is the topic of the second part of the paper. Under minimal regularity assumptions developed in Section \ref{modele proba} below, it is not difficult to obtain an explicit representation of the transition $\mathcal Q(x,dy)=\mathcal Q_B(x,dy)=q_B(x,y)dy$ that reads
\begin{equation} \label{expl transit}
q_B(x,y)= \int_0^1\Big(\frac{\kappa(z)+\kappa(1-z)}{2z}\Big)B(y/z)\sigma(y/z)^{-2}\mathbb{E}\Big[\int_0^{\infty}e^{-\int_0^t B(\phi_x(s))ds}dL_t^{y/z}(\phi_x)\Big]dz,
\end{equation}
where $L_t^y(\phi_x)$ denotes the local time at $t$ in $y$ of the semimartingale $(\phi_x(t))_{t \geq 0}$. Assuming $(r,\sigma,\kappa)$ known (or identified by extraneous observation schemes) we study the estimation of $x \mapsto B(x)$ when $B$ belongs to a parametric class of functions $\{B_\vartheta, \vartheta \in \Theta\}$ for some regular subset of the Euclidean space $\R^d$. Under a certain ordering property (Definition \ref{def:ordering} in Section \ref{sec:parametric estimation} below) that ensures identifiability of the model and suitable standard regularity properties, we realise a standard maximum likelihood proxy estimation of $B$ thanks to \eqref{expl transit} by maximising the contrast
$$\vartheta \mapsto \prod_{ u \in \mathbb U_{n}^\star}q_{B_\vartheta}(X_{u^-},X_u),\;\;\vartheta \in \Theta,$$
(with $\mathbb U_n^\star=\mathbb U_n \setminus \mathbb G_0$ and where $u^-$ denotes the unique parent of $u$)
and we prove that it achieves asymptotic efficiency and discuss its practical implementation. It is noteworthy that for the parametric estimation of $B$, there is no straightforward contrast minimisation procedure (at least we could not find any) whereas $q_B(x,y)$ is explicit. The fairly intricate dependence of $B$ in the representation \eqref{expl transit} makes however the whole scheme relatively delicate, both mathematically and numerically.\\

Clearly, other observation schemes are relevant in the context of cell division modelling. For instance, one could consider a (large) time $T>0$ and observe the branching process $X_t$ defined in \eqref{def big Markov} for every $t \in [0,T]$. This entails the possibility to extract the times $(T_u)$ at which branching events occur, like {\it e.g.} in \cite{HO}. However, the continuous time setting is drastically different and introduce the additional difficulty of bias sampling, an issue we avoid in the present context. Alternatively, one could consider the augmented statistical experiment where one observes $(X_u,T_u)_{u \in {\mathbb U}_n}$, but the underlying mathematical structure is presumably not simpler. Our results show in particular that for the parametric estimation of the branching rate $B$, although the times at which branching event occur are statistically informative, their observation is not necessary to obtain optimal rates of convergence as soon as $(r,\sigma,\kappa)$ are known.

\subsection{Organisation of the paper}
Section \ref{sec:defhyp modele} is devoted to the construction of the stochastic model, our assumptions and the accompanying statistical experiments. In particular, we have a nice structure enough so that explicit representations of $\mathcal P$ and $\mathcal Q$ are available (Proposition \ref{prop:explicit trans}). We give a first result on the geometric ergodicity of the model via an explicit Lyapunov function in Proposition \ref{th:ergodicity} and derive in Proposition \ref{thm:convl2} a rate of convergence for the variance of empirical measures of the data $\mathbb X^n=(X_u)_{u \in \mathbb U_n}$ against test functions $\varphi(X_u)$ or $\psi(X_{u^-},X_u)$ with a sharp control in terms of adequate norms for $\varphi,\psi$ that do not follow from the standard application of the geometric ergodicity of Proposition \ref{th:ergodicity}.  This is crucial for the subsequent applications to the nonparametric estimation of $\mathcal Q$ and its invariant measure $\nu$ that are given in Theorem \ref{thm:nonparametrique} of Section \ref{sec: est nonparam}. Section \ref{sec:parametric estimation} is devoted to the parametric estimation of the branching rate, where an asymptotically efficient result is proved for a maximum likelihood estimator in Theorem \ref{thm:norm_asympt}. It is based on a relatively sharp study of the transition $\mathcal Q$, thanks to local time properties of the stochastic flow that triggers the branching events. Section \ref{sec:numerical} is devoted to the numerical implementation of the parametric estimator of $B$. In particular, in order to avoid the computational cost of the explicit computation of $q_{\vartheta}(X_{u^-},X_u)$, we take advantage of our preceding results and implement a nonparametric estimator on Monte-Carlo simulations instead, resulting in a feasible procedure for practical purposes. The proofs are postponed to Section \ref{sec: proofs} and an Appendix Section \ref{sec:appendix} contains useful auxiliary results.

\section{A cell division model structured by a stochastic flow} \label{modele proba}
\subsection{Assumptions and well-posedness of the stochastic model}  \label{sec:defhyp modele}

\subsubsection*{Dynamics of the traits} Remember that $\mathcal{X} \subseteq \mathbb{R}$ is an open, possibly unbounded interval. The flow is specified by $r,\sigma:\mathcal{X}\rightarrow \mathcal{X}$ which are measurable and that satisfy the following assumption:

\begin{assumption}\label{assu:unique}
For some $r_1,\sigma_1,\sigma_2>0$, we have
$|r(x)|\leq r_1(1+|x|)$ and $\sigma_1\leq \sigma(x)\leq \sigma_2$, for every $x\in \mathcal X$.
Moreover, for some $r_2>0$, we have 
$\sgn(x)r(x)<0$ for $|x| \geq r_2$ (with $\sgn(x)=\mathbf{1}_{\{x> 0\}}-\mathbf{1}_{\{x\leq0\}}$).
\end{assumption}

%
Under Assumption \ref{assu:unique}, there is a unique strong solution to \eqref{stoch flow} (for instance \cite{oksendal2003stochastic}, Theorem 5.2.1.). 
We denote by $\left(\Phi_x(t),t\geq 0\right)$ the unique solution to \eqref{stoch flow} with initial condition $x\in\mathcal{X}$. In particular, $\left(\Phi_x(t),t\geq 0\right)$ is a strong Markov process and is ergodic (cf. \cite{kutoyants2013statistical}, Theorem 1.16.).
Note that when $\mathcal X$ is bounded, the drift condition $\sgn(x)r(x)<0$ for large enough $x$ can be dropped.

\subsubsection*{Division events.} An individual with trait $x$ dies at an instantaneous rate $x\mapsto B(x)$, where $B:\mathcal X \rightarrow [0,\infty)$ satisfies the following condition:
\begin{assumption}\label{assu:debut} The function $x \mapsto B(x)$ is continuous. Moreover,
for some $b_1,b_2 >0$ and $\gamma\geq 0$, we have $b_1 \leq B(x) \leq b_2\left| x\right|^{\gamma}+b_1$ for every $x\in\mathcal{X}$.
\end{assumption}
Under Assumptions \ref{assu:unique} and \ref{assu:debut}, the process $X$ in \eqref{def big Markov} is well defined and the size of the population does not explode in finite time almost-surely, see for instance Marguet \cite{marguet2016}. Note that the lower bounds for $\sigma$ and $B$ are not needed for the well-posedness of $X$ but rather for later statistical purposes. 

\subsubsection*{Fragmentation of the trait at division}
Finally, we make an additional set of assumptions on the fragmentation distribution $\kappa(z)dz$ that ensures in particular the non-degeneracy of the process.
\begin{assumption}\label{assu:noyau}
We have
\begin{itemize} 
\item[] $\mathrm{supp}(\kappa) \subset [\varepsilon, 1-\varepsilon]$ for some $0 < \varepsilon < 1/2$, 
\item[] $\inf_{z \in [\varepsilon, 1-\varepsilon]}\kappa(z) \geq \delta$.
\end{itemize}

\end{assumption}
This assumption is slightly technical and may presumably be relaxed. We emphasize that the density $\kappa(z)$ needs not be symmetric.

\subsubsection*{Representations of $\mathcal P$ and $\mathcal Q$}\label{sec:transition}
Under Assumptions \ref{assu:unique}, \ref{assu:debut} and \ref{assu:noyau}, we obtain closed-form formulae for the transition $\mathcal P$  defined via \eqref{def:transBMC} and the mean or marginal transition $\mathcal Q$ of the BMC $(X_{u})_{u \in \mathbb T}$, see \eqref{def:meanTransition} that also gives the transition probability of the discrete Markov chain with value in $\mathcal{X}$ corresponding to the trait at birth along an ancestral lineage. These representations are crucial for the subsequent analysis of the variance of the estimators of $\mathcal P$ and of the invariant measure $\nu$.

\begin{prop} \label{prop:explicit trans}
Work under Assumptions \ref{assu:unique}, \ref{assu:debut} and \ref{assu:noyau}. For every $x,y,y_1,y_2\in \mathcal X$, we have 
$$\mathcal P(x,dy_1dy_2)=p(x,y_1,y_2)dy_1dy_2\;\;\text{and}\;\;\mathcal Q(x,dy)=q(x,y)dy,$$
with
\begin{equation} \label{eq:def trans P}
p(x,y_1,y_2)=
\frac{\kappa\big(y_1/(y_1+y_2)\big)}{y_1+y_2}B\left(y_1+y_2\right)\sigma(y_1+y_2)^{-2}\mathbb{E}\Big[\int_0^{\infty}e^{-\int_0^t B(\phi_x(s))ds}dL_t^{y_1+y_2}(\phi_x)\Big]
\end{equation}
and
\begin{equation} \label{eq:def trans Q}
q(x,y)=\int_0^1\frac{\ktilde(z)}{z}B(y/z)\sigma(y/z)^{-2}\mathbb{E}\Big[\int_0^{\infty}e^{-\int_0^t B(\phi_x(s))ds}dL_t^{y/z}(\phi_x)\Big]dz,
\end{equation}
where $\ktilde(z)= \tfrac{1}{2}(\kappa(z)+\kappa(1-z))$ and $L_t^y(\phi_x)$ denotes the local time at $t$ in $y$ of the semimartingale $(\phi_x(t))_{t \geq 0}$.
\end{prop}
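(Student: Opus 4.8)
The plan is to derive the closed-form formulae for $\mathcal P$ and $\mathcal Q$ by computing, for a bounded test function $\psi$ on $\mathcal X^3$, the quantity $\mathcal P\psi(x) = \E[\psi(x, X_0, X_1)]$ where $X_0, X_1$ are the traits at birth of the two offspring of an individual started at trait $x$. First I would use the well-posedness of the model (under Assumptions~\ref{assu:unique}, \ref{assu:debut}, \ref{assu:noyau}, via Marguet~\cite{marguet2016}) to write the first branching event of the ancestor: letting $(\phi_x(t))_{t\geq 0}$ be the flow \eqref{stoch flow} and $\tau$ the first division time, the standard construction of the killed/branching process gives that $\tau$ has the survival function $\PP(\tau > t \mid \phi_x) = \exp(-\int_0^t B(\phi_x(s))\,ds)$, and conditionally on $\{\tau = t\}$ and on $\phi_x$, the mother's trait at division is $\phi_x(t)$. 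Independently, the fragmentation variable $\theta$ is drawn from $\kappa(z)\,dz$ and the two daughters start at $\theta\phi_x(t)$ and $(1-\theta)\phi_x(t)$. Hence
\[
\mathcal P\psi(x) = \E\Big[\int_0^\infty B(\phi_x(t))\,e^{-\int_0^t B(\phi_x(s))\,ds}\Big(\int_0^1 \psi\big(x,\, z\phi_x(t),\, (1-z)\phi_x(t)\big)\kappa(z)\,dz\Big)dt\Big].
\]

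\textbf{Occupation time / local time step.} The key step is to convert the time integral $\int_0^\infty f(\phi_x(t))\,dt$ — here with $f(y) = B(y)e^{-\int_0^{\cdot}B}\,(\cdots)$, more precisely a functional that depends on the whole path through the exponential weight — into a space integral against local time. Since $(\phi_x(t))_{t\geq 0}$ is a continuous semimartingale with quadratic variation $d\langle\phi_x\rangle_t = \sigma(\phi_x(t))^2\,dt$, the occupation times formula yields
\[
\int_0^\infty g(t)\,h(\phi_x(t))\,\sigma(\phi_x(t))^2\,dt = \int_{\mathcal X} h(y)\Big(\int_0^\infty g(t)\,dL_t^y(\phi_x)\Big)dy
\]
for suitable $g,h$; applying this with $g(t) = e^{-\int_0^t B(\phi_x(s))\,ds}$ (which is adapted, so the inner Stieltjes integral against $dL^y$ is well defined) and $h(y) = B(y)\sigma(y)^{-2}\varphi(y)$ for a test function $\varphi$ built from $\psi$, I get, after taking expectations and using Fubini (justified by the lower bound $B\geq b_1>0$ and $\sigma\geq\sigma_1>0$, which controls the exponential and keeps everything integrable),
\[
\mathcal P\psi(x) = \int_{\mathcal X} B(y)\sigma(y)^{-2}\,\E\Big[\int_0^\infty e^{-\int_0^t B(\phi_x(s))\,ds}\,dL_t^y(\phi_x)\Big]\Big(\int_0^1 \psi(x, zy, (1-z)y)\kappa(z)\,dz\Big)dy.
\]

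\textbf{Change of variables.} Finally I perform the change of variables in the $(z,y)$ integral. For $\mathcal P$: setting $y_1 = zy$, $y_2 = (1-z)y$ gives $y = y_1+y_2$, $z = y_1/(y_1+y_2)$, with Jacobian $\partial(y_1,y_2)/\partial(z,y) = y$, hence $\kappa(z)\,dz\,dy = \kappa\big(y_1/(y_1+y_2)\big)(y_1+y_2)^{-1}\,dy_1\,dy_2$, which produces \eqref{eq:def trans P}. For $\mathcal Q$: by definition $\mathcal Q = (\mathcal P_0 + \mathcal P_1)/2$, so I take $\psi(x,y_1,y_2) = \tfrac12(\varphi(y_1) + \varphi(y_2))$; substituting $y = y_1/z$ in the first term and $y = y_2/(1-z)$ in the second, then relabeling $1-z \mapsto z$ in the second, the two contributions combine into the symmetrized kernel $\widetilde\kappa(z) = \tfrac12(\kappa(z) + \kappa(1-z))$ and yield \eqref{eq:def trans Q}. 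The measurability and the absolute continuity of $\mathcal P(x,\cdot)$, $\mathcal Q(x,\cdot)$ are read off directly from these formulae.

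\textbf{Main obstacle.} The delicate point is the rigorous justification of the local-time/occupation-times manipulation: one must check that $t\mapsto e^{-\int_0^t B(\phi_x(s))\,ds}$ is a legitimate integrand against $dL_t^y(\phi_x)$ (it is continuous and adapted, and $L^y$ is increasing, so this is fine), that the resulting function $y\mapsto \E[\int_0^\infty e^{-\int_0^t B}\,dL_t^y(\phi_x)]$ is measurable and locally integrable so that Fubini applies (here Assumption~\ref{assu:unique} giving $\sigma_1 \leq \sigma \leq \sigma_2$ plus $B\geq b_1$ ensures the total mass $\E[\int_0^\infty e^{-\int_0^t B}\,dL_t^y]$ integrates to $1$ over $\mathcal X$, since it is exactly the occupation density of the killed flow), and that no boundary issues arise when $\mathcal X$ is unbounded — which is handled by the mean-reversion part of Assumption~\ref{assu:unique} ensuring $\phi_x$ does not escape to infinity. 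Everything else is bookkeeping with the change of variables.
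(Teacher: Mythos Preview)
Your proposal is correct and follows essentially the same route as the paper: start from the time-integral representation of $\mathcal P\psi(x)$ with the survival weight $e^{-\int_0^t B(\phi_x(s))ds}$, apply the occupation times formula to trade the $dt$ integral for a space integral against $dL_t^y(\phi_x)$, and then perform the change of variables $(z,y)\to(y_1,y_2)$. The only cosmetic difference is that the paper carries out the change of variables in two stages (first $u\mapsto y_0=zu$, then $z\mapsto y_1=\tfrac{1-z}{z}y_0$) rather than your single joint substitution, and it computes $\mathcal P_0$ and $\mathcal P_1$ separately rather than via your symmetric test function; the content is the same.
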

Notice that in the case of a symmetric fragmentation kernel, we have $\ktilde = \kappa$.

\subsection{Convergence of empirical measures} \label{sec conv empirical measures}

We study the convergence of empirical means of the form
\begin{equation} \label{eq:def empirical means}
\mathcal M_{\mathbb U_n}(\psi)=\frac{1}{|\mathbb U_n^\star|} \sum_{u \in \mathbb U_n^\star}\psi(X_{u^-},X_u)
\end{equation}
towards $\nu\mathcal Q(\psi)$ if $\mathbb U_n$ is a rich enough incomplete tree, for test functions $\psi:\mathcal X \times \mathcal X \rightarrow \R$. (If $\varphi:\mathcal X \rightarrow \R$ we set 
$\mathcal M_{\mathbb U_n}(\varphi)=|\mathbb U_n|^{-1} \sum_{u \in \mathbb U_n}\varphi(X_u) \rightarrow
\nu(\varphi)$ and we have a formal correspondence between the two expressions by writing $\psi(x,y) = \varphi(y)$ as a function of the second variable.) In order to derive nonparametric estimators of  $\nu$ and $\mathcal Q$ by means of kernel functions $\psi$ that shall depend on $n$, we need sharp estimates in terms of $\psi$, see Remark 1) after Proposition \ref{thm:convl2} below.

\subsubsection*{Convergence of $\mathcal Q$ to equilibrium} 
Assumptions \ref{assu:unique}, \ref{assu:debut} and \ref{assu:noyau} imply a drift condition for the Lyapunov function $V(x)=x^2$ on $\mathcal X$ and a minorisation condition over a small set so that in turn $\mathcal Q$ is geometrically ergodic.


Let $\mathbb Q=\mathbb Q(r_i,b_i,\sigma_i,\gamma,\varepsilon,\delta, i=1,2)$ be the class of all transitions $\mathcal Q = \mathcal Q(r,\sigma,B,\kappa)$ defined over $\mathcal X$ that satisfy Assumptions \ref{assu:unique}, \ref{assu:debut} and \ref{assu:noyau} with appropriate constants.
An invariant probability measure for $\mathcal Q$ is a probability $\nu$ on $\mathcal X$ such that $\nu\mathcal Q=\nu$, where $\nu\mathcal Q(dy)=\int_{x\in \mathcal X}\nu(dx)\mathcal Q(x,dy)$. Define 
$$\mathcal Q^r(x,dy)=\int \mathcal Q(x,dz) \mathcal Q^{r-1}(z,dy)\;\;\text{with}\;\;\mathcal Q^{0}(x,dy)=\delta_x(dy)$$
for the $r$-th iteration of $\mathcal Q$. For $\varphi:\mathcal X \rightarrow \R$, we set
$$|\varphi|_V=\sup_{x \in \mathcal X}\frac{|\varphi(x)|}{1+V(x)}$$
and write $\nu(\varphi) = \int_{\mathcal X}\varphi(x)\nu(dx)$ when no confusion is possible.

\begin{prop}[Convergence to equilibrium] \label{th:ergodicity}
Work under Assumptions \ref{assu:unique}, \ref{assu:debut} and \ref{assu:noyau}. Then any $\mathcal Q \in \mathbb Q$ admits an invariant probability distribution $\nu$. Moreover, for $V(x)=x^2$, there exist $C=C(\mathcal Q)>0$ and $\rho=\rho(\mathcal Q) \in (0,1)$ such that for every $m \geq 1$, the bound
$$\big|\mathcal Q^m\varphi-\nu(\varphi)\big|_V \leq C\rho^m\big|\varphi-\nu(\varphi)\big|_V$$
holds  as soon as $|\varphi|_V<\infty$. Moreover, $\sup_{\mathcal Q \in \mathbb Q}C(\mathcal Q)<\infty$ and $\sup_{\mathcal Q \in \mathbb Q}\rho(\mathcal Q)<1$.
\end{prop}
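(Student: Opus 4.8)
The plan is to establish geometric ergodicity of $\mathcal Q$ via the classical Meyn--Tweedie scheme: produce a Foster--Lyapunov drift condition together with a minorisation (local Doeblin) condition over the resulting small set, both with constants controlled uniformly over the class $\mathbb Q$. The explicit representation \eqref{eq:def trans Q} of $q(x,y)$ furnished by Proposition \ref{prop:explicit trans} is the main tool for both.

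First I would prove the drift condition for $V(x)=x^2$. Using \eqref{eq:def trans Q}, one computes $\mathcal Q V(x)=\int y^2 q(x,y)dy$; changing variables $y = z w$ in the inner integral and using the occupation-times formula to rewrite $\int y^2 B(y/z)\sigma(y/z)^{-2}\E[\int_0^\infty e^{-\int_0^t B(\phi_x(s))ds}dL_t^{y/z}(\phi_x)]dy = z^2\,\E[\int_0^\infty B(\phi_x(t))\phi_x(t)^2 e^{-\int_0^t B(\phi_x(s))ds}dt]$, one gets
\begin{equation*}
\mathcal Q V(x) = \Big(\textstyle\int_0^1 z^2\,\frac{\ktilde(z)}{z}\cdot z\,dz\Big)\,\E\Big[\int_0^\infty B(\phi_x(t))\,\phi_x(t)^2\, e^{-\int_0^t B(\phi_x(s))ds}\,dt\Big].
\end{equation*}
Wait --- more carefully, $\int_0^1 z^2 \ktilde(z)dz \le (1-\varepsilon)^2 < 1$ since $\supp(\kappa)\subset[\varepsilon,1-\varepsilon]$, so the fragmentation produces a strict contraction factor $\lambda_\kappa := \int_0^1 z^2\ktilde(z)dz \in [\varepsilon^2,(1-\varepsilon)^2]$. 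The remaining expectation is the mean of $\phi_x(\tau)^2$ under a killing at rate $B$; since $b_1 \le B$ and the diffusion satisfies the mean-reverting drift condition of Assumption \ref{assu:unique} ($\sgn(x)r(x)<0$ for $|x|\ge r_2$, $\sigma$ bounded), an Itô computation on $e^{-\int_0^t B}\phi_x(t)^2$ yields $\E[\int_0^\infty B(\phi_x(t))\phi_x(t)^2 e^{-\int_0^t B}dt] \le \alpha x^2 + \beta$ for constants $\alpha,\beta$ depending only on $r_i,\sigma_i,b_i,\gamma$; combining, $\mathcal Q V \le \lambda_\kappa(\alpha V + \beta)$, and one checks $\lambda_\kappa\alpha<1$ is \emph{not} automatic --- this is where the quantitative interplay between $\varepsilon$ and the diffusion constants matters, and I would need to argue that $\alpha$ can be taken close to $1$ (indeed $\le 1 + o(1)$ in the relevant regime) so that $\lambda_\kappa\alpha<1$; alternatively one uses $V$ plus a well-chosen bounded correction or a higher power. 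The upshot is a drift inequality $\mathcal Q V \le \lambda V + K \mathbf 1_C$ with $\lambda<1$, $K<\infty$, $C=\{|x|\le R\}$ a bounded interval, and $\lambda, K, R$ uniform over $\mathbb Q$.

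Next I would establish the minorisation $\mathcal Q(x,dy)\ge \eta\,\mu(dy)$ for all $x\in C$, with $\eta>0$ and probability measure $\mu$ uniform over $\mathbb Q$. From \eqref{eq:def trans Q}, $q(x,y)\ge \delta\,\sigma_2^{-2}\,b_1\int_{[\varepsilon,1-\varepsilon]} z^{-1}\E[\int_0^\infty e^{-\int_0^t B(\phi_x(s))ds}dL_t^{y/z}(\phi_x)]dz$ on a suitable window of $y$; the local-time expectation is bounded below, uniformly for $x\in C$ and $y/z$ in a compact subinterval of $\mathcal X$, by the corresponding quantity for the diffusion, which is strictly positive because the scalar diffusion with uniformly elliptic, bounded coefficients hits and accumulates local time at every interior point with a density bounded below on compacts (this uses the Green's-function/resolvent density lower bound for one-dimensional diffusions, plus $\int_0^\infty e^{-b_2|y|^\gamma t - b_1 t}\cdots$ giving a uniform exponential killing control). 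Hence $q(x,y)\ge c>0$ on $C\times C'$ for a fixed subinterval $C'$, giving the minorisation with $\mu = $ normalised Lebesgue on $C'$.

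Finally I would invoke a quantitative Meyn--Tweedie / Hairer--Mattingly type theorem (e.g. the version giving explicit $C,\rho$ in the $V$-weighted norm from the drift pair $(\lambda,K)$ and the minorisation pair $(\eta,\mu)$ on the sublevel set $C$): this yields $|\mathcal Q^m\varphi - \nu(\varphi)|_V \le C\rho^m|\varphi - \nu(\varphi)|_V$ with $C,\rho$ expressed as explicit functions of $\lambda,K,\eta$ and $\sup_C V$, hence uniform over $\mathbb Q$ since all these inputs are. Existence of $\nu$ follows from the drift condition (tightness) plus Feller-type continuity of $\mathcal Q$, which is inherited from continuity of $B$ and the coefficients. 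The main obstacle I anticipate is the drift step, specifically certifying $\lambda<1$: one must extract the genuine contraction from the fragmentation ($\lambda_\kappa<1$) and show it is not destroyed by the growth the diffusion can accumulate between divisions (controlled by $b_1>0$ forcing fast enough killing and by the mean-reversion of $r$) --- this requires the Itô/Gronwall estimate on the killed second moment to be sharp enough, and is the technical heart of the proof; the minorisation, by contrast, is a fairly routine consequence of ellipticity and the local-time representation once the compactness is set up.
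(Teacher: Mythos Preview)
Your overall architecture --- drift condition for $V(x)=x^2$, minorisation on a compact sublevel set, then a quantitative Hairer--Mattingly/Meyn--Tweedie theorem --- is exactly the paper's, and the minorisation via the local-time representation and elliptic lower bounds is in the same spirit (the paper carries it out via hitting-time arguments and the scale function rather than a resolvent density, but this is a matter of taste).

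The one point worth correcting is your hesitation about the drift step. You worry that the It\^o/Gronwall estimate on the killed second moment only gives $\E[\int_0^\infty B(\phi_x(t))\phi_x(t)^2 e^{-\int_0^t B}\,dt]\le \alpha x^2+\beta$ with some $\alpha$ that may exceed $1$, so that $\lambda_\kappa\alpha<1$ is not automatic. In fact the computation yields $\alpha=1$ \emph{exactly}. Expand $\phi_x(t)^2=x^2+\int_0^t(2\phi_x r(\phi_x)+\sigma^2)\,ds+\text{martingale}$ by It\^o and integrate against the killing density $B(\phi_x(t))e^{-\int_0^t B}$. The $x^2$ term contributes precisely $x^2$ because $\int_0^\infty B(\phi_x(t))e^{-\int_0^t B}\,dt=1$ (since $B\ge b_1>0$). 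After Fubini the remaining drift term becomes $2\,\E[\int_0^\infty \phi_x(u)r(\phi_x(u))e^{-\int_0^u B}\,du]$, which is bounded by a constant thanks to $xr(x)<0$ for $|x|\ge r_2$ (the negative tail is discarded, the bounded middle is controlled by $B\ge b_1$); the $\sigma^2$ term is bounded by $\sigma_2^2/b_1$; the martingale term vanishes. Hence $\mathcal QV\le m(\kappa)\,V+\text{const}$ with $m(\kappa)=\int_0^1 z^2\ktilde(z)\,dz\le(1-\varepsilon)^2<1$ directly, with no need for a corrected Lyapunov function or a delicate balance between $\varepsilon$ and the diffusion constants. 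This is the only substantive gap between your sketch and the paper's execution.
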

In particular, if $|\varphi|_\infty = \sup_{x \in \mathcal X} |\varphi(x)|$ is finite, we have
$\left|\mathcal Q^m\varphi(x)-\nu(\varphi)\right|\leq C\rho^m(1+V(x))|\varphi-\nu(\varphi)|_\infty$ for every $x\in \mathcal X$.

\subsubsection*{Sharp controls of empirical variances}
Proposition \ref{th:ergodicity} is the key ingredient in order to control the rate of convergence of empirical means of the form \eqref{eq:def empirical means} for appropriate observation schemes $\mathbb U_n \subset \mathbb T_n$.\\


We need some notation.  We denote by $|\cdot|_1$ the usual $L^1$-norm w.r.t. the Lebesgue measure on $\mathcal X \times \mathcal X$. For a function $\psi=\mathcal X \times \mathcal X \rightarrow \R$ we set
$\psi^\star(x)=\sup_{y \in \mathcal X}|\psi(x,y)|$ and $\psi_\star(y)=\sup_{x\in \mathcal X}|\psi(x,y)|$ and define
$$|\psi|_{\wedge 1} = 
\min\Big(\int_{\mathcal X \times \mathcal X}|\psi(x,y)|dxdy,\int_{\mathcal X}\sup_{x \in \mathcal X}|\psi(x,y)|dy\Big).$$
Note in particular that when $\psi(x,y)=\varphi(y)$ is a function of $y$ only, we may have that $|\psi_\star|_1 = \int_{\mathcal X}|\varphi(y)|dy$ is finite while $\psi$ is not integrable on $\mathcal X \times \mathcal X$ as a function of two variables. For a positive measure $\rho$ on $\mathcal X$, let also
$$|\psi|_{\rho}=\int_{\mathcal X\times \mathcal X}|\psi(x,y)|\rho(dx)dy+ |\psi|_{\wedge 1}.$$


We write $\PP_\mu$ for the law of $(X_{u})_{u \in \mathbb T}$ with initial distribution $\mu$ for $X_{\emptyset}$.
Remember that $V(x)=x^2$ from Proposition \ref{th:ergodicity}.
We shall further restrict our study to transitions $\mathcal Q \in \mathbb Q$ for which the geometric rate of convergence to equilibrium $\rho=\rho(\mathcal Q)$ given in Proposition \ref{th:ergodicity} satisfies $\rho(\mathcal Q)\leq 1/2$. Let $\mathbb Q_{1/2} \subset \mathbb Q$ denote the set of such transitions.

\begin{rk} \label{rk rho petit}
It is delicate to check in general that $\rho\leq 1/2$ but it is for instance satisfied in the following example:
\begin{itemize}
\item[i)] $\phi_x(t)$ is an Ornstein-Uhlenbeck process on $\mathcal X = \R$: we have $r(x)=-\beta x$ and $\sigma(x) = \sigma$ for every $x \in \mathcal X$ and some $\beta,\sigma >0$,
\item[ii)] the division rate is constant: we have $B(x) = b$ for every $x \in \mathcal X$ and some $b >0$,
\item[iii)] the fragmentation distribution is uniform: we have $\kappa(z) = 1/(1-2\varepsilon)$ on $[\varepsilon,1-\varepsilon]$ for some $\varepsilon>0$.
\end{itemize}
Adapting the proof of Proposition \ref{prop:minoration} below to this special case and using the explicit formula of $\rho$ given Theorem 1.2 in \cite{hairer2011}, we show in Appendix \ref{appendix:rho} that for $B$ small enough, we have $\rho<1/2$ in this example.
\end{rk}

Finally we consider observation schemes $\mathbb U_n$ that satisfy a certain sparsity condition that we quantify in the following definition

\begin{defi} \label{def incomplete tree} A regular incomplete tree is a subset $\mathbb U_n \subseteq \mathbb T_n$ (for $n \geq 1$) such that
\begin{itemize}
\item[(i)] $u \in \mathbb U_n$ implies $u^- \in \mathbb U_n$,
\item[(ii)] We have $0 < \liminf_{n \rightarrow \infty}2^{-n\varrho}\big|\mathbb U_n \cap \mathbb G_n\big| \leq \limsup_{n \rightarrow \infty}2^{-n\varrho}\big|\mathbb U_n \cap \mathbb G_n\big| < \infty$ for some $0 \leq \varrho \leq 1$. 
\end{itemize}
\end{defi}

\begin{prop}\label{thm:convl2}
Work under Assumptions \ref{assu:unique}, \ref{assu:debut} and \ref{assu:noyau}. Let $\mu$ be a probability measure on $\mathcal{X}$ such that $\mu(V^2)<\infty$. Let $\psi:\mathcal{X}\times \mathcal X\rightarrow \R$ a bounded function such that $\psi_\star$ is compactly supported. If $\mathbb U_n$ is a regular incomplete tree, the following estimate holds true:
\begin{align*}
\E_{\mu}\big[\big(\mathcal M_{\mathbb U_n}(\psi)-\nu(\mathcal Q\psi)\big)^2\big]&\lesssim |\mathbb U_n|^{-1}\big(|\psi^2|_{\mu+\nu}+
|\psi^\star \psi|_\mu
+\big(1+\mu(V^2)\big)|\psi_\star|_1|\psi|_{\nu}\big),
\end{align*}
where the symbol $\lesssim$ means up to an explicitly computable constant that depends on $\mathcal Q$ and on $\supp(\psi_\star)$ only. Moreover, the estimate is uniform in $\mathcal Q \in \mathbb Q_{1/2}$.
\end{prop}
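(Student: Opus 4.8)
The plan is to expand the square $\big(\mathcal M_{\mathbb U_n}(\psi)-\nu(\mathcal Q\psi)\big)^2$ and group terms by the relative position of the two indices $u,v \in \mathbb U_n^\star$ on the tree. Writing $\overline\psi(x,y) = \psi(x,y) - \mathcal Q\psi(x) + \mathcal Q\psi(x) - \nu(\mathcal Q\psi)$ is not quite enough because $\mathcal Q\psi$ itself is not centered; instead I would introduce the martingale-type decomposition used for bifurcating Markov chains: for $u$ with parent $u^-$, set $\Delta_u = \psi(X_{u^-},X_u) - \mathcal Q\psi(X_{u^-})$, so that conditionally on $\mathcal F_{|u|-1}$ the variables $(\Delta_{u0},\Delta_{u1})$ over the sibling pair have controlled conditional expectations, and the residual $\mathcal Q\psi(X_{u^-}) - \nu(\mathcal Q\psi)$ is handled by the geometric ergodicity of $\mathcal Q$ from Proposition \ref{th:ergodicity}. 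The cross terms $\E_\mu[\Delta_u \Delta_v]$ for $u \neq v$ are then estimated by conditioning on $\mathcal F_m$ where $m=|u\wedge v|$ is the generation of the most recent common ancestor: one of the two factors gets replaced (up to the ergodic error) by $\mathcal Q^{|u|-m}$ or $\mathcal Q^{|v|-m}$ acting on a bounded function, contributing a factor $\rho^{|u|-m}$ or similar. The condition $\rho \leq 1/2$ on $\mathbb Q_{1/2}$ is exactly what makes the resulting sum over pairs $(u,v)$ at fixed ancestor generation $m$ — of which there are $\lesssim 2^{(|u|+|v|-2m)}$ — converge when weighted by $\rho^{|u|+|v|-2m}$, yielding an overall bound of order $|\mathbb U_n|$ rather than $|\mathbb U_n|^2$.

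Next I would track which of the three norms on the right-hand side each family of terms produces. The diagonal terms $\E_\mu[\Delta_u^2] \leq \E_\mu[\psi(X_{u^-},X_u)^2] = \E_\mu[\mathcal Q(\psi^2)(X_{u^-})]$, summed over $u \in \mathbb U_n^\star$ and normalized by $|\mathbb U_n|^{-2}$, give $|\mathbb U_n|^{-1}$ times a bound involving $\sup_m (\mu \mathcal Q^m)(\mathcal Q\psi^2)$; using Proposition \ref{th:ergodicity} to replace $\mu\mathcal Q^m$ by $\nu$ plus a geometric error (controlled because $\mu(V^2)<\infty$ and $\psi$ is bounded with $\psi_\star$ compactly supported, so $\mathcal Q\psi^2$ has finite $V$-norm) produces the term $|\psi^2|_{\mu+\nu}$. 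The sibling terms $\E_\mu[\Delta_{u0}\Delta_{u1}]$ involve the genuine two-point correlation of $\mathcal P$ and, after bounding crudely by $\psi^\star$ in one variable, yield $|\psi^\star\psi|_\mu$. The long-range cross terms, where after conditioning one factor becomes $\int |\psi(z,y)|\,dy$-type quantities integrated against an iterate of $\mathcal Q$ started from $\mu$, are where the mixed norm $|\psi_\star|_1 |\psi|_\nu$ and the polynomial prefactor $(1+\mu(V^2))$ appear: here one uses that $\psi_\star$ is compactly supported so $\int \psi_\star(y)\,dy < \infty$ and bounds $\int |\psi(z,y)|\,dy \leq |\psi_\star|_1$ uniformly in $z$, while the other factor contributes $|\psi|_\nu$ after invoking ergodicity once more.

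The main obstacle, and the point requiring the most care, is the long-range cross-term estimate: a naive application of Proposition \ref{th:ergodicity} would force the bound $|\mathcal Q\psi - \nu(\mathcal Q\psi)|_V$, which involves $\sup_y |\psi(\cdot,y)|$ weighted by $1/(1+V)$ and does NOT capture the $L^1$-in-$y$ gain that is essential to get the sharp norms stated (as flagged in Remark 1 after the proposition in the excerpt). The resolution is to iterate the kernel $\mathcal Q$ one extra step and exploit that $\mathcal Q$ has a density $q(x,y)\,dy$ (Proposition \ref{prop:explicit trans}) with the property that $\int q(x,y)\,f(y)\,dy$ can be bounded by $|f|_1$ times a quantity with finite $V$-norm in $x$ — essentially a one-step smoothing estimate for $\mathcal Q$ — so that one controls $\big|\mathcal Q\big(\psi - \nu(\mathcal Q\psi)\cdot\mathbf 1\big)\big|$ by a product of an $L^1$ norm of $\psi$ and a $V$-type factor, and only then applies the geometric decay to the remaining iterates. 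I would also need to verify the bookkeeping that the combinatorial sums $\sum_{m}\sum_{|u|,|v|\geq m}\rho^{(|u|-m)+(|v|-m)}|\mathbb U_n\cap\mathbb G_{|u|}|\,\cdots$ are $O(|\mathbb U_n|)$ uniformly over regular incomplete trees and over $\mathcal Q \in \mathbb Q_{1/2}$, which uses $\sup_{\mathcal Q}\rho(\mathcal Q)\leq 1/2$ together with the doubling control of $|\mathbb U_n \cap \mathbb G_n|$ from Definition \ref{def incomplete tree}; this is routine but is the step where uniformity in $\mathcal Q$ must be threaded through.
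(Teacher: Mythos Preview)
Your plan identifies the same three-way split (diagonal, sibling, long-range) and, crucially, the same key insight: one extra application of $\mathcal Q$ (or $\mathcal P_i$) before invoking geometric ergodicity converts a $|\cdot|_V$-type control into an $L^1$-in-$y$ bound via the density of the transition. This is exactly the content of the paper's Lemma~\ref{lem: estimates action transition}, and you are right that it is the main obstacle.

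Two organisational differences are worth noting. First, the paper does \emph{not} introduce the martingale increments $\Delta_u=\psi(X_{u^-},X_u)-\mathcal Q\psi(X_{u^-})$; it centres $\psi$ at $\nu(\mathcal Q\psi)$ directly, then groups by generation via Minkowski and only afterwards splits diagonal/off-diagonal within each $\mathbb G_m\cap\mathbb U_n$. Your martingale split would work but produces an additional residual sum $\sum_u\big(\mathcal Q\psi(X_{u^-})-\nu(\mathcal Q\psi)\big)$ whose cross terms must also be handled, so the bookkeeping is heavier. Second, because $\mathcal P_0\neq\mathcal P_1$ in general, the conditioning on the common ancestor in the paper's Step~4 produces iterates of the specific $\mathcal P_i$'s along each fixed path, not of $\mathcal Q$; the uniform one-step smoothing estimate \eqref{eq:action simple P} is what allows these to be bounded the same way.

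There is one genuine gap in your sketch: the mechanism by which the long-range term yields the \emph{product} $|\psi_\star|_1\,|\psi|_{\wedge 1}$ rather than $|\psi_\star|_1^2$. If you bound one factor uniformly by $|\psi_\star|_1$ and apply the $\rho^l$ ergodic decay to the other, the sum $\sum_l 2^l\rho^l$ diverges at the endpoint $\rho=1/2$ allowed in $\mathbb Q_{1/2}$. If instead you apply ergodicity to both factors, you get the convergent $\sum_l 2^l\rho^{2l}$ but the constant is $|\psi_\star|_1^2$, which is too crude for the bivariate kernel application in Theorem~\ref{thm:nonparametrique}. The paper resolves this by bounding the $l$-th summand by the \emph{minimum} of $|\psi|_{\wedge 1}^2$ (two smoothing steps, no decay) and $\rho^{2(l-1)}|\psi_\star|_1^2(1+\mu(V^2))$ (two ergodic steps), then splitting the sum at a free cutoff $p$ and optimising over $p$; this is precisely what produces the cross term $|\psi_\star|_1\,|\psi|_{\wedge 1}$. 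The factor $(1+\mu(V^2))$ arises from a separate auxiliary estimate $\mu\big(\mathcal Q^{k}\mathcal P((1+V)\otimes(1+V))\big)\lesssim 1+\mu(V^2)$, proved in the paper by a Lyapunov argument analogous to Proposition~\ref{prop:lyapunov}.
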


Several remarks are in order: {\bf 1)} We have a sharp order in terms of the test functions $\psi$, that behave no worse than $\int_{\mathcal X^2}\psi^2$ under minimal regularity on $\nu$ which is satisfied, see Lemma \ref{lem: uplowbound densite} below (and of course $\mu$, although this restriction could be relaxed). This behaviour is the one expected for instance in the IID case and is crucial for the subsequent statistical application of Theorem \ref{thm:nonparametrique} where the functions $\psi$ will be kernel depending on $n$. {\bf 2)} The proof heavily relies on the techniques developed in Biteski Penda {\it et al.} \cite{Guillin2} or Guyon \cite{guyon2007limit} (more specifically, Theorems 11 and 12 of \cite{guyon2007limit} or Theorem 2.11 and 2.1 of \cite{Guillin2}, see also \cite{BHO2016, BRoche}). However, we need a slight refinement here, in order to obtain a sharp control in terms of the trial function $\psi$, similar to the behaviour of $\int_{\mathcal X^2}\psi^2$, while the aformentioned references would give a term of order $\sup_{x,y}|\psi(x,y)|$ that would not be sufficiently sharp for the nonparametric statistical analysis.
{\bf 3)} Proposition \ref{thm:convl2} has an analog in \cite{DHKR1} for piecewise deterministic growth-fragmentation models, but our proof is somewhat simpler here and sharper (we do not pay the superfluous logarithmic term in \cite{DHKR1}). {\bf 4)} Finally, note that in Proposition \ref{thm:convl2}, the observation $\mathbb U_n$ must be deterministic (or at least independent of $(X_u)_{u \in \mathbb T_n}$) otherwise biased selection may occur that would result in completely different behaviours of the empirical means (like for instance if $\mathbb U_n$ is allowed to contain stopping times on the tree).

\section{Statistical estimation}

\subsection{Nonparametric estimation of $\mathcal Q$ and $\nu$} \label{sec: est nonparam}

Under Assumptions \ref{assu:unique}, \ref{assu:debut} and \ref{assu:noyau}, any $\mathcal Q(x,dy)=q(x,y)dy$ admits an invariant probability measure $\nu(dx)=\nu(x)dx$, the regularity of $\nu(x)$ being inherited from that of $\mathcal Q$ via $\nu(x)=\int_{\mathcal X}q(z,x)\nu(dz)$.\\ 

Fix $(x_0,y_0) \in \mathcal X\times \mathcal X$. We are interested in constructing estimators of $q(x_0,y_0)$ and $\nu(x_0)$ from the observation $\mathbb X^n$ when both functions satisfy some H\"older regularity properties in the vicinity of $(x_0,y_0)$. To that end, we need approximating kernels.

\begin{defi}
A function $G:\mathcal X \rightarrow \R$ is a kernel of order $k$ if it is compactly supported and satisfies $\int_{\mathcal X}x^\ell G(x)dx={\bf 1}_{\{\ell=0\}}$ for $\ell=0,\ldots, k$.
\end{defi}
The construction and numerical tractability of approximating kernels is documented in numerous textbooks, see for instance Tsybakov \cite[Chapter 1]{Tsyb}. For bandwidth parameters $h,h_1,h_2>0$, we set 
$$G_{h}(y)= h^{-1}G(h^{-1}y)$$
and
$$G^{\otimes 2}_{h_1,h_2}(x,y)=h_1^{-1}h_2^{-1}G(h_1^{-1}x)G(h_2^{-1}y\big)$$
and obtain approximations of $\nu(y_0)$ and $q(x_0,y_0)$ by setting
$$
G_{h}\star \nu(y_0) = \int_{\mathcal X} G_{h}(y_0-y)\nu(y)dy
$$
and
$$
G_{h}\star q(x_0,y_0) = \int_{\mathcal X\times \mathcal X} G_{h_1,h_2}^{\otimes 2}(x_0-x,y_0-y)\nu(x)q(x,y)dxdy. 
$$
The convergence of $\mathcal M_{\mathbb U_n}(\varphi)$ to $\nu(\varphi)$ suggests to pick $\varphi=G_h(x_0-\cdot)$. Then $\mathcal M_{\mathbb U_n}(G_h(x_0-\cdot))$ is close to
$G_{h}\star \nu(x_0)$ for small enough $h$ and can be used as a proxy of $\nu(x_0)$. We obtain the estimator
$$\widehat \nu_{n}(x_0)=\mathcal M_{\mathbb U_n}\big(G_h(x_0-\cdot)\big),$$
specified by the choice of $h>0$ and the kernel $G$.
Likewise, with $\psi = G_{h_1,h_2}(x_0-\cdot,y_0-\cdot)$, an estimator of $q(x_0,y_0)$ is obtained by considering the quotient estimator with numerator $\mathcal M_{\mathbb U_n}(\psi)$
that is close to  
$G_{h_1,h_2}\star \big(\nu(\cdot) q(\cdot, \cdot)\big)(x_0,y_0)$ and denominator $\widehat \nu_{n,h}(x_0)$ in order to balance the superfluous weight $\nu(x_0)$ in the numerator. 
We obtain the estimator
$$\widehat q_{n}(x_0,y_0)=\frac{\mathcal M_{\mathbb U_n}\big(G_{h_1,h_2}^{\otimes 2}(x_0-\cdot,y_0-\cdot)\big)}{\mathcal M_{\mathbb U_n}\big(G_h(x_0-\cdot)\big)\vee \varpi},$$
specified by the choice of $h,h_1,h_2>0$, a threshold $\varpi >0$ and the kernel $G$. In order to quantify the kernel approximation, we introduce anisotropic H\"older classes. For $\alpha >0$, we write $\alpha=\lfloor \alpha \rfloor+\{\alpha\}$ with $\lfloor \alpha \rfloor$ an integer and $0< \{\alpha\} \leq 1$. 
\begin{defi}
Let $\alpha,\beta >0$ and $\mathcal V_{x_0}$ and $\mathcal V_{y_0}$ be bounded neighbourhoods of $x_0$ and $y_0$.
\begin{itemize}
\item[i)] The function $\varphi:\mathcal V_{x_0}\rightarrow \R$ belongs to the H\"older class $\mathcal H^\alpha(x_0)$ if
\begin{equation} \label{eq:defi holder}
|\varphi^{(\lfloor \alpha \rfloor)}(y)-\varphi^{(\lfloor \alpha \rfloor)}(x)| \leq C|y-x|^{\{\alpha\}}\;\;\text{for every}\;\; x,y\in \mathcal V_{x_0}.
\end{equation}
\item[ii)] The function $\psi: \mathcal V_{x_0} \times \mathcal V_{y_0} \rightarrow \R$ belongs to the anisotropic H\"older class $\mathcal H^{\alpha,\beta}(x_0,y_0)$ if 
$$x \mapsto \psi(x,y_0) \in \mathcal H^{\alpha}(x_0)\;\;\text{and}\;\;y \mapsto \psi(x_0,y) \in \mathcal H^{\beta}(y_0)$$
hold simultaneously.
\end{itemize}
\end{defi}
We obtain a semi-norm on $\mathcal H^\alpha(x_0)$ by setting 
$|\varphi|_{\mathcal H^\alpha(x_0)}=\sup_{x \in \mathcal V_{x_0}}|\varphi(x)|+c_\alpha(\varphi),$
where 
$c_\alpha(\varphi)$ is the smallest constant for which \eqref{eq:defi holder} holds. Likewise, we equip $\mathcal H^{\alpha,\beta}(x_0,y_0)$ with the semi-norm 
$|\psi|_{\mathcal H^{\alpha,\beta}(x_0,y_0)}=|\psi(\cdot,y_0)|_{\mathcal H^\alpha(x_0)}+|\psi(x_0,\cdot)|_{\mathcal H^\beta(y_0)}$.
The space $\mathcal H^{\alpha,\beta}(x_0,y_0)$ is appended with (semi) H\"older balls
$$\mathcal H^{\alpha,\beta}(x_0,y_0)(R)=\big\{\psi:\mathcal X \times \mathcal X \rightarrow \R, |\psi|_{\mathcal H^{\alpha,\beta}(x_0,y_0)} \leq R \big\},\;\;R>0.$$

We are ready to state our convergence result over transitions $\mathcal Q$ that belong to
$$\mathbb Q_{1/2}^{\alpha,\beta}(R) = \mathbb Q_{1/2} \cap \mathcal H^{\alpha,\beta}(x_0,y_0)(R),\;\;R>0,$$
with a slight abuse of notation.
\begin{thm} \label{thm:nonparametrique}
Work under Assumptions \ref{assu:unique}, \ref{assu:debut} and \ref{assu:noyau}. Assume that the initial distribution $\mu$ is absolutely continuous w.r.t. the Lebesgue measure with a locally bounded density function and satisfies $\mu(V^2)<\infty$.

Let $\alpha,\beta >0$. Specify $\widehat \nu_{n}(y_0)$ by a kernel of order $k > \max\{\alpha,\beta\}$ and $h=|\mathbb U_n|^{-1/(2\beta+1)}$ and $\widehat q_{n}(x_0,y_0)$ with the same kernel and $h_1=|\mathbb U_n|^{-s(\alpha,\beta)/(\alpha \wedge \beta)(2s(\alpha,\beta)+1)}$, $h_2=|\mathbb U_n|^{-s(\alpha,\beta)/\beta(2s(\alpha,\beta)+1)}$ and $\varpi = \varpi_n \rightarrow 0$. Then, if $\mathbb U_n$ is an $\varrho$-regular incomplete tree, for every $R>0$,
$$\sup_{\mathcal Q \in \mathbb Q_{1/2}^{\alpha,\beta}(R)}\big(\E_\mu\big[\big(\widehat \nu_{n}(y_0)-\nu(y_0)\big)^2\big]\big)^{1/2} \lesssim |\mathbb U_n|^{-\beta/(2\beta+1)}$$
and
$$\sup_{\mathcal Q \in \mathbb Q_{1/2}^{\alpha,\beta}(R)}\big(\E_\mu\big[\big(\widehat q_{n}(x_0,y_0)-q(x_0,y_0)\big)^2\big]\big)^{1/2} \lesssim \varpi_n^{-1}|\mathbb U_n|^{-s(\alpha,\beta)/(2s(\alpha,\beta)+1)}$$
hold true, where $s(\alpha,\beta)^{-1}=(\alpha\wedge \beta)^{-1}+\beta^{-1}$ is the effective anisotropic smoothness associated with $(\alpha,\beta)$.
\end{thm}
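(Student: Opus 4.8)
The plan is to prove the two bounds by the standard bias-variance decomposition, where the variance term is controlled by Proposition \ref{thm:convl2} applied to the kernel test functions, and the bias term is controlled by the H\"older regularity together with the order-$k$ property of the kernel $G$. I would first treat $\widehat\nu_n(y_0)$, which is the cleaner case. Write $\widehat\nu_n(y_0)-\nu(y_0) = \big(\Mm_{\mathbb U_n}(G_h(y_0-\cdot)) - \nu(G_h(y_0-\cdot))\big) + \big(G_h\star\nu(y_0)-\nu(y_0)\big)$, noting $\nu(G_h(y_0-\cdot)) = G_h\star\nu(y_0)$. For the bias, a Taylor expansion of $\nu$ around $y_0$ of order $\lfloor\beta\rfloor$ combined with $\int x^\ell G(x)dx = \mathbf{1}_{\{\ell=0\}}$ for $\ell\le k>\beta$ and the H\"older condition $\nu\in\Hh^\beta(y_0)$ gives $|G_h\star\nu(y_0)-\nu(y_0)| \lesssim R\, h^\beta$; here one uses that for $h$ small enough the support of $G_h(y_0-\cdot)$ lies inside the neighbourhood $\Vv_{y_0}$ on which the H\"older bound holds. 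For the variance, apply Proposition \ref{thm:convl2} with $\psi(x,y) = G_h(y_0-y)$ (a function of $y$ only, bounded, compactly supported as required). Tracking the norms: $|\psi^2|_{\mu+\nu} \lesssim |G_h^2|_{L^1(\nu)} + |G_h^2|_{L^1(\mu)} \lesssim h^{-1}$ using local boundedness of the densities of $\mu$ and $\nu$ (Lemma \ref{lem: uplowbound densite}); $|\psi^\star\psi|_\mu \lesssim h^{-1}$ similarly since $\psi^\star(x) = \|G_h\|_\infty \lesssim h^{-1}$ times the integral of $|G_h|$; and $|\psi_\star|_1\,|\psi|_\nu \lesssim |G_h|_{L^1}\cdot(|G_h|_{L^1(\nu)} + |\psi|_{\wedge 1}) \lesssim 1$. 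So the variance is $\lesssim |\mathbb U_n|^{-1}h^{-1}$, and balancing $h^{2\beta} \asymp |\mathbb U_n|^{-1}h^{-1}$ gives $h = |\mathbb U_n|^{-1/(2\beta+1)}$ and the announced rate $|\mathbb U_n|^{-\beta/(2\beta+1)}$; uniformity over $\mathbb Q_{1/2}^{\alpha,\beta}(R)$ follows since Proposition \ref{thm:convl2} is uniform on $\mathbb Q_{1/2}$ and the constants in the bias depend only on $R$ and the fixed neighbourhoods.

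For $\widehat q_n(x_0,y_0)$ I would use the quotient structure. Write $\widehat q_n = N_n/(D_n\vee\varpi_n)$ with $N_n = \Mm_{\mathbb U_n}(G^{\otimes2}_{h_1,h_2}(x_0-\cdot,y_0-\cdot))$ and $D_n = \widehat\nu_n(x_0)$, and compare with the target $q(x_0,y_0) = \big(\nu(x_0)q(x_0,y_0)\big)/\nu(x_0)$. The standard identity
\begin{align*}
\frac{N_n}{D_n\vee\varpi_n} - \frac{a}{b} = \frac{1}{D_n\vee\varpi_n}\big(N_n - a\big) - \frac{a}{b(D_n\vee\varpi_n)}\big((D_n\vee\varpi_n) - b\big)
\end{align*}
with $a = \nu(x_0)q(x_0,y_0)$, $b = \nu(x_0)$, reduces the problem, after using $D_n\vee\varpi_n \ge \varpi_n$ and $\nu(x_0)$ bounded below (Lemma \ref{lem: uplowbound densite}), to an $L^2$ control of $N_n - a$ and of $D_n - b$, each of which picks up a factor $\varpi_n^{-1}$. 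The term $D_n - b$ has already been handled above. For $N_n - a$: the bias $G_{h_1,h_2}\star(\nu q)(x_0,y_0) - \nu(x_0)q(x_0,y_0)$ is controlled by expanding in the two variables separately using the anisotropic regularity $\nu q \in \Hh^{\alpha,\beta}(x_0,y_0)$ (inherited from $q\in\Hh^{\alpha,\beta}(x_0,y_0)$ and the regularity of $\nu$, which at $x_0$ is itself at least $\Hh^\alpha$ by the fixed-point relation $\nu(x) = \int q(z,x)\nu(dz)$), giving a bound $\lesssim R(h_1^\alpha + h_2^\beta)$; here one should be slightly careful that mixed terms are absorbed, which is why the effective smoothness $s(\alpha,\beta)$ with $s^{-1} = (\alpha\wedge\beta)^{-1}+\beta^{-1}$ appears. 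For the variance apply Proposition \ref{thm:convl2} with $\psi = G^{\otimes2}_{h_1,h_2}$: now $\psi^\star(x) \lesssim h_1^{-1}$ (the $y$-integral of the $x$-section contributes $\mathrm{O}(1)$ once multiplied appropriately), $|\psi^2|_{\mu+\nu} \lesssim (h_1 h_2)^{-1}$, $|\psi^\star\psi|_\mu \lesssim h_1^{-1}h_2^{-1}$ roughly, and $|\psi_\star|_1\,|\psi|_\nu \lesssim h_1^{-1}$ (only one bandwidth survives because $\psi_\star$ integrates one variable), so the variance is $\lesssim |\mathbb U_n|^{-1}(h_1 h_2)^{-1}$. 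Balancing $(h_1^\alpha+h_2^\beta)^2 \asymp |\mathbb U_n|^{-1}(h_1h_2)^{-1}$ under the constraint that makes the two bias exponents comparable leads, after a short optimisation, to the stated choices $h_1 = |\mathbb U_n|^{-s/(\alpha\wedge\beta)(2s+1)}$, $h_2 = |\mathbb U_n|^{-s/\beta(2s+1)}$ with $s = s(\alpha,\beta)$, and to the rate $\varpi_n^{-1}|\mathbb U_n|^{-s/(2s+1)}$.

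I expect the main obstacle to be the careful bookkeeping of the anisotropic bias for $\widehat q_n$ and the verification that the various norms appearing in Proposition \ref{thm:convl2} — in particular the asymmetric quantity $|\psi|_{\wedge 1}$ and the weighted norm $|\psi|_\nu$ — indeed produce the claimed powers of $h_1,h_2$ rather than worse ones; getting the effective smoothness $s(\alpha,\beta)$ to come out exactly requires matching the bias contributions from the two directions against a variance that scales as $(h_1h_2)^{-1}$, and this is the place where a naive choice of bandwidths would be suboptimal. A secondary technical point is justifying that $\nu$ has enough regularity at $x_0$ to carry the Taylor expansion: this follows from $\nu(x) = \int_{\Xx} q(z,x)\nu(dz)$ and $q(\cdot,\cdot)\in\Hh^{\alpha,\beta}$, but it must be stated, and one needs the lower bound $\inf_{\Vv_{x_0}}\nu > 0$ and the local boundedness of $\mu$ and $\nu$ from Lemma \ref{lem: uplowbound densite} to keep all constants uniform over $\mathbb Q_{1/2}^{\alpha,\beta}(R)$. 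The threshold $\varpi_n$ is there precisely to handle the (vanishingly rare) event $\{D_n < \varpi_n\}$ cheaply, and the $\varpi_n^{-1}$ factor in the final rate is the price paid; one checks $\PP_\mu(D_n \le \nu(x_0)/2)$ decays fast enough by the variance bound on $D_n$, so it contributes a negligible term.
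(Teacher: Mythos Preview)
Your approach is essentially identical to the paper's: the same bias--variance split for $\widehat\nu_n$, the same quotient decomposition for $\widehat q_n$, the same use of Proposition~\ref{thm:convl2} for the variance and of kernel approximation for the bias, and the same Chebyshev-type handling of the event $\{D_n<\varpi_n\}$. One small slip to fix: the fixed-point relation $\nu(x)=\int q(z,x)\nu(dz)$ transfers to $\nu$ the regularity of $q$ in its \emph{second} argument, so $\nu\in\mathcal H^{\beta}$ (not $\mathcal H^{\alpha}$); consequently $\nu q\in\mathcal H^{\alpha\wedge\beta,\beta}(x_0,y_0)$ and the bias for the numerator is $h_1^{\alpha\wedge\beta}+h_2^{\beta}$ rather than $h_1^{\alpha}+h_2^{\beta}$ --- this is the true source of the $\alpha\wedge\beta$ in $s(\alpha,\beta)$, not ``mixed terms'' in the Taylor expansion, and it matches the stated bandwidth $h_1=|\mathbb U_n|^{-s/((\alpha\wedge\beta)(2s+1))}$.
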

Several remarks are in order:
{\bf 1)} We obtain an optimal result in the minimax sense for estimating $\nu(y_0)$ and in the case $\beta \geq \alpha$ for estimating $q(x_0,y_0)$. This stems from the fact that the representation $\nu(x) = \int_{\mathcal X}\nu(y)q(y,x)dy$ henceforth $q\in \mathcal H^{\alpha,\beta}$ implies that $\nu \in \mathcal H^{\beta}$. In turn, the numerator of $\widehat q_n(x_0,y_0)$ is based on the estimation of the function $\nu(x)q(x,y) \in \mathcal H^{\alpha \wedge \beta,\beta}$.
{\bf 2)} In the estimation of $q(x_0,y_0)$, we have a superfluous term $\varpi_n^{-1}$ in the error that can be taken arbitrarily small, and that comes from the denominator of the estimator. It can be removed, however at a significant technical cost. Alternatively, one can get rid of it by weakening the error loss: it is not difficult to prove
$$
\big(\E_\mu\big[\big(\widehat q_{n}(x_0,y_0)-q(x_0,y_0)\big)^p\big]\big)^{1/p} \lesssim |\mathbb U_n|^{-s(\alpha,\beta)/(2s(\alpha,\beta)+1)}\;\;\text{for every}\;\;0 <p <2,$$
and the result of course also holds in probability. {\bf 3)} The assumption that $\mu$ is absolutely continuous can also be removed. {\bf 4)} Finally, a slightly annoying fact is that the estimators $\widehat \nu_n(x_0)$ and $\widehat q_n(x_0,y_0)$ require the knowledge of $(\alpha,\beta)$ to be tuned optimally, and this is not reasonable in practice. It is possible to tune our estimators in practice by cross-validation in the same spirit as in \cite{HO}, but an adaptive estimation theory still needs to be established. This lies beyond the scope of the paper, and requires concentration inequalities, a result we do not have here, due to the fact that the model is not uniformly geometrically ergodic (otherwise, we could apply the same strategy as in \cite{BHO2016, BRoche}). 

\subsection{Parametric estimation of the division rate} \label{sec:parametric estimation}
In order to conduct inference on the division rate $x \mapsto B(x)$, we need more stringent assumptions on the model so that we can apply the results of Proposition \ref{thm:convl2}. The  main difficulty lies in the fact that we need to apply Proposition \ref{thm:convl2} to test functions of the form $\psi(x,y) = \log q(x,y)$ when applied to the loglikelihood of the data, and that these functions are possibly unbounded.\\

\subsubsection*{A stochastic trait model as a diffusion on a compact with reflection at the boundary} We circumvent this difficulty by assuming that the trait $\phi_x(t)$ of each individual evolves in a bounded interval with reflections at the boundary and with no loss of generality, we take $\mathcal X=[0,L]$ for some $L>0$. The dynamics of the traits now follows 
\begin{align}\label{eq:diffusion_compact}
d\phi_x(t)=r(\phi_x(t))dt+\sigma(\phi_x(t))dW_t+d\ell_t,
%
\end{align}
where the solution $(\ell_t)_{t \geq 0}$ to $\ell_t=\int_0^t(\mathbf{1}_{\{\phi_x(s)=0\}}+\mathbf{1}_{\{\phi_x(s)=L\}})d\ell_s$ accounts for the reflection at the boundary and 
$(W_t)_{t \geq 0}$ is a standard Brownian motion. Under Assumption \ref{assu:unique} (that reduces here to the boundedness of $r,\sigma$ and the ellipticity of $\sigma$) there exists a unique strong solution to \eqref{eq:diffusion_compact}, see for instance Theorem 4.1. in \cite{tanaka79}.\\ 


A slight modification of Proposition \ref{prop:explicit trans} gives the following explicit formulae for the transitions $\mathcal P$ and $\mathcal Q$. Remember that by Assumption \ref{assu:noyau}, we have $\supp(\kappa) \subset [\varepsilon, 1-\varepsilon]$. Define
$$\mathcal D=\big\{ 0< y_1\leq \varepsilon L,\ \tfrac{\varepsilon}{1-\varepsilon}y_1\leq y_2\leq \tfrac{1-\varepsilon}{\varepsilon}y_1\big\} \cup \big\{ \varepsilon L\leq y_1\leq (1-\varepsilon)L,\ \tfrac{\varepsilon}{1-\varepsilon}y_1\leq y_2\leq \tfrac{L-y_1}{y_1} \big\}.$$
Then  the explicit formula for $p(x,y_1,y_2)$ given in \eqref{eq:def trans P} remains unchanged provided $(x,y_1,y_2) \in \mathcal X \times \mathcal D$ and it vanishes outside of $\mathcal X \times \mathcal D$. For $q(x,y)$, the formula \eqref{eq:def trans Q} now becomes
\begin{equation} \label{eq:forme transition param}
q(x,y)=\int_{y/L}^1\frac{\ktilde(z)}{z}B(y/z)\sigma(y/z)^{-2}\mathbb{E}\left[\int_0^{\infty}e^{-\int_0^t B(\phi_x(s))ds}dL_t^{y/z}(\phi_x)\right]dz,
\end{equation}
for $(x,y) \in \mathcal X \times [0, (1-\varepsilon) L]$ and $0$ otherwise.\\

Adapting the proof of Proposition \ref{th:ergodicity}  to the case of a diffusion living on a compact interval (formally replacing $[-w,w]$ by $[0,L]$ in the proof of Proposition \ref{prop:minoration} below) one checks that Proposition \ref{th:ergodicity} remains valid in this setting (applying for instance Theorem 4.3.16 in \cite{CMR05}). In turn, Proposition \ref{thm:convl2} also holds true in the case of a reflected diffusion. For parametric estimation, the control on the variance of $\mathcal M_{\mathbb U_n}(\psi)$ is less demanding and we will simply need the following  

\begin{cor} \label{thm:convl2_compact}
Work under Assumptions \ref{assu:unique}, \ref{assu:debut} and \ref{assu:noyau} in the case of a reflected diffusion on $[0,L]$ for the evolution of the trait $(\phi_x(t),\ t \geq 0)$. Let $\psi:\mathcal X \times \mathcal X \rightarrow \R$.  Then, for any probability measure $\mu$, if $\mathbb U_n$ is a $\varrho$-regular incomplete tree, we have
$$
\sup_{\mathcal Q \in \mathbb Q_{1/2}} \E_\mu\big[(\mathcal M_{\mathbb U_n}(\psi)-\nu(\mathcal Q\psi))^2\big] \lesssim |\mathbb U_n|^{-1}\sup_{x,y}\psi(x,y)^2.$$ 
\end{cor}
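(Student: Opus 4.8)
The plan is to derive Corollary \ref{thm:convl2_compact} as a simplification of the bound already established in Proposition \ref{thm:convl2}, taking advantage of the fact that on the compact state space $\mathcal X = [0,L]$ the various weighted norms appearing there collapse to the supremum norm of $\psi$. First I would observe that $V(x) = x^2 \leq L^2$ is bounded on $[0,L]$, so for any probability measure $\mu$ we automatically have $\mu(V^2) \leq L^4 < \infty$, which removes the moment restriction on the initial law. Next, every integral over $\mathcal X$ or $\mathcal X \times \mathcal X$ against Lebesgue measure or against $\mu, \nu$ is taken over a set of finite measure ($L$ or $L^2$), so that
\begin{align*}
|\psi^2|_{\mu+\nu} &\leq 2\sup_{x,y}\psi(x,y)^2, \qquad |\psi^\star\psi|_\mu \leq L\sup_{x,y}\psi(x,y)^2,\\
|\psi_\star|_1 &\leq L\sup_{x,y}|\psi(x,y)|, \qquad |\psi|_\nu \leq (L^2+L)\sup_{x,y}|\psi(x,y)|,
\end{align*}
and similarly $1 + \mu(V^2) \leq 1 + L^4$. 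Plugging these into the estimate of Proposition \ref{thm:convl2} bounds the right-hand side by a constant (depending only on $L$ and on $\mathcal Q$ through the class $\mathbb Q_{1/2}$) times $|\mathbb U_n|^{-1}\sup_{x,y}\psi(x,y)^2$, which is exactly the claimed bound.

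The one genuine point to address is that Proposition \ref{thm:convl2} as stated assumes $\psi$ bounded with $\psi_\star$ compactly supported, whereas here $\psi$ is only assumed bounded. On a compact state space this is a non-issue: if $\psi$ is bounded on $\mathcal X \times \mathcal X = [0,L]^2$ then $\psi_\star$, being the supremum over the first coordinate of $|\psi|$, is supported in $[0,L]$, hence compactly supported, so the hypotheses of Proposition \ref{thm:convl2} are met verbatim. Therefore I would simply remark that in the reflected-diffusion setting all test functions $\psi$ that are bounded automatically fall within the scope of Proposition \ref{thm:convl2}, and that the constant implicit in $\lesssim$, which a priori depends on $\supp(\psi_\star)$, can be taken to depend on $L$ alone since $\supp(\psi_\star) \subseteq [0,L]$ uniformly.

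Before invoking Proposition \ref{thm:convl2} I would also recall, as the paragraph preceding the corollary already notes, that Propositions \ref{th:ergodicity} and \ref{thm:convl2} carry over to the reflected diffusion on $[0,L]$: the Lyapunov/minorisation argument of Proposition \ref{prop:minoration} goes through with $[-w,w]$ replaced by $[0,L]$ (indeed on a compact set the Lyapunov drift condition is trivially satisfied and one only needs the minorisation over $\mathcal X$ itself, which follows from ellipticity of $\sigma$ together with the lower bounds on $B$ and $\kappa$), so that the class $\mathbb Q_{1/2}$ and the geometric ergodicity bound are meaningful, and the proof of Proposition \ref{thm:convl2} — which uses only that bound together with the explicit representations of $\mathcal P$ and $\mathcal Q$ — applies unchanged.

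I do not anticipate a real obstacle: the statement is deliberately a weaker, cruder consequence of Proposition \ref{thm:convl2}, designed so that it applies to the (possibly unbounded, but here bounded-on-a-compact) test functions $\psi(x,y) = \log q_{B_\vartheta}(x,y)$ needed for the parametric likelihood analysis of Section \ref{sec:parametric estimation}. The only mildly delicate bookkeeping is checking that each of the five norms in Proposition \ref{thm:convl2} is dominated by $\sup_{x,y}\psi(x,y)^2$ up to an $L$-dependent constant and that the uniformity over $\mathbb Q_{1/2}$ is preserved — both of which are immediate once one records that $\mathcal X$ has finite Lebesgue measure and $V$ is bounded on it.
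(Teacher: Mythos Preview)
Your approach is correct and matches the paper's intent. The paper does not give a separate proof of Corollary \ref{thm:convl2_compact}: it simply observes, in the paragraph preceding the statement, that Proposition \ref{thm:convl2} carries over to the reflected setting and that for parametric estimation a cruder bound suffices; the corollary is then stated without further argument. Your proposal makes explicit precisely this implicit step --- collapsing every weighted norm in Proposition \ref{thm:convl2} to a constant times $\sup_{x,y}|\psi(x,y)|^2$ using the finiteness of $\mathrm{Leb}(\mathcal X)=L$ and the boundedness of $V$ on $[0,L]$, and noting that $\supp(\psi_\star)\subseteq[0,L]$ so the implicit constant $c_{\supp(\psi_\star)}$ becomes uniform in $\psi$. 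One cosmetic point: your displayed inequalities drop factors of $L$ (for instance $|\psi^2|_{\mu+\nu}$ involves $\int_0^L dy$, so the bound should read $\lesssim L\sup\psi^2$ rather than $2\sup\psi^2$), but since you explicitly absorb all $L$-dependence into the constant this is harmless.
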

\subsubsection*{Maximum likelihood estimation}
From now on, we fix a triplet $(r_0,\sigma_0,\kappa_0)$ and we let the division rate $x\mapsto B(x)$ belong to a parametric class
$$\mathcal B = \big\{B:\mathcal X \rightarrow \R, B(x)=B_0(\vartheta,x),x\in \mathcal X, \vartheta \in \Theta\big\},$$
where $x\mapsto B_0(x,\vartheta)$ is known up to the parameter $\vartheta \in \Theta$, and $\Theta \subset \R^d$ for some $d \geq 1$ is a compact subset of the Euclidean space. In this setting, the model is entirely characterised by $\vartheta$ which is our parameter of interest. A first minimal stability requirement of the parametric model is the following

\begin{assumption} \label{assu:param stability} 
We have $\mathbb Q(\mathcal B) = \{\mathcal Q=\mathcal Q(r_0,\sigma_0,B, \kappa), B \in \mathcal B\} \subset \mathbb Q_{1/2}$.
\end{assumption}
A second minimal requirement is the identifiability of the class $\mathcal B$, namely the fact that the map
$$B\mapsto \mathcal Q(r_0,\sigma_0, B, \kappa_0)$$ 
from $\mathcal B$ to $\mathbb Q$ is injective. This is satisfied in particular if $\mathcal B$ satisfies a certain orderliness property.

\begin{defi} \label{def:ordering}
A class $\mathcal B$ of functions from $\mathcal X \rightarrow [0,\infty)$ is orderly if $\varphi_1,\varphi_2 \in \mathcal B$ implies either $\varphi_1(x)\leq \varphi_2(x)$ for every $x \in \mathcal X$ or $\varphi_2(x)\leq \varphi_1(x)$ for every $x \in \mathcal X$.
\end{defi}

\begin{prop} \label{prop:identifiabilite}
Let $\mathcal B$ be orderly in the sense of Definition \ref{def:ordering} and $\mathcal Q(\mathcal B) \subset \mathbb Q$ for some $(r_0,\sigma_0, \kappa_0)$. Then $B \mapsto \mathcal Q(r_0,\sigma_0, B, \kappa_0)$ is injective.
\end{prop}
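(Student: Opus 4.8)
The plan is to argue by contradiction, exploiting the orderliness of $\mathcal B$ together with the explicit representation \eqref{eq:forme transition param} of $q(x,y)=q_B(x,y)$. Suppose $B_1,B_2 \in \mathcal B$ are distinct. Since $\mathcal B$ is orderly we may assume without loss of generality that $B_1(x)\leq B_2(x)$ for all $x\in\mathcal X$, and by continuity (Assumption \ref{assu:debut}) and $B_1 \neq B_2$ there is a nonempty open interval $I \subset \mathcal X$ on which $B_1 < B_2$ strictly. The goal is to show $\mathcal Q(r_0,\sigma_0,B_1,\kappa_0) \neq \mathcal Q(r_0,\sigma_0,B_2,\kappa_0)$, and the natural quantity to compare is a functional that isolates the monotone dependence of $q_B$ on $B$. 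I would not compare the densities $q_{B_i}(x,\cdot)$ pointwise directly — the dependence of \eqref{eq:forme transition param} on $B$ is too tangled (it enters through $B(y/z)$, through the Feynman–Kac exponential $e^{-\int_0^t B(\phi_x(s))ds}$, and implicitly through the law of the reflected flow, which does not depend on $B$). Instead I would integrate against a well-chosen test function to linearize the problem.

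The key step is the following observation: the total mass condition. Because $\mathcal Q(x,\cdot)$ is a probability measure (each individual dies and splits, and by Assumption \ref{assu:debut} $B \geq b_1 > 0$ so the flow is killed almost surely in finite time), we have $\int_{\mathcal X} q_B(x,y)\,dy = 1$ for every $B$ and every $x$. So comparing total masses is useless. The right functional is instead obtained by integrating \eqref{eq:forme transition param} against a function of $y$ that, after the change of variables $w = y/z$ (so $y = wz$, $dy = z\,dw$), collapses the $z$-integral. Concretely, substituting and using $\int_{y/L}^1 \widetilde\kappa(z)/z \cdot B(y/z)\sigma(y/z)^{-2} \cdot (\cdots)\,dz$, the change of variable $w=y/z$ turns this into an integral over $w$ with weight $\widetilde\kappa(y/w) w^{-1}$; choosing the test function appropriately (for instance integrating $q_B(x,y)$ against a kernel that reconstructs $\int_{\mathcal X} B(w)\sigma(w)^{-2} \mathbb E[\int_0^\infty e^{-\int_0^t B(\phi_x(s))ds}\,dL_t^{w}(\phi_x)]\,dw$, i.e. effectively the occupation-measure/Feynman–Kac identity) one recovers a clean scalar. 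In fact, the cleanest route uses the occupation-times formula: $\int_{\mathcal X} \sigma(w)^{-2} f(w)\, dL_t^{w}(\phi_x) = \int_0^t f(\phi_x(s))\,ds$ (up to the $\sigma^2$ normalization of the local time), so that
\begin{equation*}
\int_{\mathcal X} B(w)\sigma(w)^{-2}\,\mathbb E\Big[\int_0^\infty e^{-\int_0^t B(\phi_x(s))ds}\,dL_t^{w}(\phi_x)\Big]\,dw = \mathbb E\Big[\int_0^\infty B(\phi_x(t))e^{-\int_0^t B(\phi_x(s))ds}\,dt\Big] = 1,
\end{equation*}
which again is the normalization and carries no information. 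Therefore one must use a genuinely $B$-sensitive functional: integrate $q_B(x,y)$ against $y \mapsto g(y)$ for a suitable $g$ and use that $\mathbb E[\int_0^\infty g(\phi_x(t)) B(\phi_x(t)) e^{-\int_0^t B(\phi_x(s))ds}\,dt]$ is, by the Feynman–Kac / Markov property, the expected value of $g$ evaluated at the (random) trait at the division time, and this is strictly monotone in $B$ in the sense that enlarging $B$ on $I$ strictly shifts the division-time distribution towards earlier times.

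So the actual argument I would run: fix $x$ in (or near) $I$, and consider the probability that the trait at the division time of the first individual lies in $I$, conditional on dividing before the flow has left a neighbourhood of $I$; equivalently, integrate $q_B(x,\cdot)$ against a smooth bump supported where the preimage under fragmentation meets $I$. Using the pathwise coupling of $(\phi_x(t))$ (which is the \emph{same} reflected diffusion for $B_1$ and $B_2$, since $r_0,\sigma_0$ are fixed), write both transition densities with respect to the same occupation/local-time measure, so the only difference is the factor $B_i(\phi_x(t)) e^{-\int_0^t B_i(\phi_x(s))ds}$; since $B_1 \le B_2$ everywhere with strict inequality on $I$, a direct comparison of $\int_0^\infty h(t,\phi_x) B_i(\phi_x(t))e^{-\int_0^t B_i(\phi_x(s))ds}\,dt$ for a suitably chosen nonnegative $h$ — chosen so that the $B_i$-prefactor does not cancel the exponential's monotonicity, e.g. $h$ supported on small $t$ — yields strict inequality after taking $\mathbb E$ and integrating in $z$, because the reflected diffusion started from $x\in I$ spends positive time in $I$ with positive probability before any fixed time. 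This produces $\int g\, q_{B_1}(x,\cdot) \neq \int g\, q_{B_2}(x,\cdot)$, hence $\mathcal Q(r_0,\sigma_0,B_1,\kappa_0) \neq \mathcal Q(r_0,\sigma_0,B_2,\kappa_0)$, proving injectivity.

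The main obstacle I anticipate is the bookkeeping around the local time / occupation-measure change of variables in \eqref{eq:forme transition param} together with the $\widetilde\kappa(z)/z$ weight and the $z$-integration domain $[y/L,1]$: one must choose the test function $g(y)$ so that after the substitution $w=y/z$ the fragmentation weight $\widetilde\kappa$ integrates to a strictly positive constant over a region where $B_1<B_2$, and so that the resulting Feynman–Kac functional genuinely separates $B_1$ from $B_2$ rather than collapsing to the trivial normalization $1$. Handling the strict inequality rigorously — i.e. showing the difference of the two Feynman–Kac expectations is strictly positive and not merely nonnegative — requires a lower bound on the probability that the reflected diffusion started in $I$ remains in $I$ for a short time and divides there, which follows from ellipticity of $\sigma_0$ (Assumption \ref{assu:unique}) and the lower bound $B \ge b_1>0$ (Assumption \ref{assu:debut}); orderliness is exactly what guarantees the sign of $B_2 - B_1$ is constant, so that these positive contributions cannot be cancelled elsewhere.
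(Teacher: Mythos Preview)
Your strategy is sound up to the point where you identify the competing dependence on $B$: testing $q_B(x,\cdot)$ against $g$ yields
\[
\int_{\mathcal X} g(y)\,q_B(x,y)\,dy \;=\; \mathbb E\Big[\int_0^\infty H(\phi_x(t))\,B(\phi_x(t))\,e^{-\int_0^t B(\phi_x(s))\,ds}\,dt\Big],
\]
with $H(w)=\int_0^1 \ktilde(z)\,g(zw)\,dz$. The gap is in your proposed resolution: you suggest taking ``$h$ supported on small $t$'' to let the prefactor dominate, but the only freedom you have is the choice of $g$, and this produces an $h$ that depends on $t$ \emph{only through} $\phi_x(t)$. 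You cannot localise in $t$ from the $y$-side. Without that, the integrand $H\cdot B_i\cdot e^{-\int B_i}$ has no monotonicity in $B_i$ (increasing $B$ raises the prefactor but lowers the exponential), and your ``strict inequality after taking $\mathbb E$'' is unjustified.

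The paper closes exactly this gap by an integration-by-parts / It\^o step that you did not find: writing $B(\phi_x(t))e^{-\int_0^t B}=-\frac{d}{dt}e^{-\int_0^t B}$ and applying It\^o to $f(z\phi_x(t))$ for $f\in\mathcal D(\mathcal L)$ gives
\[
\int_{\mathcal X} f(y)\,q_B(x,y)\,dy \;=\; \int_0^1 \ktilde(z)\,f(zx)\,dz \;+\; \int_0^1 \ktilde(z)\,\mathbb E\Big[\int_0^\infty \mathcal L f_z(\phi_x(t))\,e^{-\int_0^t B(\phi_x(s))\,ds}\,dt\Big]dz,
\]
where now $B$ enters \emph{only} through the exponential, which is genuinely monotone in $B$. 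The remaining work is to construct an explicit $f$ with $\mathcal L f_z>0$ on $\mathcal X$ for every $z\in[\varepsilon,1-\varepsilon]$; the paper does this by hand. Once that is in place, orderliness makes the integrand in the difference $q_{B_1}-q_{B_2}$ nonnegative, forcing $e^{-\int_0^t B_1(\phi_x)}=e^{-\int_0^t B_2(\phi_x)}$ a.s., and the occupation-times formula finishes the argument. Your diagnosis of the obstacle was correct; what was missing is this single It\^o manipulation that decouples the prefactor from the exponential.
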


We further stress the dependence on $\vartheta$ by introducing a subscript in the notation whenever relevant.
We formally obtain a statistical experiment 
$$\mathcal E^n = \big\{\mathbb P_\vartheta^n,\vartheta \in \Theta \big\}$$
by letting $\mathbb P_\vartheta^n$ denote the law of $\mathbb X^n=(X_{u}, u \in \mathbb U_n)$ under $\mathcal P_{\vartheta}$ with initial condition $X_\emptyset$ distributed according to $\nu_\vartheta$ on the product space $\mathcal X^{|\mathbb U_n|}$ endowed with its Borel sigma-field. Therefore, the process is supposed to be stationary for simplicity. The experiment $\mathcal E^n$ is dominated by the Lebesgue measure on $\mathcal X^{|\mathbb T_n|}$ and we obtain a likelihood-type function by setting
\begin{equation} \label{eq:likelihood}
\mathcal L_n\big(\vartheta, (X_u,u \in \mathbb U_n)\big) = \prod_{u \in \mathbb U_n^\star} q_{\vartheta}(X_{u^-},X_u).
\end{equation}
Taking any maximiser of \eqref{eq:likelihood} we obtain a maximum likelihood estimator
$$\widehat \vartheta_n \in \argmax_{\vartheta \in \Theta} \mathcal L_n\big(\vartheta, \mathbb X^n\big)$$
provided a maximiser exists. As noted by a referee, in the case where we observe the full tree, {\it i.e.} $\mathbb U_n = \mathbb T_n$ and thus $\varrho = 1$ in Definition \ref{def incomplete tree}, we have access to the observation $(X_u, X_{u_0}+X_{u_1})$ for every $u \in \mathbb T_{n-1}$. Going back to the expression of the transition density of the bifurcating process itself in \eqref{eq:def trans P}, we may alternatively maximise the contrast 
\begin{align*}
\vartheta & \mapsto \prod_{u \in \mathbb T_n^\ast} p_{B_{\vartheta}}(X_u, X_{u0}, X_{u1}) \\
& = \prod_{u \in \mathbb T_n^\star}\frac{\kappa\big(X_{u0}/(X_{u0}+X_{u1})\big)}{(X_{u0}+X_{u1})\sigma(X_{u0}+X_{u1})^{2}}B_{\vartheta}\left(X_{u0}+X_{u1}\right)\mathbb{E}\Big[\int_0^{\infty}e^{-\int_0^t B_{\vartheta}(\phi_{X_u}(s))ds}dL_t^{X_{u0}+X_{u1}}(\phi_{X_u})\Big],
\end{align*}
 or equivalently
 $$\vartheta  \mapsto \prod_{u \in \mathbb T_n^\ast} B_{\vartheta}\left(X_{u0}+X_{u1}\right)\mathbb{E}\Big[\int_0^{\infty}e^{-\int_0^t B_{\vartheta}(\phi_{X_u}(s))ds}dL_t^{X_{u0}+X_{u1}}(\phi_{X_u})\Big].$$
In particular, the latter contrast does not depend on $\kappa$ which is merely a nuisance parameter here and that can be ignored, in this specific setting, where one can observe the complete tree $(X_u, u \in \mathbb T_n)$.
\subsubsection*{Convergence results and asymptotic efficiency}

We first have an existence and consistency result of $\widehat \vartheta_n$ under the following non-degeneracy assumption that strengthens Assumption \ref{assu:debut}.
\begin{assumption} \label{assu:nondegeneracy}
The function $B_0: \Theta \times \mathcal X \rightarrow [0,\infty)$ is continuous and for some positive $b_3, b_4$, we have
$$0 < b_3 \leq \inf_{\vartheta, x}B_0(\vartheta, x) \leq \sup_{\vartheta,x}B_0(\vartheta,x)  \leq b_4$$
Moreover,  the class  $\mathcal B = \big\{B_0(\vartheta,\cdot),\vartheta \in \Theta\big\}$ is orderly in the sense of Definition \ref{def:ordering}. 
\end{assumption}

\begin{thm}\label{th:consistency}
Work under Assumptions \ref{assu:unique}, 
\ref{assu:noyau}, \ref{assu:param stability} and \ref{assu:nondegeneracy}. Then, for every $\vartheta \in \Theta$, $\widehat \vartheta_n$ converges to $\vartheta$ in probability as $n \rightarrow \infty$.
\end{thm}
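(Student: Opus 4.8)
The strategy is the classical Wald-type consistency argument for M-estimators, adapted to the bifurcating Markov chain structure. The idea is to introduce the (normalised) contrast process
$$
M_n(\vartheta) = \frac{1}{|\mathbb U_n^\star|}\sum_{u \in \mathbb U_n^\star}\log q_\vartheta(X_{u^-}, X_u) = \mathcal M_{\mathbb U_n}\big(\log q_\vartheta\big),
$$
where $\widehat\vartheta_n$ is, by definition, a maximiser of $M_n$. Under Assumption \ref{assu:nondegeneracy} the density $q_\vartheta(x,y)$ is bounded above and below away from zero on the relevant compact set (using the representation \eqref{eq:forme transition param}, the two-sided bounds on $B_0$, and the uniform local time estimates that will be established for the reflected diffusion), so that $\psi_\vartheta = \log q_\vartheta$ is a bounded function on $\mathcal X \times \mathcal X$. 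Hence Corollary \ref{thm:convl2_compact} applies under the true parameter $\vartheta_0$ and gives
$$
\E_{\nu_{\vartheta_0}}\big[\big(M_n(\vartheta) - M(\vartheta)\big)^2\big] \lesssim |\mathbb U_n^\star|^{-1}\,\sup_{x,y}\psi_\vartheta(x,y)^2 \longrightarrow 0,
\qquad M(\vartheta) := \nu_{\vartheta_0}\big(\mathcal Q_{\vartheta_0}\psi_\vartheta\big),
$$
so $M_n(\vartheta) \to M(\vartheta)$ in probability for each fixed $\vartheta$.

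\textbf{Second, the identification step.} I would show that the limit contrast $M(\vartheta)$ is maximised uniquely at $\vartheta = \vartheta_0$. Writing out $M(\vartheta_0) - M(\vartheta) = \int_{\mathcal X}\nu_{\vartheta_0}(dx)\int_{\mathcal X} q_{\vartheta_0}(x,y)\log\big(q_{\vartheta_0}(x,y)/q_\vartheta(x,y)\big)\,dy$, this is exactly $\int \nu_{\vartheta_0}(dx)\, \mathrm{KL}\big(\mathcal Q_{\vartheta_0}(x,\cdot)\,\|\,\mathcal Q_\vartheta(x,\cdot)\big) \geq 0$ by the nonnegativity of the Kullback--Leibler divergence, with equality iff $\mathcal Q_{\vartheta_0}(x,\cdot) = \mathcal Q_\vartheta(x,\cdot)$ for $\nu_{\vartheta_0}$-a.e. $x$. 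Since $\nu_{\vartheta_0}$ has full support on $\mathcal X$ (it is a density bounded below, cf. the analogue of Lemma \ref{lem: uplowbound densite}) and the transitions $q$ are continuous, this forces $\mathcal Q_{\vartheta_0} = \mathcal Q_\vartheta$ as kernels, and then Proposition \ref{prop:identifiabilite} (identifiability from orderliness, which holds by Assumption \ref{assu:nondegeneracy}) together with injectivity of $\vartheta \mapsto B_0(\vartheta,\cdot)$ gives $\vartheta = \vartheta_0$. One must also check $M(\vartheta)$ is finite; this follows from the uniform two-sided bounds on $q_\vartheta$ and the compactness of $\supp(\psi_\star)$.

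\textbf{Third, the passage from pointwise to uniform convergence.} To conclude $\widehat\vartheta_n \to \vartheta_0$ from pointwise convergence plus a well-separated maximum, one needs either uniform convergence of $M_n$ over the compact $\Theta$ or an equivalent equicontinuity argument. I would establish a modulus-of-continuity bound: using Assumption \ref{assu:nondegeneracy} (continuity of $B_0$ on the compact $\Theta \times \mathcal X$, hence uniform continuity) and the regularity of the map $B \mapsto q_B$ coming from \eqref{eq:forme transition param} (the exponential functional and local time terms depend continuously on $B$ in sup-norm), one gets that $\vartheta \mapsto \psi_\vartheta$ is uniformly continuous from $\Theta$ into $(C_b(\mathcal X\times\mathcal X), \|\cdot\|_\infty)$, so $|M_n(\vartheta) - M_n(\vartheta')| \leq \omega(|\vartheta - \vartheta'|)$ deterministically for a modulus $\omega$ independent of $n$, and likewise $|M(\vartheta) - M(\vartheta')| \leq \omega(|\vartheta-\vartheta'|)$. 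A standard chaining/covering argument over a finite $\delta$-net of the compact $\Theta$ then upgrades the pointwise convergence to $\sup_{\vartheta\in\Theta}|M_n(\vartheta) - M(\vartheta)| \to 0$ in probability. Combined with the well-separated maximum at $\vartheta_0$, the usual argmax-continuity theorem (e.g. van der Vaart, Theorem 5.7) yields $\widehat\vartheta_n \to \vartheta_0$ in probability; in particular this also settles the existence of a measurable maximiser since $M_n$ is continuous on a compact set.

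\textbf{Main obstacle.} The genuinely technical point is verifying that $\psi_\vartheta = \log q_\vartheta$ is bounded \emph{uniformly in $\vartheta$} and that $\vartheta \mapsto q_\vartheta$ is suitably continuous in sup-norm — i.e. obtaining the two-sided bounds $0 < c \leq q_\vartheta(x,y) \leq C < \infty$ on $\mathcal X \times \supp(\psi_\star)$ and controlling $\|q_\vartheta - q_{\vartheta'}\|_\infty$. This rests on sharp control of the quantity $\mathbb E\big[\int_0^\infty e^{-\int_0^t B(\phi_x(s))ds}\,dL_t^{z}(\phi_x)\big]$ for the reflected diffusion on $[0,L]$, uniformly in the starting point $x$, the level $z$, and the division rate $B$ ranging over $\mathcal B$. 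The lower bound on $q_\vartheta$ is the delicate part — it requires showing the reflected diffusion accumulates a nontrivial amount of local time at every level $z$ before being killed, which one can get from ellipticity of $\sigma$, the upper bound $B \leq b_4$, and standard occupation-time estimates; the regularity in $\vartheta$ then follows from dominated convergence using continuity of $B_0$ in $\vartheta$.
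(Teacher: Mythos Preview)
Your overall architecture --- Wald-type consistency via (i) pointwise convergence of the contrast, (ii) identification through Kullback--Leibler, (iii) a compactness device --- is exactly the paper's. The identification step is essentially identical (the paper uses $\log x \le 2(\sqrt{x}-1)$, i.e.\ Hellinger rather than KL, but this is cosmetic). For step (iii) the paper uses the classical Wald finite-cover with local suprema $m_{\mathcal U_a}(x,y)=\sup_{a\in\mathcal U}\log q_a(x,y)$ over shrinking neighbourhoods, whereas you propose a deterministic modulus of continuity $|M_n(\vartheta)-M_n(\vartheta')|\le\omega(|\vartheta-\vartheta'|)$ plus a $\delta$-net. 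Both routes are standard; yours is arguably cleaner once the requisite continuity of $\vartheta\mapsto q_\vartheta$ in sup-norm is in hand.

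There is, however, a genuine gap in your ``main obstacle'' paragraph. You assert that $q_\vartheta(x,y)$ is bounded below away from zero on the relevant set, so that $\psi_\vartheta=\log q_\vartheta$ is bounded and Corollary~\ref{thm:convl2_compact} applies. This is false: from \eqref{eq:forme transition param}, the integration range $[y/L,1-\varepsilon]$ collapses as $y\uparrow(1-\varepsilon)L$, so $q_\vartheta(x,y)\to 0$ there, uniformly in $\vartheta$. The paper's Lemma~\ref{lemma:bound_p} gives a lower bound only on $\mathcal X_\eta=[0,(1-\varepsilon)L-\eta]$, not on the full support of the data, and the proof explicitly carries the factor $\eta/L$. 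Consequently $\log q_\vartheta$ is unbounded below, your direct application of Corollary~\ref{thm:convl2_compact} to $\psi_\vartheta$ fails, and your sup-norm equicontinuity of $\vartheta\mapsto\log q_\vartheta$ is not available as stated. The paper circumvents this by truncating, working with $m_{\mathcal U}^{(\eta)}(x,y)=\sup_{a\in\mathcal U}\log q_a(x,y)\mathbf 1_{\{q_a\ge\eta\}}$ and letting $\eta\downarrow 0$. Your approach can be repaired more directly: since both $q_\vartheta$ and $q_{\vartheta_0}$ vanish at the same (linear) rate at $y=(1-\varepsilon)L$ --- the boundary behaviour coming from the integration range, not the $\vartheta$-dependent integrand --- the \emph{ratio} $q_\vartheta/q_{\vartheta_0}$ is uniformly bounded above and below. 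Hence $\log(q_\vartheta/q_{\vartheta_0})$ is bounded and continuous in $\vartheta$ in sup-norm, and your whole argument goes through verbatim for the recentered contrast $M_n(\vartheta)-M_n(\vartheta_0)=\mathcal M_{\mathbb U_n}\big(\log(q_\vartheta/q_{\vartheta_0})\big)$, which has the same maximiser.
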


Our next result gives an explicit rate of convergence and asymptotic normality for $\widehat \vartheta_n$. We need further regularity assumptions.
\begin{assumption}\label{assu:reg_B}
The set $\Theta$ has non empty interior and, for every $x\in\mathcal{X}$ the map $\vartheta\mapsto B_0(\vartheta,x)$ is three times continuously differentiable.
Moreover, for every $1\leq i,j,k\leq d$:
$$\sup_{\vartheta,x}|\partial_{\vartheta_i}B_0(\vartheta,x)|+\sup_{\vartheta,x}|\partial^2_{\vartheta_i\vartheta_j}B_0(\vartheta,x)|+\sup_{\vartheta,x}|\partial^3_{\vartheta_i\vartheta_j\vartheta_k}B_0(\vartheta,x)|<\infty.$$
\end{assumption}

Introduce the Fisher information operator $\Psi(\vartheta)=\nu_{\vartheta}\mathcal{Q}_{\vartheta}\Big((\partial_{\vartheta}\log q_\vartheta)(\partial_{\vartheta}\log q_\vartheta)^T\Big)$ at point $\vartheta \in \Theta$ as the $d\times d$-matrix with entries:
$$
\Psi(\vartheta)_{i,j}=\nu_{\vartheta}\mathcal Q_{\vartheta}\Big(\frac{\partial_{\vartheta_i}q_{\vartheta}\,\partial_{\vartheta_j}q_{\vartheta}}{q_{\vartheta}^2}\Big) = \int_{\mathcal X \times \mathcal X}\frac{\partial_{\vartheta_i}q_{\vartheta}(x,y)\,\partial_{\vartheta_j}q_{\vartheta}(x,y)}{q_{\vartheta}(x,y)^2}\nu_{\vartheta}(x)q_\vartheta(x,y)dxdy,
$$
for $1\leq i,j\leq d$.
\begin{assumption} \label{assu:Ppsi_invertible}
For every $\vartheta$ in the interior of $\Theta$, the matrix $\Psi(\vartheta)$ is nonsingular.
\end{assumption}
Although standard in regular parametric estimation, Assumption \ref{assu:Ppsi_invertible} is not obviously satisfied even if we have the explicit formula  \eqref{eq:forme transition param}, for $q_\vartheta(x,y)$, due to its relatively intricate form. We can however show that it is satisfied in the special case of a trait evolving as a reflected diffusion with constant drift. More general parametrisations are presumably possible, adapting the proof delayed until Appendix \ref{appendix:preuve prop constant drift}.   
\begin{prop} \label{prop:constant drift}
Assume $d=1$, $B_0(\vartheta,x) = \vartheta$ for every $x\in \mathcal X$, with $\Theta=[\vartheta_1,\vartheta_2] \subset (0,\infty)$, $r(x) = r_1<0$ and $\sigma(x)= \sigma_0 >0$ for every $x\in\mathcal{X}$. Let $\kappa(z) = (1-2\varepsilon)^{-1}$ for every $z\in [\varepsilon,1-\varepsilon]$. There exists an explicit open interval $\mathcal I\subset (0,1/2)$ such that Assumption \ref{assu:Ppsi_invertible} is satisfied as soon as $\varepsilon \in \mathcal I$.
\end{prop}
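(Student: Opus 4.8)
The plan is to reduce Assumption \ref{assu:Ppsi_invertible} to a direct computation in the scalar case $d=1$, where $\Psi(\vartheta)$ is just the positive number
$$\Psi(\vartheta)=\int_{\mathcal X\times\mathcal X}\frac{(\partial_\vartheta q_\vartheta(x,y))^2}{q_\vartheta(x,y)^2}\,\nu_\vartheta(x)q_\vartheta(x,y)\,dx\,dy=\nu_\vartheta\mathcal Q_\vartheta\big((\partial_\vartheta\log q_\vartheta)^2\big),$$
so nonsingularity amounts to showing that $\partial_\vartheta\log q_\vartheta(x,y)$ is not $\nu_\vartheta\mathcal Q_\vartheta$-almost-everywhere zero, equivalently that $y\mapsto q_\vartheta(x,y)$ does not depend on $\vartheta$ only through an additive constant in $\log$; but since $\int q_\vartheta(x,y)\,dy=1$ for each $x$, $\partial_\vartheta\log q_\vartheta\equiv 0$ would force $q_\vartheta$ itself to be $\vartheta$-independent, contradicting identifiability (Proposition \ref{prop:identifiabilite}) applied to the orderly family $\{\vartheta:\vartheta\in[\vartheta_1,\vartheta_2]\}$. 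The subtlety is that we need $\Psi(\vartheta)>0$ \emph{pointwise} for every interior $\vartheta$, not merely that $\vartheta\mapsto q_\vartheta$ is globally injective; so the core of the proof is to produce, for each fixed $\vartheta$, an explicit computation of $\partial_\vartheta q_\vartheta$ and show it is not identically zero on the support of $\nu_\vartheta\mathcal Q_\vartheta$.

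The key step is therefore the explicit evaluation of the transition density in the constant-drift, constant-rate, uniform-fragmentation case. With $B_0(\vartheta,x)=\vartheta$, $r(x)=r_1<0$, $\sigma(x)=\sigma_0$ and $\kappa\equiv(1-2\varepsilon)^{-1}$, formula \eqref{eq:forme transition param} becomes
$$q_\vartheta(x,y)=\frac{\vartheta}{\sigma_0^2(1-2\varepsilon)}\int_{y/L}^{1}\frac{\mathbf 1_{\{z\in[\varepsilon,1-\varepsilon]\}}}{z}\,\mathbb E\Big[\int_0^\infty e^{-\vartheta t}\,dL_t^{y/z}(\phi_x)\Big]dz.$$
Here the inner expectation $\mathbb E\big[\int_0^\infty e^{-\vartheta t}dL_t^{a}(\phi_x)\big]$ is, by the occupation-time / resolvent formula for the reflected Brownian-motion-with-constant-drift generator $\tfrac{\sigma_0^2}{2}f''+r_1f'$ on $[0,L]$, equal to $g_\vartheta(x,a)$ where $g_\vartheta$ is the $\vartheta$-Green function of that generator on $[0,L]$ with reflecting (Neumann) boundary conditions — an object one can write down in closed form as a combination of exponentials $e^{\lambda_\pm(\vartheta)\cdot}$ with $\lambda_\pm(\vartheta)=\sigma_0^{-2}\big(-r_1\pm\sqrt{r_1^2+2\sigma_0^2\vartheta}\big)$. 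I would substitute this explicit Green function, carry out the (elementary but tedious) $z$-integral, and then differentiate the resulting closed-form expression in $\vartheta$. The claim is that the logarithmic derivative $\partial_\vartheta\log q_\vartheta(x,y)$, as a function of $(x,y)$, is a genuinely non-constant (in fact nontrivial) function except possibly for a bad set of $\varepsilon$'s; the role of the hypothesis $\varepsilon\in\mathcal I$ for an explicit open interval $\mathcal I\subset(0,1/2)$ is precisely to rule out the algebraic coincidences that could make this derivative vanish $\nu_\vartheta\mathcal Q_\vartheta$-a.e. (for instance, ranges of $\varepsilon$ where the $z$-integration collapses the $x$ or $y$ dependence).

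Concretely, the steps I would carry out are: (i) record the explicit Neumann Green function $g_\vartheta(x,a)$ on $[0,L]$ for $\tfrac{\sigma_0^2}{2}\partial_{xx}+r_1\partial_x$, checking it indeed represents $\mathbb E_x[\int_0^\infty e^{-\vartheta t}dL_t^a]$ via the resolvent identity and Itô--Tanaka; (ii) insert it into \eqref{eq:forme transition param} and evaluate the $z$-integral to get $q_\vartheta(x,y)$ in closed form on the region $(x,y)\in[0,L]\times[0,(1-\varepsilon)L]$; (iii) compute $\partial_\vartheta q_\vartheta$ (differentiating $\lambda_\pm(\vartheta)$ and the prefactor), form $\partial_\vartheta\log q_\vartheta=\partial_\vartheta q_\vartheta/q_\vartheta$, and exhibit two points $(x_1,y_1)\neq(x_2,y_2)$ in the interior of the support of $\nu_\vartheta(x)q_\vartheta(x,y)\,dx\,dy$ at which it takes different values, for all $\varepsilon$ in a computable interval $\mathcal I$; (iv) conclude $\Psi(\vartheta)=\int(\partial_\vartheta\log q_\vartheta)^2\,d(\nu_\vartheta\mathcal Q_\vartheta)>0$ by continuity of the integrand (Assumptions \ref{assu:reg_B} give the requisite smoothness) and positivity of the density $\nu_\vartheta(x)q_\vartheta(x,y)$ on an open set, the latter following from Lemma \ref{lem: uplowbound densite}. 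The main obstacle is step (ii)–(iii): the $z$-integral against $\frac1z g_\vartheta(x,y/z)$ over $[y/L\vee\varepsilon,\,1-\varepsilon]$ does not simplify to anything tidy, and isolating a clean algebraic condition on $\varepsilon$ under which $\partial_\vartheta\log q_\vartheta\not\equiv 0$ — rather than just invoking the abstract identifiability of Proposition \ref{prop:identifiabilite}, which does not by itself give pointwise nonsingularity of $\Psi(\vartheta)$ — is where the real work lies, and is why the statement is phrased with an unspecified explicit interval $\mathcal I$.
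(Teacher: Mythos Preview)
Your high-level reduction is correct: since $d=1$, Assumption \ref{assu:Ppsi_invertible} amounts to $\Psi(\vartheta)>0$, and by the lower bound of Lemma \ref{lemma:bound_p} (which guarantees that $\nu_\vartheta(dx)q_\vartheta(x,y)dy$ charges every open set) together with continuity, it suffices to exhibit a single point $(x,y)$ at which $\partial_\vartheta q_\vartheta(x,y)\neq 0$. The paper proceeds exactly this way --- in particular there is no need to find two points with different values of $\partial_\vartheta\log q_\vartheta$. Where your proposal diverges is in the computation itself, and this is not merely cosmetic.

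You propose to represent $\mathbb E\big[\int_0^\infty e^{-\vartheta t}\,dL_t^a(\phi_x)\big]$ via the Neumann Green function built from the two fundamental solutions $e^{\lambda_\pm(\vartheta)x}$, carry out the $z$-integral, and then differentiate. The paper instead uses the \emph{spectral expansion} of the transition density $\rho_t(x,z)$ of the reflected drifted Brownian motion, namely $\rho_t(x,z)=\pi(z)+\tfrac{2}{L}e^{r_1(z-x)}\sum_{n\ge 1}\tfrac{e^{-a(n)t/2}}{a(n)}g(n,x)g(n,z)$ with explicit trigonometric eigenfunctions $g(n,\cdot)$ and eigenvalues $a(n)=r_1^2+\pi^2n^2/L^2$. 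After rewriting $\partial_\vartheta q_\vartheta(x,y)=\tfrac{1}{1-2\varepsilon}\int_0^\infty(1-\vartheta t)e^{-\vartheta t}\int_{y/(1-\varepsilon)}^{y/\varepsilon}\rho_t(x,z)\,\tfrac{dz}{z}\,dt$, the key observation is that the stationary term $\pi(z)$ is annihilated by $\int_0^\infty(1-\vartheta t)e^{-\vartheta t}\,dt=0$, while each mode $e^{-a(n)t/2}$ contributes a strictly positive factor proportional to $(\vartheta+a(n)/2)^{-2}$. One is left with $\partial_\vartheta q_\vartheta(x,y)$ as the $z$-integral over $[y/(1-\varepsilon),y/\varepsilon]$ of the series $\sum_{n\ge 1}\tfrac{g(n,x)g(n,z)}{(\vartheta+a(n)/2)^2}$, and the task reduces to showing this series is positive on the integration range.

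This is where the restriction $\varepsilon\in\mathcal I$ actually enters, and the mechanism is not the one you guessed. Fix $x$ with $g(n,x)\neq 0$ for every $n$; by continuity each product $g(n,x)g(n,z)$ is positive on a neighbourhood of $z=x$, and by normal convergence the tail of the series can be dominated by its first term uniformly on such a neighbourhood. One then takes $\varepsilon$ close enough to $1/2$ so that, for a suitable $y$, the integration window $[y/(1-\varepsilon),y/\varepsilon]$ --- whose relative width $(1-2\varepsilon)/\varepsilon(1-\varepsilon)$ shrinks to zero as $\varepsilon\to 1/2$ --- fits inside the intersection of finitely many of these neighbourhoods. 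Thus $\mathcal I$ is of the form $(\varepsilon_0,1/2)$ and its role is \emph{localisation of the $z$-integral near the diagonal $z=x$}, not the avoidance of algebraic degeneracies of the $z$-integration. Your Green-function route is in principle legitimate, but it does not isolate this mechanism; without the spectral decomposition it is unclear how you would identify an explicit $\mathcal I$ or prove non-vanishing of $\partial_\vartheta q_\vartheta$ at all.
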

We are ready to state our final result on asymptotic normality of $\widehat \vartheta_n$.
\begin{thm}\label{thm:norm_asympt}
Work under Assumptions \ref{assu:unique}, 
\ref{assu:noyau}, \ref{assu:param stability}, \ref{assu:nondegeneracy}, \ref{assu:reg_B} and \ref{assu:Ppsi_invertible}. For every $\vartheta$ in the interior of $\Theta$, if $\mathbb U_n$ is a $\varrho$-regular incomplete tree, we have
\begin{align*}
\big|\mathbb{U}_n\big|^{1/2}(\widehat{\vartheta}_n-\vartheta) \rightarrow \mathcal N\big(0,\Psi(\vartheta)^{-1}\big)
\end{align*}
in distribution as $n\rightarrow \infty$, where $\mathcal N(0,\Psi(\vartheta)^{-1})$ denote the $d$-dimensional Gaussian distribution with mean $0$ and covariance the inverse of the Fisher matrix $\Psi(\vartheta)$.
\end{thm}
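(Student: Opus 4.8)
The plan is to follow the classical asymptotic-normality argument for M-estimators / maximum likelihood, adapted to the bifurcating-tree setting where the law of large numbers and central limit theorem are supplied by the empirical-measure machinery of Section~\ref{sec conv empirical measures} (in particular Corollary~\ref{thm:convl2_compact}) rather than by an i.i.d.\ or stationary-ergodic time series. Write $\ell_n(\vartheta) = \sum_{u\in\mathbb U_n^\star}\log q_\vartheta(X_{u^-},X_u)$, so that $\widehat\vartheta_n$ maximises $\ell_n$. By Theorem~\ref{th:consistency}, $\widehat\vartheta_n\to\vartheta$ in probability, hence eventually lies in the interior of $\Theta$, and a Taylor expansion of $\nabla\ell_n$ around $\vartheta$ gives
\begin{equation*}
0 = |\mathbb U_n|^{-1/2}\nabla\ell_n(\vartheta) + \Big(|\mathbb U_n|^{-1}\nabla^2\ell_n(\bar\vartheta_n)\Big)\,|\mathbb U_n|^{1/2}(\widehat\vartheta_n-\vartheta),
\end{equation*}
for some $\bar\vartheta_n$ on the segment between $\widehat\vartheta_n$ and $\vartheta$. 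The conclusion then follows from three ingredients: (a) a central limit theorem for the score term $|\mathbb U_n|^{-1/2}\nabla\ell_n(\vartheta)$, with limit $\mathcal N(0,\Psi(\vartheta))$; (b) a uniform law of large numbers showing $|\mathbb U_n|^{-1}\nabla^2\ell_n(\bar\vartheta_n)\to -\Psi(\vartheta)$ in probability; and (c) invertibility of $\Psi(\vartheta)$, which is exactly Assumption~\ref{assu:Ppsi_invertible}, allowing one to solve for $|\mathbb U_n|^{1/2}(\widehat\vartheta_n-\vartheta)$ and apply Slutsky's lemma.

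For step (a), I would write the score as $\nabla\ell_n(\vartheta) = \sum_{u\in\mathbb U_n^\star}g_\vartheta(X_{u^-},X_u)$ with $g_\vartheta = \partial_\vartheta\log q_\vartheta = (\partial_\vartheta q_\vartheta)/q_\vartheta$, a bounded function on $\mathcal X\times\mathcal X$ by Assumptions~\ref{assu:nondegeneracy}, \ref{assu:reg_B} and the uniform lower bound on $q_\vartheta$ coming from the local-time representation \eqref{eq:forme transition param} together with the reflected-diffusion structure on the compact $[0,L]$ (the lower bound on $q_\vartheta$ is what makes $\log q_\vartheta$ and its derivatives bounded — this point deserves a short lemma). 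The identity $\int \partial_\vartheta q_\vartheta(x,\cdot) = \partial_\vartheta\big(\int q_\vartheta(x,\cdot)\big) = 0$ gives $\mathcal Q_\vartheta g_\vartheta = 0$, i.e.\ the score increments are centered conditionally on the parent, so the summands form martingale-type increments along the tree. The CLT then follows from the bifurcating-Markov-chain central limit theorem (Theorem~19 in Guyon~\cite{guyon2007limit}, or its incomplete-tree analogue announced after \eqref{convGuyon}) applied to $g_\vartheta$; the asymptotic variance is $\nu_\vartheta(\mathcal Q_\vartheta(g_\vartheta g_\vartheta^T)) - \nu_\vartheta((\mathcal Q_\vartheta g_\vartheta)(\mathcal Q_\vartheta g_\vartheta)^T) = \Psi(\vartheta)$, the second term vanishing. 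Here one must check the normalisation is $|\mathbb U_n|$ and not $|\mathbb U_n\cap\mathbb G_n|$: because the martingale-bracket contributions from generation $m$ scale like $|\mathbb U_n\cap\mathbb G_m|$ and these are summed over $m\le n$, the total bracket is of order $|\mathbb U_n|$, which is the content of the variance estimate in Corollary~\ref{thm:convl2_compact} and the reason the tree-regularity Definition~\ref{def incomplete tree} is imposed.

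For step (b), I would expand $\nabla^2\ell_n(\vartheta')=\sum_u \partial_\vartheta^2\log q_{\vartheta'}(X_{u^-},X_u)$ and combine two facts: first, a pointwise law of large numbers $|\mathbb U_n|^{-1}\nabla^2\ell_n(\vartheta')\to \nu_{\vartheta}\mathcal Q_\vartheta(\partial_\vartheta^2\log q_{\vartheta'})$ in probability for fixed $\vartheta'$, which is a direct consequence of the $L^2$ bound in Corollary~\ref{thm:convl2_compact} applied to the bounded test function $\partial_\vartheta^2\log q_{\vartheta'}$; second, an equicontinuity argument (using the uniform-in-$(\vartheta,x,y)$ bound on third derivatives from Assumption~\ref{assu:reg_B}, again via the lower bound on $q_\vartheta$) turning this into a uniform-in-$\vartheta'$ convergence over a neighbourhood of $\vartheta$, so that plugging in $\vartheta'=\bar\vartheta_n\to\vartheta$ is legitimate. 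Finally, the standard information identity $\nu_\vartheta\mathcal Q_\vartheta(\partial_\vartheta^2\log q_\vartheta) = -\nu_\vartheta\mathcal Q_\vartheta\big((\partial_\vartheta\log q_\vartheta)(\partial_\vartheta\log q_\vartheta)^T\big) = -\Psi(\vartheta)$, obtained by differentiating $\int q_\vartheta(x,\cdot)=1$ twice in $\vartheta$ and integrating against $\nu_\vartheta(dx)$, identifies the limit. The main obstacle, I expect, is not any single one of these steps in isolation but the verification of the uniform boundedness of $\log q_\vartheta$ and its first three $\vartheta$-derivatives: one needs a genuine two-sided control $0<c\le q_\vartheta(x,y)\le C$ on $\mathcal X\times[0,(1-\varepsilon)L]$ uniform in $\vartheta\in\Theta$, which requires estimating the quantity $\mathbb E[\int_0^\infty e^{-\int_0^t B_\vartheta(\phi_x(s))ds}dL_t^{y}(\phi_x)]$ from above and below using the boundedness of $B_\vartheta$ (Assumption~\ref{assu:nondegeneracy}), ellipticity of $\sigma$, and occupation-time/local-time estimates for the reflected diffusion on $[0,L]$; differentiating the exponential weight in $\vartheta$ under the expectation (justified by dominated convergence, again using the uniform bounds on $\partial_\vartheta B_\vartheta$) then propagates the control to the derivatives of $q_\vartheta$. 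Once these analytic bounds are in place, the probabilistic part is routine.
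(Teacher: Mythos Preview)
Your proposal is correct and follows essentially the same route as the paper: the paper likewise establishes uniform two-sided bounds on $q_\vartheta$ (Lemma~\ref{lemma:bound_p}) and on $\partial_\vartheta^i\log q_\vartheta$ for $i\le 3$ (Lemma~\ref{lemma:partialp}) via the local-time representation, then Taylor-expands the score, applies Guyon's CLT for bifurcating Markov chains (extended to incomplete trees) to the score term, and uses Corollary~\ref{thm:convl2_compact} together with consistency for the Hessian/remainder. The only cosmetic difference is that the paper expands to second order (isolating $\partial_\vartheta\Gamma_\vartheta$ at the true $\vartheta$ and a third-derivative remainder at an intermediate point) rather than your first-order expansion with Hessian evaluated at $\bar\vartheta_n$, but the two packagings are equivalent and rely on exactly the same analytic estimates.
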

Several remarks are in order:
{\bf 1)} Although asymptotically optimal, the practical implementation of $\widehat \vartheta_n$ is a challenging question that we plan to address in a systematic way. {\bf 2)} As for classical estimation in diffusion processes (see {\it e.g.} \cite{Dacunha, vgcjj}), the assumptions of Theorem \ref{thm:norm_asympt}, especially Assumption \ref{assu:Ppsi_invertible} are standard. However, the fact that they hold true in the simple case of Proposition \ref{prop:constant drift} and a glance at the proof is an indication that they are certainly true in wider generality.
\section{Numerical implementation}\label{sec:numerical}
We consider the implementation of the estimator $\widehat \vartheta_n$ in the case of a branching population structured by a trait drawn according to a Brownian motion reflected on $[0,1]$, namely
\begin{align*}
\phi_x(t)=x+W_t+\ell_t,\;\;\ell_t=\int_0^t(\mathbf{1}_{\{\phi_x(s)=0\}}+\mathbf{1}_{\{\phi_x(s)=1\}})d\ell_s,\;\;t \geq 0,
\end{align*}
where $(W_t)_{t\geq 0}$ is a standard Brownian motion. We pick $\kappa(z) = (1-2\varepsilon)^{-1}\mathbf{1}_{[\varepsilon,1-\varepsilon]}(z)$ so that an individual with trait $x$ at division splits into two individuals with traits $Ux$ and $(1-U)x$ respectively, where $U$ is uniformly distributed on $[\varepsilon,1-\varepsilon]$. We pick $\varepsilon = 10^{-4}$. 

\subsection{Generation of simulated data}\label{sec:simu_data}
We test our estimation procedure on simulated data. Given a division rate $B$ and an initial trait $x_0\in\mathbb{R}$, we construct a dataset constituted of a full tree of size $|\mathbb T_n|=2^{n+1}-1$ using a queue. \\

{\it Initialisation step.} We begin with one individual in the queue with trait $x_0$ at time $0$. It is the ancestor of the population\\

{\it While step.} While the queue is not empty, we pick $u$ in the queue,  
\begin{itemize}
\item[i)] we simulate the dynamics $(\phi_{X_u}(t), 0\leq t\leq T)$ for the trait of $u$ using the Euler scheme for reflected stochastic differential equations of \cite{lepingle} with initial condition $X_u$ and time step $\Delta t$ until time $T$, for some $T$ sufficiently large,
\item[ii)] we draw the lifetime $\zeta_u$ of $u$ by rejection sampling,
\item[iii)] if $|u|\leq n-1$, we add to the queue two new individuals with respective traits at birth given by $X_{u0} = \eta x$ and $X_{u1} = (1-\eta)x$ where $\eta$ is a realisation of a uniform random variable on $[\varepsilon,1-\varepsilon]$ and $x=\phi_{X_u}(\zeta_u)$ is the trait of $u$ at division,
\item[iv)] we add the pairs $(X_u,X_{u0})$ and $(X_u,X_{u1})$ to the dataset,
\item[v)] we remove the individual $u$ from the queue.
\end{itemize}
\subsection{Implementation of the maximum likelihood type contrast}
We pick $\mathbb U_n = \mathbb T_n$. For a given dataset $\mathbb{X}^n$, we approximate $\mathcal L_n\big(\vartheta, (X_u,u \in \mathbb T_n)\big)$ using, for a given $\vartheta$, the nonparametric estimator $\widehat q_{n}(X_{u^-},X_u)$ introduced in Section \ref{sec: est nonparam}.\\
More specifically, we implement $\widehat q_{n}(x_0,y_0)=\tfrac{\mathcal M_{\mathbb U_n}\big(G_{h_1,h_2}^{\otimes 2}(x_0-\cdot,y_0-\cdot)\big)}{\mathcal M_{\mathbb U_n}\big(G_h(x_0-\cdot)\big)\vee \varpi}$  for every $(x_0,y_0)$ on a grid  of mesh $n_1^{-1}\times n_2^{-1}$ of $[0,1]\times[0,1]$ with $n_1=n_2=200$, $G(x)=(2\pi)^{-1/2}\exp(-x^2/2)$, $h = 2|\mathbb{T}_n|^{-1/3}$, $h_1 = h_2 = 10^{-1} \cdot h^{1/2}$, $\varpi = 10^{-6}$. We next use an interpolation scheme with splines provided by the package {\it{Interpolations}} in Julia \cite{bezanson2017julia} to compute the value of the transition at each point of the dataset $(X_{u^-},X_u)\in\mathbb{X}^n$.  For synthetic data, we pick $n=19$, resulting in a tree of size $2^{20}-1=1\,048\,575$ with initial value $x_0=0.5$ and $\Delta t=5\times 10^{-4}$. 
\subsection{Results}
We consider the following parametric classes $B_0(\vartheta,x)=\vartheta$ and $B_1(\vartheta,x)=1+\vartheta x$.
We compute $300$ Monte-Carlo samples of size $|\mathbb{T}_n| = 2^{15}-1=32\,767$ for $\vartheta =\vartheta_1 = 2$ and $\vartheta = \vartheta_2 = 15$ in both cases. Therefore, we apply our results to four different cases. In each case, we approximate $q_{\vartheta}(X_{u^-},X_u)$ for different values of $\vartheta \in \Theta = [\vartheta_{\min},\vartheta_{\max}]$ and we compute the corresponding $\widehat \vartheta_n$. We progressively reduce the increment $\Delta\vartheta$ for the choice of $\vartheta$ until the contrast of likelihood starts to be noisy (see Figure \ref{fig:smooth}), adapting at each level the choice for the upper and lower bounds of $\Theta$.
\begin{figure}
\includegraphics[scale=0.45]{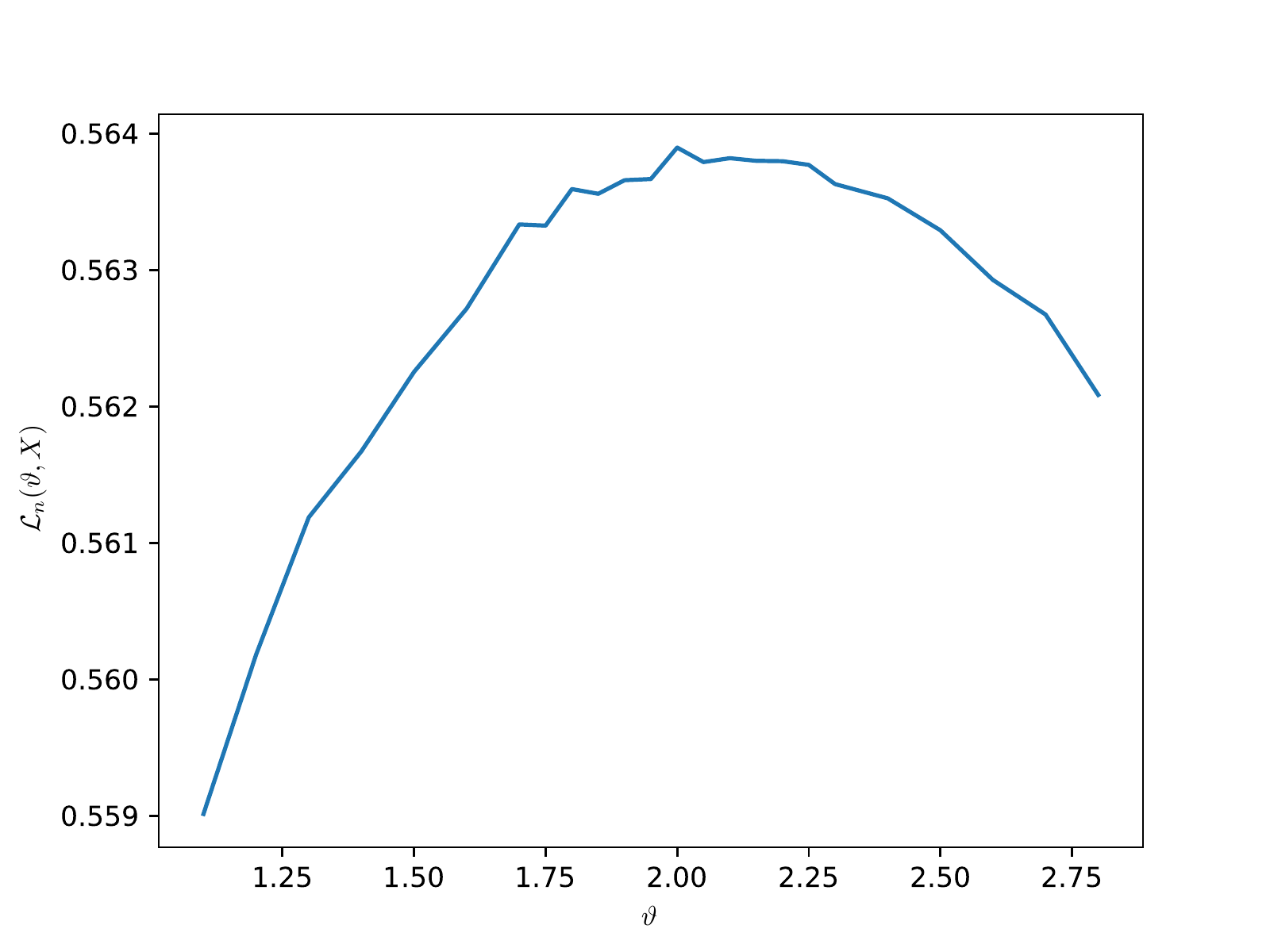}
\includegraphics[scale=0.45]{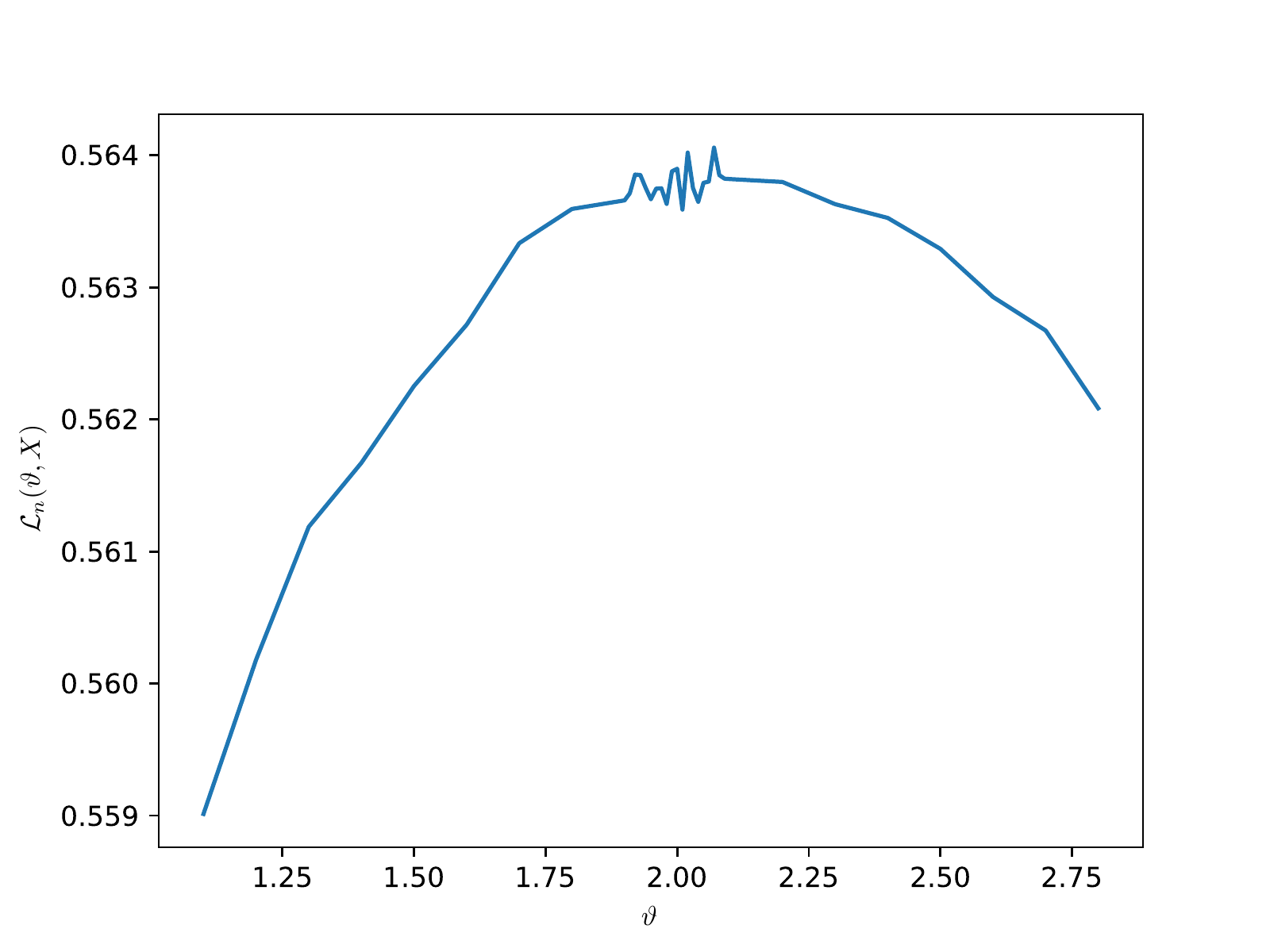}
\caption{The contrast $\mathcal L_n\big(\vartheta, (X_u,u \in \mathbb T_n)\big)$ for $n=14$ and $B(x,\vartheta)\equiv 2$ with $\Delta\vartheta=0.05$ (left) and $\Delta\vartheta=0.01$ (right). We look at the results only for $\Delta\vartheta = 0.05$ because for smaller values of the increment, the noise due to computational errors is too important compared to the different values of the contrast.}
\label{fig:smooth}
\end{figure} The results are displayed in Table \ref{table:Bconstant}. We recover the parameter in all four cases, with various accuracies. The most accurate value is obtained for $B_1$ with a small value of the parameter, {\it{i.e.}} $\vartheta=2$. We did not reach the optimal accuracy $|\mathbb T_n^\star|^{-1/2}\approx 6\times 10^{-3}$.
One could presumably obtain a better accuracy by choosing a finer discretisation of $[0,1]\times [0,1]$ for the computation of the $\widehat q_n$. But this choice leads to an important increase of the computational time. The results in the case of a linear division rate are less accurate. Those results could also probably be improved using a cross-validation procedure for the choice of the bandwidth parameters $h,h_1,h_2$.
\begin{table}[h]
\centering

\begin{tabular}{|c|c|c|c|c|c|c|}
\hline
$B$ & $\vartheta$ & Mean & Std Dev. & $\vartheta_{\min}$ & $\vartheta_{\max}$ & $\Delta \vartheta$\\
\hline
$B_0$&$2$ & $1.9908$ & $0.0845$ & $1.8$ & $2.2$ & $0.05$\\
&$15$ & $15.029$ & $0.4763$ & $14$ & $16$ & $0.25$\\ 
\hline
$B_1$&$2$ & $2.1392$ & $0.3485$ & $1.375$ & $3$ & $0.125$\\
&$15$ & $15.023$& $0.6425$ & $13.75$ & $16.5$ & $0.25$\\
\hline
\end{tabular}
\vspace{0.3cm}
\caption{Results for $B$ in $\mathcal{B}_1$ and $\mathcal{B}_2$. For each parametric class of functions $\mathcal{B}_1$ and $\mathcal{B}_2$, we display the results for $\vartheta=2$ and $\vartheta=15$. The third and fourth columns corresponds respectively to the mean value and the standard deviation of $\widehat{\vartheta}_n$, computed with $300$ different data sets of size $2^{n+1}-1=32767$. The fifth and sixth columns correspond to a $95\%$ confidence interval. The last column corresponds to the value of the step for the discretisation of $\Theta$, which limits the accuracy of the result.}
\label{table:Bconstant}
\end{table}

\section{Proofs} \label{sec: proofs}

\subsection{Proof of Proposition \ref{prop:explicit trans}}
We first prove \eqref{eq:def trans P}.
 By \eqref{def:transBMC}, for any bounded $\psi:\mathcal X^3\rightarrow \R$ and $x \in \mathcal X$, we have
%
\begin{align}\label{eq:P}\nonumber
\mathcal P\psi(x) & =\mathbb{E}\Big[\int_{0}^1 \kappa(z)\int_0^{\infty}\psi\big(x,z\phi_x(t),(1-z)\phi_x(t)\big) B\big(\phi_x(t)\big)e^{-\int_0^t B(\phi_x(s))ds}dtdz\Big] \\
& = \int_\mathbb{R}\int_{0}^{1}\psi\big(x,y_0,\tfrac{1-z}{z}y_0\big)B\big(y_0/z\big)\sigma(y_0/z)^{-2}\mathbb{E}\Big[\int_0^{\infty}e^{-\int_0^t B(\phi_x(s))ds}dL_t^{y_0/z}(\phi_x)\Big]\frac{\kappa(z)}{z}dzdy_0 \\\nonumber
& = \int_{\mathcal{D}} \psi\big(x,y_0,y_1\big)\frac{\kappa\big(y_0/(y_0+y_1)\big)}{y_0+y_1}\frac{B\left(y_0+y_1\right)}{\sigma(y_0+y_1)^{2}}\mathbb{E}\Big[\int_0^{\infty}e^{-\int_0^t B(\phi_x(s))ds}dL_t^{y_0+y_1}(\phi_x)\Big]dy_0dy_1,
\end{align}
where we set $y_1=\frac{1-z}{z}y_0$ in order to obtain the last line and where $(L_t^y(\phi_x))_{t \geq 0}$ is the local time of $\phi_x$ at $y\in\mathcal{X}$. The integral is taken over the domain 
$$\mathcal D = \big\{ (y_0,y_1)\in\mathbb{R}^2,\ \tfrac{\varepsilon}{1-\varepsilon}y_0\leq y_1\leq \tfrac{1-\varepsilon}{\varepsilon}y_0\big\} \subset \supp\big((y_0, y_1) \mapsto \kappa(y_0/(y_0+y_1))\big)$$
therefore the above integral is well defined and the representation \eqref{eq:def trans P} is proved.
We turn to \eqref{eq:def trans Q}. 
From \eqref{eq:P}, we get
\begin{align*}
\mathcal P_0 \varphi(x) =\int_\mathbb{R}\int_{0}^{1}\varphi(y_0)B\big(y_0/z\big)\sigma(y_0/z)^{-2}\mathbb{E}\Big[\int_0^{\infty}e^{-\int_0^t B(\phi_x(s))ds}dL_t^{y_0/z}(\phi_x)\Big]\frac{\kappa(z)}{z}dzdy_0 \end{align*}
and
\begin{align*}
\mathcal P_1 \varphi(x) & =  \int_\mathbb{R}\int_{0}^{1}\varphi\big(\tfrac{1-z}{z}y_0\big)B\big(y_0/z\big)\sigma(y_0/z)^{-2}\mathbb{E}\Big[\int_0^{\infty}e^{-\int_0^t B(\phi_x(s))ds}dL_t^{y_0/z}(\phi_x)\Big]\frac{\kappa(z)}{z}dzdy_0\\
& =\int_\mathbb{R}\int_{0}^{1}\varphi\big(y_1\big)B\big(y_1/\bar{z}\big)\sigma(y_1/\bar{z})^{-2}\mathbb{E}\Big[\int_0^{\infty}e^{-\int_0^t B(\phi_x(s))ds}dL_t^{y_1/\bar{z}}(\phi_x)\Big]\frac{\kappa(1-\bar{z})}{\bar{z}}d\bar{z}dy_1,
\end{align*}
where the second equality is given by two successive changes of variables $y_1 = \tfrac{1-z}{z}y_0$ and $\bar{z} = 1-z$. Finally,
\begin{align*}
\mathcal Q \varphi(x) & = \tfrac{1}{2}\big(\mathcal P_0 \varphi(x) + \mathcal P_1 \varphi(x)\big)\\
& = \int_\mathbb{R}\int_{0}^{1}\varphi(y)B\big(y/z\big)\sigma(y/z)^{-2}\mathbb{E}\Big[\int_0^{\infty}e^{-\int_0^t B(\phi_x(s))ds}dL_t^{y/z}(\phi_x)\Big]\frac{\ktilde(z)}{z}dzdy,
\end{align*}
where $\ktilde(z) = \tfrac{1}{2}(\kappa(z)+\kappa(1-z))$.
Since $\supp(\kappa) \subset [\varepsilon, 1-\varepsilon]$, the above integrals are well defined and \eqref{eq:def trans Q} is established.

\subsection{Proof of Proposition \ref{th:ergodicity}}
The proof goes along a classical path: we establish a drift and a minorisation condition in Proposition \ref{prop:lyapunov} and \ref{prop:minoration} below, and then apply for instance Theorem 1.2. in \cite{hairer2011}, see also the references therein.

\begin{prop}[Drift condition] \label{prop:lyapunov}
Let $V(x)=x^2$. Work under Assumptions \ref{assu:unique}, \ref{assu:debut} and \ref{assu:noyau}. There exist explicitly computable $0 < v_1 = v_1(\varepsilon) < 1$ and $v_2 = v_2(\varepsilon, r_1,r_2,\sigma_1,\sigma_2,b_1)>0$ such that
\begin{align*}
\mathcal QV(x)\leq v_1V(x)+v_2.
\end{align*}
\end{prop}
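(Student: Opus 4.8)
The strategy is to compute $\mathcal{Q}V(x)$ using the explicit representation \eqref{eq:def trans Q} and then bound it by decomposing the state space into a bounded region (where $x^2$ is bounded) and its complement (where the drift $r$ pushes inward by Assumption \ref{assu:unique}). The key observation is that $\mathcal{Q}V(x) = \tfrac{1}{2}(\mathcal{P}_0 V(x) + \mathcal{P}_1 V(x))$, and using the probabilistic form of $\mathcal{P}$ from \eqref{eq:P} one has
$$\mathcal{Q}V(x) = \mathbb{E}\Big[\int_0^1 \widetilde\kappa(z)\, z^2 \int_0^\infty \phi_x(t)^2\, B(\phi_x(t))\, e^{-\int_0^t B(\phi_x(s))ds}\, dt\, dz\Big] \cdot \text{(symmetrised)},$$
so after using $\supp(\kappa)\subset[\varepsilon,1-\varepsilon]$ and symmetrising the $z$ and $1-z$ contributions, the multiplicative constant in front of $\phi_x(t)^2$ is $\tfrac12\int_0^1 \widetilde\kappa(z)(z^2+(1-z)^2)dz$, which is bounded above by $\max_{z\in[\varepsilon,1-\varepsilon]} \tfrac12(z^2+(1-z)^2) = \tfrac12((1-\varepsilon)^2+\varepsilon^2) < 1$ since $\varepsilon \in (0,1/2)$. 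This is the source of the contraction factor $v_1 = v_1(\varepsilon)<1$.

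\textbf{Key steps.} First, I would rewrite $\mathcal{Q}V(x)$ as an expectation over the trait flow $\phi_x$ killed at rate $B$, namely $\mathcal{Q}V(x) = c_\varepsilon\, \mathbb{E}\big[\int_0^\infty \phi_x(t)^2\, B(\phi_x(t))e^{-\int_0^t B(\phi_x(s))ds}dt\big]$ with $c_\varepsilon = \tfrac12\int \widetilde\kappa(z)(z^2+(1-z)^2)dz \le \tfrac12(\varepsilon^2 + (1-\varepsilon)^2) < 1$; equivalently $\mathcal{Q}V(x) = c_\varepsilon\, \mathbb{E}[\phi_x(\tau)^2]$ where $\tau$ is the killing time with hazard $B$. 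Second, I would control $\mathbb{E}[\phi_x(\tau)^2]$ by a Lyapunov argument on the diffusion itself: applying Itô's formula to $\phi_x(t)^2$ and using Assumption \ref{assu:unique} ($\sigma \le \sigma_2$, and $x r(x) < 0$ for $|x|\ge r_2$, with linear growth of $r$ otherwise) one gets $d\phi_x(t)^2 \le (a_1 - a_2 \phi_x(t)^2)dt + d(\text{mart})$ outside a bounded set, or more crudely $\mathbb{E}[\phi_x(t)^2] \le |x|^2 + (\text{const})\cdot t$ together with a uniform-in-$t$ bound $\sup_t \mathbb{E}[\phi_x(t)^2] \le \max(|x|^2, R_0)$ for an explicit $R_0 = R_0(r_1,r_2,\sigma_2)$ using the inward drift. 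Third, since $B \ge b_1 > 0$ the killing time $\tau$ is stochastically dominated by an exponential of rate $b_1$, so $\mathbb{E}[\phi_x(\tau)^2]$ is a mixture over $t$ of $\mathbb{E}[\phi_x(t)^2]$ weighted by a (sub-)probability with exponential tails; combining with the bound $\mathbb{E}[\phi_x(t)^2]\le |x|^2 + C t$ and $\mathbb{E}[\tau] \le b_1^{-1}$ gives $\mathbb{E}[\phi_x(\tau)^2] \le |x|^2 + C/b_1$. This yields $\mathcal{Q}V(x) \le c_\varepsilon |x|^2 + c_\varepsilon C/b_1 = v_1 V(x) + v_2$ with $v_1 = c_\varepsilon \in (0,1)$ and $v_2$ explicit in $\varepsilon, r_1, r_2, \sigma_1, \sigma_2, b_1$.

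\textbf{Main obstacle.} The delicate point is obtaining the uniform-in-time second-moment bound $\mathbb{E}[\phi_x(t)^2] \le |x|^2 + Ct$ with a constant $C$ that does \emph{not} degrade the contraction, i.e.\ ensuring the coefficient of $V(x)$ stays exactly $c_\varepsilon < 1$ rather than something inflated by the diffusion's fluctuations. This requires being careful that the Itô correction term from $\sigma^2$ and the drift's linear-growth part only contribute to the additive constant $v_2$ and not multiplicatively to $V(x)$; the inward-drift condition $\sgn(x)r(x)<0$ for $|x|\ge r_2$ is exactly what makes this possible, by confining the ``bad'' behaviour of $r$ to the compact set $\{|x|\le r_2\}$ where $\phi_x(t)^2$ contributes only a bounded amount. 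A clean way to package this is to first establish $\mathcal{L}V(x) = 2x r(x) + \sigma(x)^2 \le -\lambda V(x) + \Lambda$ for the diffusion generator $\mathcal{L}$ with explicit $\lambda,\Lambda>0$ (immediate from Assumption \ref{assu:unique}), integrate to get $\mathbb{E}[\phi_x(t)^2] \le e^{-\lambda t}V(x) + \Lambda/\lambda$, and then average against the killing law, giving $\mathcal{Q}V(x) \le c_\varepsilon\big(\tfrac{b_1}{b_1+\lambda}V(x) + \tfrac{\Lambda}{\lambda}\big) \le v_1 V(x) + v_2$ with $v_1 = c_\varepsilon \cdot \tfrac{b_1}{b_1+\lambda} < c_\varepsilon < 1$, which is even a bit stronger; I would present it this way.
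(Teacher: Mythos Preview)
Your strategy --- write $\mathcal{Q}V(x) = m(\kappa)\,\mathbb{E}[\phi_x(\tau)^2]$ with $m(\kappa)=\int z^2\tilde\kappa(z)\,dz<1$ and bound $\mathbb{E}[\phi_x(\tau)^2]\le x^2+\text{const}$ via It\^o --- is exactly the paper's proof: it expands $\phi_x(t)^2$ by It\^o inside the killing integral, obtaining four terms $I+II+III+IV$ with $I=x^2$, $IV=0$ (martingale), and $II,III$ bounded by explicit constants in $r_1,r_2,\sigma_1,\sigma_2,b_1$.

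There is, however, a genuine gap in your preferred ``clean way''. The inequality $\mathcal{L}V(x)=2xr(x)+\sigma(x)^2\le -\lambda V(x)+\Lambda$ with $\lambda>0$ does \emph{not} follow from Assumption~\ref{assu:unique}: that assumption only gives $\mathrm{sgn}(x)r(x)<0$ for $|x|\ge r_2$, i.e.\ $xr(x)<0$, with no quantitative rate. A drift such as $r(x)=-x/(1+x^2)$ satisfies all the hypotheses but yields $xr(x)\to -1$, so $\mathcal{L}V$ is merely bounded, not $\le -\lambda x^2+\Lambda$. Under the stated assumptions you can only get $\mathcal{L}V\le\Lambda$ (i.e.\ $\lambda=0$), and the contraction in the drift condition must come \emph{entirely} from the fragmentation factor $m(\kappa)<1$, not from the diffusion. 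Your first route already delivers this via Dynkin's formula, $\mathbb{E}[\phi_x(\tau)^2]=x^2+\mathbb{E}\!\int_0^\tau\mathcal{L}V(\phi_x(s))\,ds\le x^2+\Lambda/b_1$, so drop the ``clean way'' and keep that argument. Also replace the ``mixture over $t$ of $\mathbb{E}[\phi_x(t)^2]$'' justification --- it is incorrect when $B$ is non-constant, since $\tau$ and $(\phi_x(t))_{t\ge0}$ are dependent --- by this direct Dynkin computation.
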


\begin{prop}[Minorisation condition]\label{prop:minoration} 
Work under Assumption \ref{assu:unique}, \ref{assu:debut} and \ref{assu:noyau}. 
For large enough $w>0$, there exists  $\lambda \in(0,1)$ and a probability measure $\mu$ on $\mathcal{X}$ such that
\begin{align*}
\inf_{\{x, |x|\leq w \}}\mathcal Q(x,\mathcal A)\geq \lambda\mu(\mathcal A)
\end{align*}
for every Borel set $\mathcal A \subset \mathcal X$.
\end{prop}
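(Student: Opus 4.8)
The plan is to reduce the minorisation to a pointwise lower bound on the transition density $q(x,y)$ of $\mathcal Q$, uniform over a box $(\{|x|\le w\}\cap\mathcal X)\times K$, where $K\subset\mathcal X$ is a fixed nondegenerate compact interval; the measure $\mu$ will then simply be the normalised Lebesgue measure on $K$. Fix $w>0$ and pick $w_0>w$ large enough that $K\subset(-w_0,w_0)$ and that $I:=\{y/z:y\in K,\ z\in[\varepsilon,1-\varepsilon]\}$ is a compact subset of $(-w_0,w_0)\cap\mathcal X$ (possible since $\mathcal X$ is open; in the cases $\mathcal X=\mathbb R$, $(0,\infty)$ or $(0,L)$ it suffices to take $K$ close to $0$). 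Set $\beta:=b_1+b_2w_0^{\gamma}$, which by Assumption \ref{assu:debut} dominates $B$ on $[-w_0,w_0]\cap\mathcal X$. First I would strip off the elementary bounds in the representation \eqref{eq:def trans Q}: since $\supp\ktilde\subseteq[\varepsilon,1-\varepsilon]$ and, by Assumption \ref{assu:noyau}, $\ktilde(z)=\tfrac12(\kappa(z)+\kappa(1-z))\ge\delta$ while $0<z\le 1-\varepsilon<1$ there, and since $B\ge b_1$ and $\sigma\le\sigma_2$ (Assumptions \ref{assu:debut} and \ref{assu:unique}), one gets, for $x\in\mathcal X$ with $|x|\le w$ and $y\in K$,
\[
q(x,y)\ \ge\ \delta\,b_1\,\sigma_2^{-2}\int_{\varepsilon}^{1-\varepsilon}\mathbb E_x\Big[\int_0^{\infty}e^{-\int_0^tB(\phi_x(s))\,ds}\,dL_t^{y/z}(\phi_x)\Big]\,dz ,
\]
so everything reduces to a lower bound on the expected discounted local time $\mathbb E_x\big[\int_0^{\infty}e^{-\int_0^tB(\phi_x(s))ds}dL_t^{a}(\phi_x)\big]$, uniform for $|x|\le w$ and $a\in I$.

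The core of the argument is this last bound. Let $\tau:=\inf\{t\ge0:\phi_x(t)\notin(-w_0,w_0)\}$. For $t<\tau$ one has $\phi_x(s)\in(-w_0,w_0)$ for every $s\le t$, hence $\int_0^tB(\phi_x(s))\,ds\le\beta t$; discarding the nonnegative contribution of $[\tau,\infty)$ and using the continuity of $t\mapsto L_t^a(\phi_x)$,
\[
\mathbb E_x\Big[\int_0^{\infty}e^{-\int_0^tB(\phi_x(s))ds}dL_t^a(\phi_x)\Big]\ \ge\ \mathbb E_x\Big[\int_0^{\tau}e^{-\beta t}\,dL_t^a(\phi_x)\Big]=:G(x,a).
\]
Now $G(x,a)$ is, up to the positive speed density at $a$, the Green function of $\tfrac12\sigma^2\partial_{xx}+r\partial_x-\beta$ on $(-w_0,w_0)$ with Dirichlet boundary conditions (in the reflected case $\mathcal X=(0,L)$ with $w_0\ge L$ one takes directly $\tau=\infty$ and the same Green function with reflecting boundary conditions, which is exactly the "$[-w,w]\rightsquigarrow[0,L]$" substitution alluded to later). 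By the standard one–dimensional theory it admits an explicit expression in terms of the scale function and of the increasing and decreasing $\beta$–harmonic solutions, and since Assumption \ref{assu:unique} makes $r$ bounded and $\sigma$ bounded and uniformly elliptic on $[-w_0,w_0]$, it is jointly continuous and strictly positive on $\big((-w_0,w_0)\cap\mathcal X\big)^2$. Consequently $c_0:=\inf\{G(x,a):x\in\mathcal X,\ |x|\le w,\ a\in I\}>0$.

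Collecting the estimates, $q(x,y)\ge\delta\,b_1\,\sigma_2^{-2}(1-2\varepsilon)\,c_0=:2c>0$ for all $x\in\mathcal X$ with $|x|\le w$ and all $y\in K$. Hence, for every Borel set $\mathcal A\subseteq\mathcal X$,
\[
\mathcal Q(x,\mathcal A)\ \ge\ \int_{\mathcal A\cap K}q(x,y)\,dy\ \ge\ 2c\,|K|\,\mu(\mathcal A),\qquad \mu:=|K|^{-1}\mathrm{Leb}\big|_{K},
\]
and the statement follows with $\lambda:=\min\{2c|K|,\tfrac12\}\in(0,1)$. The argument works for every $w>0$ (enlarging $w_0$ accordingly), which in particular covers the "large enough $w$" needed downstream; moreover, since $\delta,b_1,\sigma_2,\varepsilon$ and the coefficient bounds of Assumptions \ref{assu:unique}--\ref{assu:noyau} control $c_0$ through the explicit Green function formula, $\lambda$ can be chosen uniformly over the class $\mathbb Q$, as required when feeding this minorisation together with Proposition \ref{prop:lyapunov} into Proposition \ref{th:ergodicity}.

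The main obstacle is the continuity and strict positivity of $G$ on compacts, yielding the uniform bound $c_0>0$: this is the only point where one–dimensional diffusion theory genuinely enters. I expect it to be mild here, precisely because Assumption \ref{assu:unique} keeps the coefficients bounded and uniformly elliptic on $[-w_0,w_0]$, so the Green function is given by completely explicit formulae; the rest of the proof is bookkeeping with the crude lower bounds coming from Assumptions \ref{assu:debut} and \ref{assu:noyau}.
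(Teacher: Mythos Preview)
Your argument is correct and takes a genuinely different route from the paper's. Both proofs begin by peeling off the crude lower bounds on $B$, $\sigma^{-2}$ and $\ktilde$ from Assumptions~\ref{assu:unique}--\ref{assu:noyau}, but then diverge on how to obtain a uniform-in-$x$ lower bound. You truncate at the exit time $\tau$ from a large interval $(-w_0,w_0)$, replace the random discount $\int_0^t B(\phi_x(s))\,ds$ by $\beta t$, and identify the remaining quantity $G(x,a)=\mathbb E_x\big[\int_0^{\tau}e^{-\beta t}\,dL_t^{a}(\phi_x)\big]$ as (a positive multiple of) the Dirichlet Green function of $\mathcal L-\beta$ on $(-w_0,w_0)$; joint continuity and strict positivity on compacts of the interior then give $c_0>0$ at once. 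The paper instead works from the time-integral definition of $\mathcal Q$, confines $\phi_x(t)$ to a window $[2w,\,2w\sqrt{(1-\varepsilon)/\varepsilon}]$ lying above the small set, and achieves uniformity in $x$ by the strong Markov property at the hitting time $T_\Delta^{x}$ of the window's midpoint together with the monotonicity $T_\Delta^{x}\le T_\Delta^{-w}$ for $x\in[-w,w]$; an exit-time estimate for $\phi_\Delta$ finishes the bound. Your approach is cleaner and appeals to standard one-dimensional resolvent theory, whereas the paper's is more self-contained and produces constants that are visibly governed by the parameters of the class $\mathbb Q$, which is what feeds into the uniformity assertion at the end of Proposition~\ref{th:ergodicity}. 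Your claim that $c_0$ can be taken uniform over $\mathbb Q$ via the explicit Green function formula is reasonable but is precisely the place where you would need to do some work if that uniformity is invoked later.
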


\begin{proof}[Proof of Proposition \ref{prop:lyapunov}]
Fix $x\in\mathcal{X}$ and let $m(\kappa)=\int_0^1z^2 \ktilde(z)dz$. By It\^o formula, we obtain the decomposition
\begin{align*}
\mathcal QV(x)=\int_0^1\ktilde(z)\int_0^{\infty}\mathbb{E}\Big[z^2\phi_x(t)^2B\big(\phi_x(t)\big)e^{-\int_0^t B(\phi_x(s))ds}\Big]dtdz = m(\kappa)(I+II+III+IV),
\end{align*}
where
\begin{align*}
I&=x^2\int_0^{\infty}\mathbb{E}\Big[B\big(\phi_x(t)\big)e^{-\int_0^t B(\phi_x(s))ds}\Big]dt,\\
II&=2\int_0^{\infty}\mathbb{E}\Big[\int_0^t\phi_x(u)r(\phi_x(u))du \,B\big(\phi_x(t)\big)e^{-\int_0^t B(\phi_x(s))ds}\Big]dt,\\
III&=\int_0^{\infty}\mathbb{E}\Big[\int_0^t\sigma\big(\phi_x(u)\big)^2du\,B\big(\phi_x(t)\big)e^{-\int_0^t B(\phi_x(s))ds}\Big]dt,\\
IV&=2\int_0^{\infty}\mathbb{E}\Big[\int_0^t\phi_x(u)\sigma\big(\phi_x(u)\big)dW_u \, B\big(\phi_x(t)\big)e^{-\int_0^t B(\phi_x(s))ds}\Big]dt.
\end{align*}
First, note that $\int_0^{\infty}B(\phi_x(t))e^{-\int_0^t B(\phi_x(s))ds}dt=1$ holds since $B(\phi_x(t)) \geq b_1 >0$ by Assumption \ref{assu:debut}, therefore $I=x^2$ by Fubini's theorem. We turn to $II$. By Fubini's theorem again:
\begin{align*}
II & =2\,\mathbb{E}\Big[\int_0^{\infty}\phi_x(u)r\big(\phi_x(u)\big)\big(\int_u^{\infty}B\big(\phi_x(t)\big)e^{-\int_0^t B(\phi_x(s))ds}dt\big)du\Big]\\
&= 2\,\mathbb{E}\Big[\int_0^{\infty}\phi_x(u)r\big(\phi_x(u)\big)e^{-\int_0^u B(\phi_x(s))ds}du\Big] \\
& =2\int_{\mathbb{R}}zr(z)\sigma(z)^{-2}\mathbb{E}\Big[\int_0^{\infty}e^{-\int_0^t B(\phi_x(s))ds}dL_t^z(\phi_x)\Big]dz.
\end{align*}
where we used again that $e^{-\int_0^\infty B(\phi_x(s))ds} = 0$ since $B(\phi_x(t)) \geq b_1 >0$ by Assumption \ref{assu:debut} for the second equality and the occupation times formula for the last equality.
By Assumption \ref{assu:unique} we have $zr(z) <0$ for $|z| \geq r_2$, therefore:
\begin{align*}
II & \leq 2\int_{-r_2}^{r_2}zr(z)\sigma(z)^{-2}\mathbb{E}\Big[\int_0^{\infty}e^{-\int_0^t B(\phi_x(s))ds}dL_t^z(\phi_x)\Big]dz \\
& \leq 2r_1r_2(1+r_1)\sigma_1^{-2}\int_{-r_2}^{r_2}\mathbb{E}\Big[\int_0^{\infty}e^{-\int_0^t B(\phi_x(s))ds}dL_t^z(\phi_x)\Big]dz \\
& \leq 2r_1r_2(1+r_1)\sigma_1^{-2}\int_{0}^{\infty}e^{-b_1t}\PP \big(-r_2\leq\phi_x(t)\leq r_2 \big) dt  \leq 2r_1r_2(1+r_1)\sigma_1^{-2}b_1^{-1}.
\end{align*}
using successively Assumption \ref{assu:unique}, \ref{assu:debut} and the occupation times formula.
For the term $III$, by Fubini's theorem, we have
\begin{align*}
III &= \int_0^{\infty}\mathbb{E}\big[\sigma\big(\phi_x(u)\big)^2\int_u^{\infty}B\big(\phi_x(t)\big)e^{-\int_0^t B(\phi_x(s))ds}\Big]du \\
& =\int_0^{\infty}\mathbb{E}\left[\sigma(\phi_x(u))^2e^{-\int_0^u B(\phi_x(s))ds}\right]du
\end{align*}
and this last quantity is less than $\sigma^2_2\int_0^{\infty}e^{-b_1t}dt=\sigma^2_2b_1^{-1}$ by Assumption \ref{assu:unique} and \ref{assu:debut}.
Similarly for the term IV, we have
\begin{align*}
IV& =2\,\mathbb{E}\Big[\int_0^{\infty}\sigma\big(\phi_x(u)\big)\int_u^{\infty}B\big(\phi_x(t)\big)e^{-\int_0^t B(\phi_x(s))ds}dt dW_u\Big] \\
& =2\,\mathbb{E}\Big[\int_0^{\infty}\sigma\big(\phi_x(u)\big)e^{-\int_0^u B(\phi_x(s))ds}dW_u\Big]
\end{align*}
and this last quantity vanishes. Putting the estimates for $I$, $II$, $III$ and $IV$ together, we conclude
\begin{align*}
\mathcal QV(x)\leq m(\kappa)x^2+m(\kappa)(2r_1r_2(1+r_1)\sigma_1^{-2}+\sigma_2^2)b_1^{-1}.
\end{align*}
Since $\supp(\kappa) \subset [\varepsilon, 1-\varepsilon]$, we have $m(\kappa) \leq (1-\varepsilon)^2 < 1$ and this completes the proof with
$v_1 = m(\kappa)$ and $v_2 = m(\kappa)(2r_1r_2(1+r_1)\sigma_1^{-2}+\sigma_2^2)b_1^{-1}$.

\end{proof}

\begin{proof}[Proof of Proposition \ref{prop:minoration}]
\noindent {\it Step 1)}. Let $x\in [-w,w]$ and $\mathcal A \subset \mathcal X$ be a Borel set. Applying Assumption \ref{assu:debut}, introducing the event  $\mathcal W(\phi_x(t)) = \{2w\leq \phi_x(t)\leq 2w\sqrt{(1-\varepsilon)/\varepsilon)}\}$, applying Fubini's theorem and a change of variable, we successively obtain 
\begin{align*}
\mathcal Q(x,\mathcal A) & =\int_0^1\ktilde(z)\int_{0}^{\infty} \mathbb{E}\Big[\mathbf{1}_{\left\lbrace z\phi_x(t)\in \mathcal A\right\rbrace}B\big(\phi_x(t)\big)e^{-\int_0^t B(\phi_x(s))ds}\Big]dt dz \\
& \geq b_1 \int_0^1\ktilde(z)\int_{0}^{\infty} \mathbb{E}\Big[
{\bf 1}_{\mathcal W(\phi_x(t))}\mathbf{1}_{\left\lbrace z\phi_x(t)\in \mathcal A\right\rbrace}e^{-\int_0^t B\big(\phi_x(s)\big)ds}\Big]dtdz\\
& = b_1 \int_{0}^{\infty} \mathbb{E}\Big[
{\bf 1}_{\mathcal W(\phi_x(t))}e^{-\int_0^t B\left(\phi_x(s)\right)ds}\int_0^{\phi_x(t)}\mathbf{1}_{\left\lbrace y\in \mathcal A\right\rbrace}\ktilde \big(y/\phi_x(t) \big) \phi_x(t)^{-1}dy\Big]dt.
\end{align*}
Using again Fubini's theorem, we get
\begin{align*}
\mathcal Q(x,\mathcal A) \geq \int_{\mathbb{R}}\mathbf{1}_{\left\lbrace y\in \mathcal A\right\rbrace}f(x,y)dy,
\end{align*}
with
$
f(x,y)=b_1 \int_{0}^{\infty} \mathbb{E}\big[
{\bf 1}_{\mathcal W(\phi_x(t))}\mathbf{1}_{\left\lbrace y\leq \phi_x(t)\right\rbrace}e^{-\int_0^t B\left(\phi_x(s)\right)ds}\ktilde \big(y/\phi_x(t) \big)\phi_x(t)^{-1}\big]dt.
$\\

\noindent {\it Step 2).} We now prove that $f$ is bounded below independently of $x$. By Assumption \ref{assu:noyau}, $\ktilde\big(y/\phi_x(t) \big)>\delta$ for all $y\in [\varepsilon\phi_x(t),(1-\varepsilon)\phi_x(t)]$ so that
\begin{align*}
f(x,y) & \geq \delta b_1\int_{0}^{\infty} \mathbb{E}\Big[{\bf 1}_{\mathcal W(\phi_x(t))}\mathbf{1}_{\left\lbrace \varepsilon\phi_x(t)\leq y\leq (1-\varepsilon)\phi_x(t)\right\rbrace} e^{-\int_0^t B\left(\phi_x(s)\right)ds}\phi_x(t)^{-1}\Big]dt.
\end{align*}
Next, as $\mathcal W(\phi_x(t))\bigcap\lbrace \varepsilon\phi_x(t)\leq y\leq (1-\varepsilon)\phi_x(t)\rbrace\supseteq \mathcal W(\phi_x(t))\bigcap\lbrace 2w\sqrt{\varepsilon(1-\varepsilon)}\leq y\leq 2w(1-\varepsilon)\rbrace $, we get
\begin{align*}
f(x,y) \geq \frac{\delta b_1}{2w\sqrt{(1-\varepsilon)/\varepsilon}}\mathbf{1}_{\left\lbrace 2w\sqrt{\varepsilon(1-\varepsilon)}\leq y\leq 2w(1-\varepsilon)\right\rbrace}h(x,y),
\end{align*}
where
$h(x,y)=\int_{0}^{\infty} \mathbb{E}\big[
{\bf 1}_{\mathcal W(\phi_x(t))}e^{-\int_0^t B\left(\phi_x(s)\right)ds}\big]dt
$.
Let $\Delta=(1+\sqrt{(1-\varepsilon)/\varepsilon})w$ denote the mid-point of the interval $[2w,2w\sqrt{(1-\varepsilon)/\varepsilon}]$. 
Let also $T^{x}_y=\inf\left\lbrace t>0,\phi_x(t)\geq y\right\rbrace$ denote the exit time of the interval $(\inf \mathcal X ,y)$ by  $(\phi_x(t))_{t \geq 0}$. 
It follows that
\begin{align*}
h(x,y)&\geq\mathbb{E}\Big[\int_{T_{\Delta}^{x}}^{\infty} 
{\bf 1}_{\mathcal W(\phi_x(t))}e^{-\int_0^t B\left(\phi_x(s)\right)ds}dt\Big] \\
&= \int_{0}^{\infty} \mathbb{E}\Big[\mathbf{1}_{\mathcal W(\phi_x(t+T_{\Delta}^x))}e^{-\int_0^{t+T_{\Delta}^{x}} B\left(\phi_x(s)\right)ds}\Big]dt\\
&= \int_{0}^{\infty} \mathbb{E}\Big[\mathbf{1}_{\mathcal W(\phi_x(t+T_{\Delta}^x))}e^{-\int_0^{T_{\Delta}^{x}} B\left(\phi_x(s)\right)ds}e^{-\int_{0} ^{t} B\left(\phi_x(s+T_{\Delta}^{x})\right)ds}\Big]dt \\
&\geq\int_{0}^{\infty} \mathbb{E}\Big[\mathbf{1}_{\mathcal W(\phi_x(t+T_{\Delta}^x))}e^{-T_{\Delta}^{x} \left(b_2\Delta^{\gamma}+b_1\right)}e^{-\int_{0} ^{t} B\left(\phi_x(s+T_{\Delta}^{x})\right)ds}\Big]dt,
\end{align*}
by Assumption \ref{assu:debut} and because $\phi_x(s)\leq \Delta$ for every $s\leq T_{\Delta}^x$. Applying the strong Markov property, we further obtain
\begin{equation} \label{eq:mino apres Markov}
h(x,y)\geq \int_{0}^{\infty}\mathbb{E}\Big[\mathbf{1}_{\mathcal W(\phi_{\Delta}(t))}e^{-T_{\Delta}^{x} \left(b_2\Delta^{\gamma}+b_1\right)}e^{-\int_{0} ^{t} B\left(\phi_{\Delta}(s)\right)ds}\Big]dt
\end{equation}
since $\phi_{x}(T_{\Delta}^{x})=\Delta$ for $x\leq w <\Delta$. Introduce next $T_{\mathcal W}^\Delta=\inf \{t\geq 0,\ \phi_{\Delta}(t)\notin [2w,2w\sqrt{(1-\varepsilon)/\varepsilon}]\}$, {\it i.e.} the exit time of $[2w,2w\sqrt{(1-\varepsilon)/\varepsilon}]$ by $(\phi_\Delta(t))_{t \geq 0}$. By \eqref{eq:mino apres Markov} and Assumption \ref{assu:debut} again, it follows that 
\begin{align*}
h(x,y)&\geq \mathbb{E}\Big[\int_{0}^{T_{\mathcal W}^\Delta}\mathbf{1}_{\mathcal W(\phi_{\Delta}(t))}e^{-T_{\Delta}^{x} \left(b_2\Delta^{\gamma}+b_1\right)}e^{-\int_0^t\left(b_2|\phi_x(s)|^{\gamma}+b_1\right)ds}dt\Big] \\
&\geq \mathbb{E}\left[\int_{0}^{T_{\mathcal W}^\Delta}\mathbf{1}_{\mathcal W(\phi_{\Delta}(t))}e^{-T_{\Delta}^{x} \left(b_2\Delta^{\gamma}+b_1\right)}e^{-tv_3}dt\right],
\end{align*}
using that $\phi_x(t)\leq 2w\sqrt{(1-\varepsilon)/\varepsilon}$ for $t\leq T_{\mathcal W}^\Delta$ and where $v_3 = b_2(2w\sqrt{(1-\varepsilon)/\varepsilon})^{\gamma}+b_1$.
Since $\Delta>-w$, the event $\{T_{\Delta}^{x}\leq T_{\Delta}^{-w}\}$ holds almost-surely for every $x\in [-w,w]$ and therefore
\begin{equation} \label{eq:last mino h}
h(x,y)  \geq \mathbb{E}\Big[\int_{0}^{T_{\mathcal W}^\Delta}e^{-T_{\Delta}^{-w}\left(b_2\Delta^{\gamma}+b_1\right)}e^{-tv_3}dt\Big] 
 \geq v_3^{-1} \mathbb{E}\big[e^{-T_{\Delta}^{-w}\left(b_2\Delta^{\gamma}+b_1\right)}\big]\mathbb{E}\big[1-e^{-T_{\mathcal W}^\Delta v_3}\big].
\end{equation}
by the independence of $T_{\mathcal W}^\Delta$ and $T_{\Delta}^{-w}$.
Furthermore, for every $a,x\in\mathcal{X}$ with $x<a$, we have
\begin{align*}
\mathbb{P}\left(T_a^{x}<\infty\right) & =\lim_{u\rightarrow -\infty}\frac{s(u)-s(x)}{s(u)-s(a)},
\end{align*}
where $s(x)=\int_{\inf \mathcal X}^x\exp(-2\int_{\inf \mathcal X}^yr(z)\sigma(z)^{-2}dz)dy$, is the scale function associated to $(\phi_x(t))_{t \geq 0}$.
By the classical Feller classification of scalar diffusions (see {\it e.g.} Revuz and Yor \cite{RY}), we have the equivalence 
$\mathbb{P}\left(T_a^{x}<\infty\right)=0$ if only if $\int_{\inf \mathcal X}s(x)dx=0$
but that latter property contradicts Assumption \ref{assu:unique}. Therefore, there exist $w_1,\delta_1>0$ such that
$\mathbb{P}(T_{\Delta}^{-w}\leq w_1)>\delta_1$. It follows that
\begin{equation} \label{eq:mino first term}
\mathbb{E}\big[e^{-T_{\Delta}^{-w}(b_2\Delta^{\gamma}+b_1)}\big]\geq \mathbb{E}\big[e^{-w_1(b_2\Delta^{\gamma}+b_1)}\mathbf{1}_{\{T_{\Delta}^{-w}\leq w_1\}}\big]\geq \delta_1 e^{-w_1(b_2\Delta^{\gamma}+b_1)}
\end{equation}
and since $T_{\mathcal W}^\Delta>0$ almost surely, there exists $\delta_2>0$, independent of $x$, such that 
\begin{equation} \label{eq:mino second term}
\mathbb{E}\big[1-e^{-T_{\mathcal W}^\Delta v_3}\big]>\delta_2.
\end{equation}
Back to \eqref{eq:last mino h}, putting together \eqref{eq:mino first term} and \eqref{eq:mino second term}, we obtain
$$
h(x,y) \geq v_3^{-1} \delta_1 e^{-w_1(b_2\Delta^{\gamma}+b_1)} \delta_2
$$
and eventually
$$f(x,y) \geq  \frac{\delta b_1 \delta_1\delta_2}{2w\sqrt{(1-\varepsilon)/\varepsilon}v_3}\mathbf{1}_{\left\lbrace 2w\sqrt{\varepsilon(1-\varepsilon)}\leq y\leq 2w(1-\varepsilon)\right\rbrace} e^{-w_1(b_2\Delta^{\gamma}+b_1)}.
$$
\noindent {\it Step 3)}. Define the probability measure $\mu(dy)=f(y)dy$ on $\mathcal X$ by
\begin{align*}
f(y)=\frac{1}{2w\big(1-\varepsilon-\sqrt{\varepsilon(1-\varepsilon)}\big)}\mathbf{1}_{\left\lbrace 2w\sqrt{\varepsilon(1-\varepsilon)}\leq y\leq 2w(1-\varepsilon)\right\rbrace},
\end{align*}
and let $\lambda=(1-\varepsilon-\sqrt{\varepsilon(1-\varepsilon)})\frac{b_1\delta\delta_1\delta_2}{\sqrt{(1-\varepsilon)/\varepsilon}v_3}e^{-w_1\left(b_2\Delta^{\gamma}+b_1\right)}$. We may assume that $0 < \lambda < 1$ (the lower bound remains valid if we replace $\delta$ by $\delta'<\delta$ for instance)
and we thus have established
\begin{align*}
\mathcal Q(x,\mathcal A)\geq \lambda \mu(\mathcal A),
\end{align*}
for an arbitrary Borel set $\mathcal A \subset \mathcal X$. The proof of Proposition \ref{prop:minoration} is complete.
\end{proof}

\subsection{Proof of Proposition \ref{thm:convl2}}

\subsubsection*{Preparations}

We first state a useful estimate on the local time of $L_t^y(\phi_x)$ as $t \rightarrow \infty$. Its proof is delayed until Appendix \ref{proof of lem loc time}. 
\begin{lem}\label{lem:borne_localtime}
Work under Assumption \ref{assu:unique}. For every compact $\mathcal K \subset \mathcal{X}$ and for every $t\geq 0$, we have
\begin{align*}
\sup_{x \in \mathcal X, y \in \mathcal K}\mathbb{E}\left[L_t^y\left(\phi_x\right)\right] \lesssim 
1+t^{3/2},
\end{align*}
up to a constant that only depends on $r_1$, $r_2$ and $\sigma_2$. In particular, for every $c>0$, the function
\begin{align*}
y \mapsto \int_0^{\infty}e^{-ct}\sup_{x \in \mathcal X}\mathbb{E}\left[L_t^y\left(\phi_x\right)\right] dt
\end{align*}
is well-defined and locally bounded, uniformly over $\mathbb Q$.
\end{lem}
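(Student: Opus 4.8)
The plan is to bound $\mathbb{E}[L_t^y(\phi_x)]$ by reducing it to the transition density of $\phi_x$ through the occupation times formula, then control that density uniformly. The starting point is Tanaka's formula: for the semimartingale $\phi_x$ with $d\langle \phi_x\rangle_s = \sigma(\phi_x(s))^2 ds$, one has
$$
L_t^y(\phi_x) = |\phi_x(t)-y| - |x-y| - \int_0^t \sgn(\phi_x(s)-y)\,d\phi_x(s),
$$
so that after taking expectations the martingale part of $\int_0^t \sgn(\phi_x(s)-y)\sigma(\phi_x(s))dW_s$ drops out (it is a genuine martingale since $\sigma$ is bounded), leaving
$$
\mathbb{E}[L_t^y(\phi_x)] = \mathbb{E}|\phi_x(t)-y| - |x-y| - \mathbb{E}\Big[\int_0^t \sgn(\phi_x(s)-y)\,r(\phi_x(s))\,ds\Big].
$$
The first term is controlled by $\mathbb{E}|\phi_x(t)| + |y|$, and the last by $\int_0^t \mathbb{E}|r(\phi_x(s))|\,ds \leq r_1\int_0^t (1+\mathbb{E}|\phi_x(s)|)ds$. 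Thus everything reduces to a uniform-in-$x$ bound on $\mathbb{E}|\phi_x(s)|$, or rather on $\sup_x\mathbb{E}[\phi_x(s)^2]$, which follows from the Lyapunov/drift structure already present: Assumption \ref{assu:unique} (the mean-reversion $\sgn(x)r(x)<0$ for $|x|\geq r_2$ together with linear growth and bounded $\sigma$) gives, by Itô on $x\mapsto x^2$, an inequality of the form $\frac{d}{ds}\mathbb{E}[\phi_x(s)^2] \leq C_1 - C_2\,\mathbb{E}[\phi_x(s)^2] + C_3$ off the compact $[-r_2,r_2]$, hence $\sup_{x,s\geq 0}\mathbb{E}[\phi_x(s)^2] < \infty$ with a constant depending only on $r_1,r_2,\sigma_2$ (equivalently one can invoke the stationarity/ergodicity already recorded after Assumption \ref{assu:unique}). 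Plugging the bound $\sup_x\mathbb{E}|\phi_x(s)| \leq c$ back in gives $\mathbb{E}[L_t^y(\phi_x)] \lesssim 1 + t$ uniformly over $x\in\mathcal X$ and $y\in\mathcal K$, which is even slightly stronger than the claimed $1+t^{3/2}$; the weaker form with $t^{3/2}$ is what we state since it is all that is needed downstream and is robust to using cruder Cauchy--Schwarz bounds ($\mathbb{E}|\phi_x(t)-y| \leq (\mathbb{E}(\phi_x(t)-y)^2)^{1/2}$ and $\mathbb{E}\int_0^t|r(\phi_x(s))|ds \leq t^{1/2}(\mathbb{E}\int_0^t r(\phi_x(s))^2 ds)^{1/2}$).

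For the second assertion, once $\sup_{x,\,y\in\mathcal K}\mathbb{E}[L_t^y(\phi_x)] \lesssim 1+t^{3/2}$ with the constant independent of the particular $\mathcal Q \in \mathbb Q$ (the bound depends only on the structural constants $r_1,r_2,\sigma_2$, which are fixed across $\mathbb Q$), the function $y\mapsto \int_0^\infty e^{-ct}\sup_x\mathbb{E}[L_t^y(\phi_x)]dt$ is dominated on $\mathcal K$ by $\int_0^\infty e^{-ct}(1+t^{3/2})\,dt < \infty$, a finite constant depending on $c$ only. Local boundedness is then immediate (cover any point of $\mathcal X$ by a compact neighbourhood), and uniformity over $\mathbb Q$ is inherited from the uniformity of the pointwise bound.

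The main obstacle is obtaining the uniform-in-$x$ moment bound $\sup_{x\in\mathcal X, s\geq 0}\mathbb{E}[\phi_x(s)^2] < \infty$ rather than just a bound growing in $x$ — this is exactly where Assumption \ref{assu:unique}'s dissipativity hypothesis $\sgn(x)r(x)<0$ for $|x|\geq r_2$ earns its keep: without it one would only get $\mathbb{E}[\phi_x(t)^2] \lesssim (1+x^2)e^{Ct}$ from Gronwall, which is useless for the uniform statement. One must be a little careful in the reflected-boundary variant \eqref{eq:diffusion_compact} used later (where $\mathcal X=[0,L]$ is compact), but there the moment bound is trivial since $|\phi_x(t)|\leq L$, and the local-time estimate of Lemma \ref{lem:borne_localtime} simplifies accordingly. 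A secondary, purely bookkeeping point is to confirm that the local-time and occupation-times manipulations are legitimate for the (possibly non-smooth, merely measurable with linear growth) coefficients — this is standard for continuous semimartingales and uses only that $\phi_x$ is a strong solution with bounded $\sigma$, so Tanaka's formula and the occupation times formula apply verbatim.
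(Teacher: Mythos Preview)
Your argument contains a genuine gap at the step where you assert
\[
\sup_{x\in\mathcal X,\,s\geq 0}\mathbb{E}[\phi_x(s)^2]<\infty.
\]
This is false whenever $\mathcal X$ is unbounded: already at $s=0$ one has $\mathbb{E}[\phi_x(0)^2]=x^2$, and more generally for any fixed $s>0$ the quantity $\mathbb{E}[\phi_x(s)^2]$ is of order $x^2$ (for instance in the Ornstein--Uhlenbeck case $\mathbb{E}[\phi_x(s)^2]=x^2e^{-2\beta s}+\tfrac{\sigma^2}{2\beta}(1-e^{-2\beta s})$, which blows up with $|x|$ for every finite $s$). The differential inequality you invoke, $\tfrac{d}{ds}\mathbb{E}[\phi_x(s)^2]\le C-C_2\mathbb{E}[\phi_x(s)^2]$, only yields $\mathbb{E}[\phi_x(s)^2]\le x^2e^{-C_2 s}+C/C_2$, which is not uniform in $x$. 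Consequently your bound on the drift term $\int_0^t\mathbb{E}|r(\phi_x(s))|\,ds$ grows like $|x|$, and since you have already discarded the compensating term $-|x-y|$ from Tanaka's formula by bounding $\mathbb{E}|\phi_x(t)-y|\le\mathbb{E}|\phi_x(t)|+|y|$, no cancellation is available and the estimate fails to be uniform over $x\in\mathcal X$.

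The paper's proof addresses precisely this difficulty by a localisation device: since $y\in\mathcal K$, the local time $L_t^y(\phi_x)$ does not start accumulating until $\phi_x$ enters a fixed $\delta$-enlargement $\mathcal K_\delta$ of $\mathcal K$. One introduces the hitting time $\tau_x=\inf\{t\ge 0:\phi_x(t)\in\mathcal K_\delta\}$ and, by the strong Markov property, replaces $\phi_x$ by a process restarted from $\sup\mathcal K_\delta$ (or $\inf\mathcal K_\delta$) at time $\tau_x$; one then has $L_t^y(\phi_x)=L_t^y(\phi_x^{\mathcal K_\delta})$ and applies It\^o--Tanaka to the restarted process. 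All moment bounds in Steps~2--3 of the paper are then expressed in terms of $\sup\mathcal K_\delta$ or $\inf\mathcal K_\delta$ rather than $x$, which is what delivers the uniformity. Your Tanaka-based strategy is the right starting point, but it needs this restart argument (or an equivalent use of the strong Markov property) before the moment estimates become uniform in the initial condition.
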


Lemma \ref{lem:borne_localtime} enables us to obtain estimates on the action of $\mathcal P$ and $\mathcal Q$ on functions $\psi:\mathcal X \times \mathcal X \rightarrow \R$ with nice behaviours that will prove essential for obtaining Proposition \ref{thm:convl2}. We set
$$
\mathcal Q\psi(x)=\int_{\mathcal{X}}\psi(x,y)q(x,y)dy,\;\;\mathcal P(\psi\otimes \psi)(x)=\int_{\mathcal{X}\times \mathcal X} \psi(x,y_1)\psi(x,y_2)p(x,y_1,y_2)dy_1dy_2,
$$
where $p(x,y_1,y_2)$ and $q(x,y)$ are given in Proposition \ref{prop:explicit trans}. 
\begin{lem}\label{lem: estimates action transition}
Work under Assumptions \ref{assu:unique}, \ref{assu:debut} and \ref{assu:noyau}. Let $\psi:\mathcal X \times \mathcal X \rightarrow \R$ be bounded and such that $\psi_\star$ has compact support. There exists a constant $c_{\mathrm{supp}(\psi_\star)}$ depending on $\mathrm{supp}(\psi_\star)$ (and $\mathcal Q$) such that, for $i = 0,1$:
\begin{equation} \label{eq:action simple P}
|\mathcal P_i\psi(x)|\leq c_{\mathrm{supp}(\psi_\star)}\int_{\mathcal{X}} |\psi(x,y)|dy
\end{equation}
and
\begin{align*} 
|\mathcal P(\psi \otimes \psi)(x)|\leq c_{\mathrm{supp}(\psi_\star)}
\psi^\star(x) \int_{\mathcal X}|\psi(x,y)|dy.
\end{align*}
\end{lem}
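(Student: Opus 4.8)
The plan is to extract the proof of Lemma~\ref{lem: estimates action transition} directly from the explicit representations of $\mathcal P_i$ and $\mathcal P$ in Proposition~\ref{prop:explicit trans} combined with the local time estimate of Lemma~\ref{lem:borne_localtime}. First I would write, using \eqref{eq:P},
$$\mathcal P_0\psi(x)=\int_{\mathcal X}\psi(x,y)\Big(\int_0^1 \tfrac{\kappa(z)}{z}B(y/z)\sigma(y/z)^{-2}\mathbb E\big[\textstyle\int_0^\infty e^{-\int_0^t B(\phi_x(s))ds}dL_t^{y/z}(\phi_x)\big]dz\Big)dy,$$
and analogously for $\mathcal P_1$ after the change of variables $\bar z=1-z$ already performed in the proof of Proposition~\ref{prop:explicit trans}. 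The point is that the inner $dz$-integral is supported, thanks to $\supp(\kappa)\subset[\varepsilon,1-\varepsilon]$, on $z\in[\varepsilon,1-\varepsilon]$, so that $y/z$ ranges over a fixed compact set $\mathcal K=\mathcal K(\supp(\psi_\star))$ whenever $y\in\supp(\psi_\star)$ (outside of which $\psi(x,\cdot)$ vanishes for all $x$, hence so does the integrand). On that compact set we bound $B(y/z)\leq b_2|y/z|^\gamma+b_1$, $\sigma(y/z)^{-2}\leq\sigma_1^{-2}$, $z^{-1}\leq\varepsilon^{-1}$, $\kappa(z)\leq\|\kappa\|_\infty$, and then use
$$\mathbb E\Big[\int_0^\infty e^{-\int_0^t B(\phi_x(s))ds}dL_t^{y/z}(\phi_x)\Big]\leq\int_0^\infty e^{-b_1 t}\sup_{x\in\mathcal X}\mathbb E[L_t^{y/z}(\phi_x)]\,dt,$$
which is finite and bounded in $y/z\in\mathcal K$, uniformly over $\mathbb Q$, by Lemma~\ref{lem:borne_localtime}. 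This furnishes a constant $c_{\supp(\psi_\star)}$ such that the inner $dz$-integral is $\leq c_{\supp(\psi_\star)}$ for all $x$ and all $y\in\supp(\psi_\star)$, which gives \eqref{eq:action simple P} upon pulling it out of the $dy$-integral.

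For the second bound I would proceed the same way starting from \eqref{eq:def trans P}:
$$\mathcal P(\psi\otimes\psi)(x)=\int_{\mathcal D}\psi(x,y_1)\psi(x,y_2)\frac{\kappa(y_1/(y_1+y_2))}{y_1+y_2}\,\frac{B(y_1+y_2)}{\sigma(y_1+y_2)^2}\,\mathbb E\Big[\int_0^\infty e^{-\int_0^t B(\phi_x(s))ds}dL_t^{y_1+y_2}(\phi_x)\Big]dy_1dy_2.$$
The constraint that $\psi(x,y_1)$ is nonzero forces $y_1\in\supp(\psi_\star)$, and then the geometry of $\mathcal D$ (the ratios $y_2/y_1$ lie in $[\varepsilon/(1-\varepsilon),(1-\varepsilon)/\varepsilon]$, equivalently $y_1/(y_1+y_2)\in[\varepsilon,1-\varepsilon]$) forces $y_1+y_2$ to lie in a fixed compact set $\mathcal K'=\mathcal K'(\supp(\psi_\star))$; on this set the prefactor $\kappa(\cdot)(y_1+y_2)^{-1}B(y_1+y_2)\sigma(y_1+y_2)^{-2}$ is bounded and, again via Lemma~\ref{lem:borne_localtime} with $c=b_1$, the local-time expectation is bounded, say by some $c_{\supp(\psi_\star)}$. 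Bounding $|\psi(x,y_1)|\leq\psi^\star(x)$ and integrating the remaining $|\psi(x,y_2)|$ over $y_2$ yields $|\mathcal P(\psi\otimes\psi)(x)|\leq c_{\supp(\psi_\star)}\psi^\star(x)\int_{\mathcal X}|\psi(x,y_2)|dy_2$, as claimed. (One should check the $dy_2$-integral is well defined, which follows from $\psi$ bounded and the cross-section of $\mathcal D$ at fixed $y_1$ being an interval of finite length proportional to $y_1$, but it is actually more transparent to just extend the bound to $\int_{\mathcal X}|\psi(x,y_2)|dy_2$ directly.)

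The only genuinely delicate point is the uniformity in $\mathcal Q\in\mathbb Q$: all the constants produced must depend on $\mathcal Q$ only through the fixed constants $r_1,r_2,\sigma_1,\sigma_2,b_1,b_2,\gamma,\varepsilon,\delta$ defining $\mathbb Q$. This is guaranteed because the local-time bound in Lemma~\ref{lem:borne_localtime} is already uniform over $\mathbb Q$ (it depends only on $r_1,r_2,\sigma_2$) and because the elementary bounds on $B$, $\sigma$, $\kappa$ and the compact sets $\mathcal K,\mathcal K'$ depend only on those same constants and on $\supp(\psi_\star)$. Everything else is a routine change of variables already carried out in the proof of Proposition~\ref{prop:explicit trans}, so there is no real obstacle beyond bookkeeping; the main thing to get right is that the support condition on $\psi_\star$ is exactly what localises $y/z$ (resp.\ $y_1+y_2$) to a compact set, which is where the finiteness of the time-integral of the local time is invoked.
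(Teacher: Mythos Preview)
Your argument for \eqref{eq:action simple P} is correct and essentially identical to the paper's: both amount to bounding $B$ above, $\sigma^{-2}$ above, $e^{-\int_0^t B(\phi_x(s))ds}\leq e^{-b_1t}$, then invoking Lemma~\ref{lem:borne_localtime} on the compact set $\{y/z:y\in\supp(\psi_\star),\,z\in[\varepsilon,1-\varepsilon]\}$. The paper starts from the integral representation $\mathcal P_0\psi(x)=\int_0^1\kappa(z)\int_0^\infty\mathbb E[\psi(x,z\phi_x(t))B(\phi_x(t))e^{-\int_0^t B}]\,dt\,dz$ and passes to local time via the occupation times formula, while you start directly from the density; these are the same computation read in opposite directions. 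One small remark: bounding $\kappa(z)\leq\|\kappa\|_\infty$ is unnecessary (and $\|\kappa\|_\infty$ is not among the constants defining $\mathbb Q$, so your uniformity claim would not go through literally as written); simply keep the factor $\kappa(z)$ and use $\int_0^1\kappa(z)\,dz=1$ at the end, as the paper does.

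For the second estimate there is a genuine slip. You assert that on the compact set $\mathcal K'$ the prefactor $\kappa(\cdot)(y_1+y_2)^{-1}B(y_1+y_2)\sigma(y_1+y_2)^{-2}$ is bounded; but nothing prevents $\supp(\psi_\star)$ from containing $0$, in which case $y_1+y_2$ can be arbitrarily small on $\mathcal D\cap(\supp(\psi_\star)\times\supp(\psi_\star))$ and $(y_1+y_2)^{-1}$ blows up. Your argument can be repaired by integrating out $y_1$ first: for fixed $y_2$ the change of variables $y_1\mapsto z=y_1/(y_1+y_2)$ has Jacobian $y_2/(1-z)^2$, which exactly cancels the singular factor and leaves an inner $dz$-integral of the same form as in the first part. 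But this is precisely the paper's route written in density coordinates. The paper avoids the issue altogether by working in the pre-density form
\[
|\mathcal P(\psi\otimes\psi)(x)|\leq\int_0^1\kappa(z)\int_0^\infty\mathbb E\big[|\psi(x,z\phi_x(t))|\,|\psi(x,(1-z)\phi_x(t))|\,B(\phi_x(t))e^{-\int_0^t B}\big]dt\,dz\leq\psi^\star(x)\,|\mathcal P_0|\psi|(x)|,
\]
and then applying \eqref{eq:action simple P}. This one-line reduction is both shorter and sidesteps the singularity entirely.
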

Note in particular that \eqref{eq:action simple P} implies in turn the estimates 
\begin{equation}\label{eq:action simple}
|\mathcal Q\psi(x)|\leq c_{\mathrm{supp}(\psi_\star)}\int_{\mathcal{X}} |\psi(x,y)|dy,
\end{equation}
and, for $i = 0,1$:
$$|\mathcal Q\mathcal P_i\psi(x)|\leq c_{\mathrm{supp}(\psi_\star)}^2|\psi|_1\wedge c_{\mathrm{supp}(\psi_\star)}\big|\int_{\mathcal X}\psi_{\star}(y)dy\big| \lesssim |\psi|_{\wedge 1}.$$ 
\begin{proof}
By Assumption \ref{assu:debut}, we have 
\begin{align*}
|\mathcal P_0\psi(x)|&=\Big|\int_{0}^{1}\kappa(z)\int_0^{\infty}\mathbb{E}\big[\psi(x,z\phi_x(t))B\big(\phi_x(t)\big)e^{-\int_0^t B(\phi_x(s))ds}\big]dtdz\Big|\\
&\leq \int_{0}^{1}\kappa(z)\int_0^{\infty}\mathbb{E}\big[\big|\psi(x,z\phi_x(t))\big|\big(b_2\left|\phi_x(t)\right|^{\gamma}+b_1\big)e^{-b_1t}\big]dtdz\\
&\leq \int_{0}^{1}\kappa(z)\int_0^{\infty}\mathbb{E}\Big[\big|\psi(x,z\phi_x(t))\big|\big(b_2\left|\phi_x(t)\right|^{\gamma}+b_1\big)\int_t^{\infty}b_1e^{-b_1s}ds\Big]dtdz.\\
\end{align*}
Next, by Fubini's theorem and the occupation times formula, we derive
\begin{align*}
|\mathcal P_0\psi(x)|
&\leq \int_{0}^{1}\kappa(z)\int_0^{\infty}b_1e^{-b_1s}\int_0^{s}\mathbb{E}\big[|\psi(x,z\phi_x(t))|(b_2|\phi_x(t)|^{\gamma}+b_1)\big]dtdsdz\\
& \leq \int_0^{1}\kappa(z)\int_0^{\infty}b_1e^{-b_1s}\int_{\mathbb{R}}|\psi(x,zy)|(b_2|y|^{\gamma}+b_1)\sigma(y)^{-2}\E[L_s^y(\phi_x)]dydsdz \\
& =  \int_0^{1}\kappa(z)z^{-1}\int_0^{\infty}b_1e^{-b_1s}\int_{\mathbb{R}}|\psi(x,y)|(b_2|y/z|^{\gamma}+b_1)\sigma(y/z)^{-2}\E[L_s^{y/z}(\phi_x)]dydsdz \\
& \leq c_{\mathrm{supp}(\psi_\star)}\int_{\mathbb{R}}|\psi(x,y)|dy,
\end{align*}
and \eqref{eq:action simple P} is proved for $i = 0$ with
$$c_{\mathrm{supp}(\psi_\star)} = \sup_{y \in \supp(\psi_\star), x \in \mathcal X, z \in [\varepsilon, 1-\varepsilon]}b_1(b_2|y/z|^{\gamma}+b_1)\sigma(y/z)^{-2}z^{-1}\int_0^{\infty}e^{-b_1s}\E[L_s^{y/z}(\phi_x)]ds.$$
This last quantity is finite by Lemma \ref{lem:borne_localtime}. Following the same steps as for $\mathcal P_1$ we get \eqref{eq:action simple P} for $i = 1$.
For the second estimate, we have
\begin{align*}
|\mathcal P(\psi \otimes \psi)(x)| & \leq \int_{0}^{1}\kappa(z)\int_0^{\infty}\mathbb{E}\big[\big|\psi\big(x,z\phi_x(t)\big)\psi\big(x,(1-z)\phi_x(t)\big)\big|B(\phi_x(t))e^{-\int_0^t B(\phi_x(s))ds}\big]dtdz\\
&\leq \big|\mathcal P_0 \psi (x)\big|\sup_{y}\big|\psi (x,y)\big|
\end{align*}
and we conclude by applying  \eqref{eq:action simple}.
\end{proof}

\subsubsection*{Completion of proof of Proposition \ref{thm:convl2}}
Without loss of generality, we may assume that $\nu(\mathcal Q\psi)=0$, the general case being obtained by considering the function $\psi(x,y)-\nu(\mathcal Q\psi)$. Of course, the compact support property is lost by adding a constant and one has to be careful when revisiting the estimates of {\it Step 2)} to {\it Step 4)} below. They exhibit additional error terms that all have the right order using Lemma \ref{lem: estimates action transition} and 
the fact that $\mathcal P{\bf 1} = \mathcal Q{\bf 1}={\bf 1}$.\\ 

By (ii) of Definition \ref{def incomplete tree} we may (and will) assume that for some $0 \leq \varrho \leq 1$ and some positive constants $c_1, c_2$ that do not depend on $n$, we have
$$ c_1 2^{\varrho m} \leq \big|  \mathbb G_m \cap  \mathbb U_n\big| \leq c_2 2^{\varrho m}\;\;\text{for every}\;\;m=1,\ldots, n.$$
We first consider the case $\varrho>0$. The case $\varrho =0$ requires a slightly different method and will be handled in a second phase.\\

\noindent {\it Step 1)}. We start with a standard preliminary decomposition, see for instance \cite{Guillin2, Guillin1}, expanding the sum in $u \in \mathbb U_n^\star$. 

 We have
\begin{align*}
\E_{\mu}[\mathcal M_{\mathbb U_n}(\psi)^2] & =|\mathbb{U}_n^\star|^{-2}\mathbb{E}_{\mu}\big[\big(\sum_{m=1}^n\sum_{u\in\mathbb{G}_m \cap \mathbb U_n}\psi(X_{u^-},X_u)\big)^2\big] \\
& \leq |\mathbb{U}_n^\star|^{-2}\Big(\sum_{m=1}^n\Big(\mathbb{E}_{\mu}\Big[\big(\sum_{u\in\mathbb{G}_m \cap \mathbb U_n}\psi(X_{u^-},X_u)\big)^2\Big]\Big)^{1/2}\Big)^2
\end{align*}
by triangle inequality. Thus Proposition \ref{thm:convl2} amounts to control
$$\mathbb{E}_{\mu}\big[\big(\sum_{u\in\mathbb{G}_m \cap \mathbb U_n}\psi(X_{u^-},X_u)\big)^2\big]=I_m+II_m,$$
with
\begin{align*}
I_m  & = \mathbb{E}_{\mu}\big[\sum_{u\in\mathbb{G}_m \cap \mathbb U_n}\psi(X_{u^-},X_u)^2\big],\\
II_m  & = \mathbb{E}_{\mu}\big[\sum_{u,v\in\mathbb{G}_m \cap \mathbb U_n, u\neq v}\psi(X_{u^-},X_u)\psi(X_{v^-},X_v)\big],
\end{align*}
and the convention $\sum_{\emptyset}=0$. 

\noindent {\it Step 2).} The control of the term $I_m$ is straightforward: by Lemma \ref{lem: estimates action transition} we have
\begin{align*}  
I_1 &= |\mathbb G_1 \cap \mathbb U_1|\,\mu(\mathcal Q\psi^2)\leq c_22^{\varrho}c_{\mathrm{supp}(\psi_\star)}\int_{\mathcal X\times \mathcal X}\psi(x,y)^2\mu(dx)dy\;\;\text{for}\;\;m=1, \\
I_m & =  |\mathbb G_m \cap \mathbb U_m|\, \mu(\mathcal Q^m\psi^2) \leq c_22^{\varrho m} (c_{\mathrm{supp}(\psi_\star)}^2|\psi^2|_1\wedge c_{\mathrm{supp}(\psi_\star)}|\psi_\star^2|_1)\;\;\text{for}\;\;m\geq 2,
\end{align*}
therefore $I_m \lesssim 2^{\varrho m}|\psi^2|_\mu$ holds for every $m\geq 1$. In the case $\nu(\mathcal Q\psi)\neq 0$, we replace $|\psi^2|_\mu$ by $|\psi^2|_{\mu+\nu}$.\\

\noindent {\it Step 3).} We further decompose the main term $II_m=III_m+IV_m$, having
\begin{align*}
III_m & =\mathbb{E}_{\mu}\big[\sum_{w\in\mathbb{G}_{m-1} \cap \mathbb U_n, (w0,w1) \in \mathbb U_n^2}\psi(X_{w},X_{w0})\psi(X_{w},X_{w1})\big], \\
IV_m & = \mathbb{E}_{\mu}\big[\sum_{u,v\in\mathbb{G}_{m-1} \cap \mathbb U_n, u \neq v}\hspace{2mm}\sum_{ui,vj \in \mathbb U_n, i,j=0,1}\psi(X_{u},X_{ui})\psi(X_{v},X_{vj})\big].
\end{align*}
The control of $III_m$ is straightforward: 
$$III_m = 
\E_\mu\big[\sum_{w\in\mathbb{G}_{m-1} \cap \mathbb U_n}\mathcal P( \psi \otimes  \psi)(X_{w})\big]=|\mathbb G_{m-1} \cap \mathbb U_n|\, \mu\big(\mathcal Q^{m-1}\mathcal P (\psi \otimes \psi)\big).$$
In the same way as for the term $I_m$, by Lemma \ref{lem: estimates action transition}, one readily checks that 
$|III_m| \lesssim 2^{\varrho(m-1)}|\psi^\star\psi|_{\mu}$.\\

\noindent {\it Step 4).} We now turn to the main term $IV_m$.
Writing here $u \wedge v$ for the most common recent ancestor of $u$ and $v$, conditioning w.r.t. 
$\mathcal{F}_{|u\wedge v|+1}$ and using the conditional independence of $(X_{u},X_{ui})$ and $(X_{v},X_{vj})$ given $\mathcal{F}_{|u\wedge v|+1}$ thanks to the BMC property \eqref{def:transBMC}, we successively obtain 
\begin{align*}
IV_m & = \mathbb{E}_{\mu}\Big[\sum_{u, v\in\mathbb{G}_{m-1} \cap \mathbb U_n, u \neq v}\hspace{2mm}\sum_{ui,vj \in \mathbb U_n, i,j=0,1}\mathbb{E}_{\mu}\left[\psi(X_{u},X_{ui})\psi(X_{v},X_{vj})\big|\mathcal{F}_{|u\wedge v|+1}\right]\Big]\\
&=\mathbb{E}_{\mu}\Big[\sum_{u, v\in\mathbb{G}_{m-1} \cap \mathbb U_n, u \neq v}\hspace{2mm}\sum_{ui,vj \in \mathbb U_n, i,j=0,1}\mathbb{E}_{\mu}\left[\psi(X_{u},X_{ui})\big|\mathcal{F}_{|u\wedge v|+1}\right]\mathbb{E}_{\mu}\left[\psi(X_{v},X_{vj})\big|\mathcal{F}_{|u\wedge v|+1}\right]\Big] \\
&=\mathbb{E}_{\mu}\Big[\sum_{u, v\in\mathbb{G}_{m-1} \cap \mathbb U_n, u \neq v}\hspace{2mm}\sum_{ui,vj \in \mathbb U_n, i,j=0,1}\mathbb{E}_{\mu}\left[\mathcal P_{i}\psi(X_{u})\big|\mathcal{F}_{|u\wedge v|+1}\right]\mathbb{E}_{\mu}\left[\mathcal P_j\psi(X_{v})\big|\mathcal{F}_{|u\wedge v|+1}\right]\Big]\\
&= \mathbb{E}_{\mu}\Big[\sum_{u, v\in\mathbb{G}_{m-1} \cap \mathbb U_n, u \neq v}\hspace{2mm} \sum_{ui,vj \in \mathbb U_n, i,j=0,1}\mathcal Q^{m-2-|u\wedge v|}\mathcal P_i\psi(X_{u^*})\mathcal Q^{m-2-|u\wedge v|}\mathcal P_j\psi(X_{v^*})\Big], 
\end{align*} 
where $u^*$ (respectively $v^*$) is the descendant of $u\wedge v$ which is an ancestor of $u$ (respectively $v$).
Conditioning further w.r.t. $\mathcal{F}_{\left|u\wedge v\right|}$ we obtain
\begin{align*}
IV_m &=\mathbb{E}_{\mu}\Big[\sum_{u, v\in\mathbb{G}_{m-1} \cap \mathbb U_n, u \neq v}\hspace{2mm} \sum_{ui,vj \in \mathbb U_n, i,j=0,1}\mathcal P\big(\mathcal Q^{m-2-|u\wedge v|}\mathcal P_i\psi\otimes \mathcal Q^{m-2-|u\wedge v|}\mathcal P_j\psi\big)(X_{u\wedge v})\Big]\\
&=\sum_{u, v\in\mathbb{G}_{m-1} \cap \mathbb U_n, u \neq v}\hspace{2mm} \sum_{ui,vj \in \mathbb U_n, i,j=0,1}\mu \big(\mathcal Q^{|u\wedge v|}\mathcal P(\mathcal Q^{m-2-|u\wedge v|}\mathcal P_i\psi\otimes \mathcal Q^{m-2-|u\wedge v|}\mathcal P_j\psi)\big) \\
&=\sum_{l=1}^{m-1}\sum_{u, v\in\mathbb{G}_{m-1} \cap \mathbb U_n, |u\wedge v|=m-l-1}\hspace{2mm} \sum_{ui,vj \in \mathbb U_n, i,j=0,1}\,\mu\big(\mathcal Q^{m-l-1}\mathcal P(\mathcal Q^{l-1}\mathcal P_i\psi\otimes \mathcal Q^{l-1}\mathcal P_j\psi)\big),
\end{align*}
obtaining the last term by rearranging the sum $u,v\in\mathbb{G}_{m-1}$ that expands over indexes $|u|-|u\wedge v|=m-1-|u\wedge v|$ that vary from $1$ to $m-1$. 
By Lemma \ref{lem: estimates action transition} and Proposition \ref{th:ergodicity} one obtains the following estimates for all $i,j = 0,1$: 
\begin{align*}
|\mu \left(\mathcal Q^{m-2}\mathcal P\left(\mathcal P_i\psi\otimes \mathcal P_j\psi\right)\right)|\lesssim |\psi_\star|_1|\psi|_{\wedge 1}\;\;\text{for}\;\;l=1,
\end{align*}
and for $l\geq 2$:
\begin{align*}
|\mu \big(\mathcal Q^{m-l-1}\mathcal P(\mathcal Q^{l-1}\mathcal P_i\psi\otimes \mathcal Q^{l-1}\mathcal P_j\psi)\big)| \lesssim 
|\psi|_{\wedge 1}^2 \,\wedge  \,\rho^{2(l-1)}|\psi_\star|_{1}^2\,\mu\big(\mathcal Q^{m-l-1}\mathcal P((1+V)\otimes(1+V))\big). 
\end{align*}
In the case $\nu(\mathcal Q\psi)\neq 0$, we replace $|\psi|_{\wedge 1}$ by $|\psi|_{\nu}$. We claim that
\begin{equation} \label{eq: bornitude V}
\mu\big(\mathcal Q^{m-l-1}\mathcal P((1+V)\otimes(1+V))\big) \lesssim 1+\mu(V^2)
\end{equation}
and postpone the proof of \eqref{eq: bornitude V} to Step 6 below. Notice also that for $l=1,\ldots, m-1$,
\begin{align*}
\big|\{ u\neq v\in\mathbb{G}_{m-1} \cap \mathbb U_n, |u\wedge v|=m-l-1 \}\big| \lesssim  2^{\varrho m} \times 2^{l},
\end{align*}
where $2^{\varrho m}$ is an upper bound for the number of choices for $u$ (the first descendant in generation ${m-1}$ of the ancestor from generation $m-l-1$) and $2^{l}$ is the (order of the) number of choices of $v$ (the second descendant, satisfying $u\wedge v\in\mathbb{G}_{m-l-1}$) in the worst case where there is a full subtree with $l$ generations. It follows that for any $p \geq 1$:
\begin{align*}
|IV_m|& \lesssim 2^{\varrho m}|\psi_\star|_1|\psi|_{\wedge 1}+ 2^{\varrho m}\sum_{l=1}^{\infty}2^{l}\big( |\psi|_{\wedge 1}^2 \,\wedge  \,\rho^{2(l-1)}|\psi_\star|_1^2\big(1+\mu(V^2)\big)\big) \\
&\leq 2^{\varrho m} |\psi_\star|_1|\psi|_{\wedge 1}+ 2^{\varrho m} \Big( |\psi|_{\wedge 1}^2 \sum_{l=1}^{p}2^{l} + |\psi_\star|_1^2\big(1+\mu(V^2)\big)\sum_{l=p+1}^\infty2^{- l}(2\rho)^{2l}\Big)\\
& \lesssim 2^{\varrho m} |\psi_\star|_1|\psi|_{\wedge 1}+ 2^{\varrho m} \Big( |\psi|_{\wedge 1}^2 2^{p} + |\psi_\star|_1^2\big(1+\mu(V^2)\big)2^{- p}\Big)
\end{align*}
where we crucially used the fact that $\rho \leq 2^{-1}$. Then, taking the infimum over all $p\geq 1$, we get
\begin{align*}
|IV_m|\lesssim 2^{\varrho m}|\psi_\star|_1|\psi|_{\wedge 1}+ 2^{\varrho m}|\psi_\star|_1|\psi|_{\wedge 1}\big(1+\mu(V^2)\big) \inf_{p \geq 1}\Big(\frac{|\psi|_{\wedge 1}}{|\psi_\star|_1} 2^{p} + \frac{|\psi_\star|_1}{|\psi|_{\wedge 1}}2^{-p}\Big).
\end{align*}
As $|\psi|_{\wedge 1}\leq |\psi_\star|_1$, we get that $\inf_{p \geq 1}\Big(\frac{|\psi|_{\wedge 1}}{|\psi_\star|_1} 2^{p} + \frac{|\psi_\star|_1}{|\psi|_{\wedge 1}}2^{-p}\Big)\lesssim 1$ and
\begin{align*}
|IV_m|\lesssim 2^{\rho m}\big(1+\mu(V^2)\big)|\psi_\star|_1|\psi|_{\wedge 1}.
\end{align*}
\noindent {\it Step 5).} Putting together the estimates obtained for $I_m$ in Step 2, $III_m$ in Step 3 and $IV_m$ in Step 4, and recalling $II_m=III_m+IV_m$ we eventually derive:
$$\mathbb{E}_{\mu}\big[\big(\sum_{u\in\mathbb{G}_m \cap \mathbb U_n}\psi(X_{u^-},X_u)\big)^2\big] \lesssim 2^{\varrho m}\big(|\psi^2|_\mu+|\psi^\star\psi|_{\mu}+ \big(1+\mu(V^2)\big)|\psi_\star|_1|\psi|_{\wedge 1}\big).$$
In the case $\nu(\mathcal Q\psi)\neq 0$, we replace $|\psi^2|_\mu$ by $|\psi^2|_{\mu+\nu}$ and $|\psi|_{\wedge 1}$ by $|\psi|_{\nu}$ as follows from Step 2 and 4. Taking square root, summing in $1 \leq m \leq n$, taking square again and normalising by $|\mathbb U_n^{\star}|^{-2}$ which is of order $2^{-2\varrho n}$, we obtain Proposition \ref{thm:convl2}.\\

\noindent {\it Step 6).} It remains to establish \eqref{eq: bornitude V}. We only sketch the argument which is similar to the proof of Proposition \ref{prop:lyapunov}. First, one obtains 
$$\mathcal P((1+V)\otimes(1+V)) \lesssim 1+\mathcal QV(x) + \mathcal Q V^2(x),$$
and it follows that
\begin{align*}
\mu(\mathcal Q^{m-l-1}\mathcal P\big((1+V)\otimes(1+V))\big) & \lesssim 1+\mu(\mathcal Q^{m-l}V)+\mu(\mathcal Q^{m-l}V^2) \\
&\lesssim 1+\mu(V)+\mu(\mathcal Q^{m-l}V^2)
\end{align*}
by Proposition \ref{prop:lyapunov}. Applying It\^o's formula and using Assumptions \ref{assu:unique} and \ref{assu:debut}
on can check that 
\begin{align*}
\mathcal QV^2(x)\lesssim 1+V(x)^2+\mathcal QV(x) \lesssim 1+V(x)^2
\end{align*}
by Proposition \ref{prop:lyapunov} again. We obtain \eqref{eq: bornitude V} by integrating w.r.t. $\mu$.
Finally
the case $\varrho = 0$ has to be treated separately mainly for notational reason, the proof following the same line as in the case $\varrho>0$. We delay it until Appendix \ref{appendix:varrho=0}. 

\subsection{Proof of Theorem \ref{thm:nonparametrique}}


\subsubsection*{Preparations} We first establish local estimates on the invariant density $\nu$.

\begin{lem} \label{lem: uplowbound densite}
Work under Assumptions \ref{assu:unique}, \ref{assu:debut} and \ref{assu:noyau}. Let $\mathcal Q \in \mathbb Q$ and let $\nu$ be the associated invariant density of Proposition \ref{th:ergodicity}. Let $x_0 \in \mathcal X$. There exist positive constants  $c_i = c_i(x_0,\mathcal Q)$ and a bounded neighbourhood $\mathcal V_{x_0}$ with non-empty interior such that
$$0 < c_1 \leq \inf_{x \in \mathcal V_{x_0}}\nu(x) \leq  \sup_{x \in \mathcal V_{x_0}}\nu(x) \leq c_2.$$
Moreover, $0 < \inf_{\mathcal Q \in \mathbb Q} c_1(x_0,\mathcal Q) \leq \sup_{\mathcal Q \in \mathbb Q} c_2(x_0,\mathcal Q)<\infty.$
\end{lem}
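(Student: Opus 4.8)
The plan is to run everything off the invariance identity $\nu(x)=\int_{\mathcal X}q(z,x)\,\nu(dz)$ together with the closed form \eqref{eq:def trans Q} for $q(z,x)$, bounding the right-hand side above and below on a bounded neighbourhood $\mathcal V_{x_0}$ of $x_0$ and keeping track that every constant depends only on the parameters $(r_i,b_i,\sigma_i,\gamma,\varepsilon,\delta)$ defining $\mathbb Q$. Throughout I set $\mathcal K=\{x/w:x\in\overline{\mathcal V_{x_0}},\ w\in[\varepsilon,1-\varepsilon]\}$, which is compact since $\supp(\ktilde)\subset[\varepsilon,1-\varepsilon]$, and I choose $\mathcal V_{x_0}$ small with $\overline{\mathcal V_{x_0}}\subset\mathcal X$.

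For the \emph{upper bound} I would plug \eqref{eq:def trans Q} into the invariance identity and use $B(x/w)\le b_2|x/w|^{\gamma}+b_1$, $\sigma(x/w)^{-2}\le\sigma_1^{-2}$ and $w^{-1}\le\varepsilon^{-1}$ to reduce matters to controlling $\sup_{y\in\mathcal K}\int_0^{\infty}e^{-b_1t}\sup_{z}\mathbb{E}\big[L^y_t(\phi_z)\big]\,dt$ (after an integration by parts $\mathbb{E}\big[\int_0^\infty e^{-b_1t}dL^y_t\big]=b_1\int_0^\infty e^{-b_1t}\mathbb{E}[L^y_t]\,dt$), and this is finite and uniformly bounded over $\mathbb Q$ by Lemma \ref{lem:borne_localtime}. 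Since $\int_0^1\ktilde(w)\,dw=1$ and $\nu$ is a probability measure, integrating against $\nu(dz)$ gives at once $\sup_{\mathcal V_{x_0}}\nu\le c_2<\infty$ uniformly in $\mathbb Q$; this part is routine.

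The \emph{lower bound} is the substantive part. First I would iterate the drift condition of Proposition \ref{prop:lyapunov} to get $\nu(V)\le v_2/(1-v_1)$, hence $\nu(\{|z|\le w_0\})\ge 1/2$ for $w_0=(2v_2/(1-v_1))^{1/2}$ by Markov's inequality, so that $\nu(x)\ge\tfrac12\inf_{|z|\le w_0}q(z,x)$. Next I would bound $q(z,x)$ below from \eqref{eq:def trans Q}: for $w\in[\varepsilon,1-\varepsilon]$ both $w$ and $1-w$ lie in $[\varepsilon,1-\varepsilon]$, so $\ktilde(w)\ge\delta$, and with $B\ge b_1$, $\sigma^{-2}\ge\sigma_2^{-2}$, $w^{-1}\ge 1$ this reduces everything to a uniform lower bound on $\mathbb{E}\big[\int_0^\infty e^{-\int_0^tB(\phi_z(s))ds}dL^{x/w}_t(\phi_z)\big]$ for $|z|\le w_0$, $x\in\mathcal V_{x_0}$, $w\in[\varepsilon,1-\varepsilon]$. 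Choosing $M$ with $\{|z|\le w_0\}\cup\mathcal K\subset(-M,M)$ and letting $\tau_M$ be the exit time of $(-M,M)$ by $\phi_z$, on $\{t<\tau_M\}$ one has $\int_0^tB(\phi_z(s))ds\le b_2M^{\gamma}+b_1$ for $t\le1$, so restricting the time integral to $t\le1\wedge\tau_M$,
\[
\mathbb{E}\Big[\int_0^\infty e^{-\int_0^tB(\phi_z(s))ds}dL^{x/w}_t(\phi_z)\Big]\ \ge\ e^{-(b_2M^{\gamma}+b_1)}\,\mathbb{E}\big[L^{x/w}_{1\wedge\tau_M}(\phi_z)\big]\ =\ e^{-(b_2M^{\gamma}+b_1)}\,\sigma(x/w)^2\!\int_0^1\!\bar p_s(z,x/w)\,ds,
\]
where $\bar p_s$ is the transition sub-density of $\phi$ killed on exiting $(-M,M)$ and the last equality is the occupation times formula.

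The remaining ingredient — and the main obstacle — is a \emph{quantitative} positivity statement: $\inf_{|z|\le w_0,\ y\in\mathcal K}\int_0^1\bar p_s(z,y)\,ds\ge c_0>0$ with $c_0$ depending only on $\sigma_1,\sigma_2$, the drift bound, and $M$ (hence only on the constants of $\mathbb Q$). This is exactly an Aronson/parabolic‑Harnack lower bound for a uniformly elliptic operator with bounded drift on the bounded domain $(-M,M)$, restricted to a compact subset staying away from the boundary; a soft compactness argument will not do because $\mathbb Q$ carries no obvious compact topology, so this estimate has to be quoted from the PDE literature. Combining it with the display above yields $\inf_{|z|\le w_0,\ x\in\mathcal V_{x_0}}q(z,x)>0$ uniformly over $\mathbb Q$, hence $\inf_{\mathcal V_{x_0}}\nu\ge c_1>0$ with $c_1$ depending only on the constants of $\mathbb Q$. (In a bounded state space one must take $x_0$ in the part of $\mathcal X$ reachable by one division step, since $q(\cdot,y)$ vanishes otherwise; in the reflected‑diffusion setting of Section \ref{sec:parametric estimation} the same argument applies with $(-M,M)$ replaced by the compact state space and the $\nu(V)$ step trivial.)
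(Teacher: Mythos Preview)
Your proof is correct and follows essentially the same route as the paper: both bounds come from the invariance identity $\nu(x)=\int q(z,x)\,\nu(dz)$ together with the explicit form of $q$, the upper bound reduces to Lemma~\ref{lem:borne_localtime} after integration by parts exactly as you do, and the lower bound proceeds by restricting the $\nu(dz)$ integral to a compact, controlling the $B$-exponential on a finite time horizon with the trajectory confined to a bounded set, and then invoking strict positivity of the diffusion transition density on compacts.

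The tactical differences are minor but worth recording. For the mass concentration step, the paper simply writes ``take $N$ sufficiently large'' so that $\int_{[-N,N]}\nu(y)\,dy$ is bounded below; your route via $\nu(V)\le v_2/(1-v_1)$ from Proposition~\ref{prop:lyapunov} and Markov's inequality is cleaner and makes the uniformity over $\mathbb Q$ transparent. For the density positivity, the paper works with the unkilled transition density $\rho_t(y,x)$ and an indicator $\{\sup_{t\le T}|\phi_y(t)|\le M\}$ (choosing $M$ large so this event has probability close to one), then cites a reference for $\inf\rho_t>0$ on compacts; your version with the killed sub-density and an Aronson-type lower bound is equivalent in spirit and, as you note, has the advantage that the constants are explicitly controlled by the ellipticity and drift bounds, which is what one needs for uniformity in $\mathcal Q\in\mathbb Q$.
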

\begin{proof}
Let $\mathcal V_{x_0}=[a,b] \subset \mathcal X$ be a bounded neighbourhood of $x_0$ and
$$\mathcal{V}_{x_0}^{\varepsilon}=\big[a/(1-\varepsilon)\wedge a/\varepsilon,\ b/\varepsilon\vee b/(1-\varepsilon)\big].$$
Let $x \in \mathcal V_{x_0}$. By Proposition \ref{prop:explicit trans}, using Assumptions \ref{assu:unique} and \ref{assu:debut}, we obtain
\begin{align*}
\nu(x) & = \int_{\mathcal X}\nu(y)q(y,x)dy \\
& = \int_{\mathcal X}\nu(y)\int_0^1\frac{\ktilde(z)}{z}B(x/z)\sigma(x/z)^{-2}\mathbb{E}\Big[\int_0^{\infty}e^{-\int_0^t B(\phi_y(s))ds}dL_t^{x/z}(\phi_y)\Big]dz dy\\
& \leq \int_{\mathcal X}\nu(y)\int_0^1(b_2|x/z|^\gamma+b_1)\frac{\ktilde(z)}{z}\sigma_1^{-2}\mathbb{E}\Big[\int_0^{\infty}e^{-b_1t}dL_t^{x/z}(\phi_y)\Big] dzdy.
\end{align*}
Noticing that for all $z\in[\varepsilon,1-\varepsilon]$ and $x\in \mathcal V_{x_0}$, $x/z\in \mathcal	V_{x_0}^{\varepsilon}$ and using Assumption \ref{assu:noyau}, we get
\begin{align*}
\nu(x) & \leq (b_2|x|^\gamma+b_1)\varepsilon^{-(1+\gamma)}\sigma_1^{-2}\int_{\mathcal X}\nu(y)\sup_{x \in \mathcal V_{x_0}^\varepsilon}\mathbb{E}\Big[\int_0^{\infty}e^{-b_1t}dL_t^{x}(\phi_y)\Big] dy \\
& = (b_2|x|^\gamma+b_1)\varepsilon^{-(1+\gamma)}\sigma_1^{-2}b_1\int_{\mathcal X}\nu(y)\int_0^{\infty}e^{-b_1t}\sup_{x \in \mathcal V_{x_0}^\varepsilon}\mathbb{E}\big[L_t^{x}(\phi_y)\big]dt dy,
\end{align*}
where the last equality comes from the integration by parts formula, see Appendix \ref{appendix:IPP} for a detailed version. By Lemma \ref{lem:borne_localtime}, we have
$\sup_{x \in  \mathcal V_{x_0}^\varepsilon}\mathbb{E}\big[L_t^x\left(\phi_y\right)\big] \lesssim 1+t^{3/2}$
 uniformly over $\mathbb Q$ and the first part of the lemma follows.
For the second part of the lemma, we have
\begin{align*}
\nu(x) 
& \geq b_1\sigma_2^{-2}\int_{\mathcal X}\nu(y)\inf_{x \in  \mathcal V_{x_0}^\varepsilon}\mathbb{E}\Big[\int_0^{\infty}e^{-b_2\int_0^t|\phi_y(s)|^\gamma ds-b_1t}dL_t^{x}(\phi_y)\Big] dy \\
& \geq  b_1\sigma_2^{-2}\int_{[-N,N] \cap \mathcal X}\nu(y)\inf_{x \in  \mathcal V_{x_0}^\varepsilon}\mathbb{E}\Big[\int_0^{T}e^{-(b_2M^\gamma+b_1)t}dL_t^{x}(\phi_y){\bf 1}_{\{\sup_{t \leq T}|\phi_y(t)|\leq M\}}\Big] dy \\
& \geq    b_1\sigma_2^{-2}e^{-(b_2M^\gamma+b_1)T}\int_{[-N,N] \cap \mathcal X}\nu(y)\inf_{x \in \mathcal V_{x_0}^\varepsilon}\mathbb{E}\big[L_T^{x}(\phi_y){\bf 1}_{\{\sup_{t \leq T}|\phi_y(t)|\leq M\}}\big] dy
\end{align*}
for arbitrary constants $M,N,T>0$. Since 
$\E\big[L_T^x(\phi_y){\bf 1}_{\{\sup_{t\leq T}|\phi_y(t)| \leq M\}}\big] \uparrow \E\big[L_T^x(\phi_y)\big]$ uniformly in $(x,y) \in \mathcal V_{x_0} \times [-N,N]$ as $M$ grows, pick $M$ large enough so that for every $y \in [-N,N] \cap \mathcal X$, we have
$$\inf_{x \in \mathcal V_{x_0}^\varepsilon}\mathbb{E}\big[L_T^{x}(\phi_y){\bf 1}_{\{\sup_{t \leq T}|\phi_y(t)|\leq M\}}\big] \geq \tfrac{1}{2}\inf_{x \in \mathcal V_{x_0}^\varepsilon}\E\big[L_T^x(\phi_y)\big].$$
Next, we use the fact that Assumption \ref{assu:unique} implies that the law of the random variable $\phi_y(t)$ admits a density $\rho_t(y,x)$ w.r.t. the Lebesgue measure and that this density is bounded away from zero on compact sets in $(y,x)$, see for instance \cite{Dacunha, vgcjj}. In turn
$\mathbb{E}\big[L_T^{x}(\phi_y)\big] =\int_0^T\rho_t(y,x)dt \geq \tau_T >0$
for some $\tau_T$ depending also on $M$ and $N$ and we infer
$$\nu(x)  \geq  \frac{\tau_T}{2}b_1\sigma_2^{-2}e^{-(b_2M^\gamma+b_1)T}\int_{[-N,N] \cap \mathcal X}\nu(y)dy$$
and we obtain the result by taking $N$ sufficiently large. The proof is complete.
\end{proof}

\subsubsection*{Completion of proof of Theorem \ref{thm:nonparametrique}}

\noindent {\it Step 1).} Write
$\widehat \nu_n(y_0)-\nu(y_0) = I + II$,
with
$$I=\frac{1}{|\mathbb U_n|}\sum_{u \in \mathbb U_n}G_h(y_0-X_u)-\nu(\mathcal Q G_h(y_0-\cdot))\;\;\text{and}\;\;II = G_h \star \nu(y_0)-\nu(y_0).$$
We plan to apply Proposition \ref{thm:convl2} to $I$ with $\psi(x,y)=\varphi(y)=G_h(y_0-y)$. By Lemma \ref{lem: uplowbound densite}, $\nu$ is locally bounded and we check that
\begin{align*}
& |\psi^2|_{\mu+\nu}  \lesssim \int_{\mathcal X} |G_h(y_0-y)^2| dy \lesssim h^{-1}\int_{\R}G(y)^2dy \lesssim h^{-1}, 
\end{align*}
\begin{align*}
& |\psi^\star \psi|_{\mu}  \lesssim \sup_y|G_h(y)|\int_{\R}|G_h(y_0-y)|dy \lesssim h^{-1}, 
\end{align*}
and
\begin{align*}
& |\psi_\star|_1|\psi|_{\wedge 1} \lesssim \Big(\int_{\mathcal X}|G_h(y_0-y)|dy\Big)^2 \lesssim 1. 
\end{align*}
Therefore, by Proposition \ref{thm:convl2}, we have $\E_\mu[I^2] \lesssim |\mathbb{U}_n|^{-1}h^{-1}$ and this term is of order $|\mathbb U_n|^{-2\beta/(2\beta+1)}$ from the choice of $h$. For the term $II$, Lemma \ref{lem: uplowbound densite} and the representation $\nu(x)  = \int_{\mathcal X}\nu(y)q(y,x)dy$ show that  $\nu \in \mathcal H^{\beta}(y_0)$ as soon as $q\in \mathcal H^{\alpha,\beta}(x_0,y_0)$. Then, by classical kernel approximation (see {\it e.g.} Chapter 1 of the book by Tsybakov \cite{Tsyb}) we have that $II^2 \lesssim h^{2\beta}$ since the order $k$ of the kernel $G$ satisfies $k > \beta$, and thus $II^2$ has the same order as $I^2$ from the choice of $h$.\\

\noindent {\it Step 2).} For the estimation of $q(x_0,y_0)$, write
$$\widehat q_n(x_0,y_0)-q(x_0,y_0) = I+II,$$
with
$$I = \frac{\mathcal M_{\mathbb U_n}\big(G_{h_1,h_2}^{\otimes 2}(x_0-\cdot,y_0-\cdot)\big)-\nu(x_0)q(x_0,y_0)}{\mathcal M_{\mathbb U_n}\big(G_{h}(x_0-\cdot)\big)\vee \varpi_n},$$
and
$$II = \frac{q(x_0,y_0)\big(\nu(x_0)-\mathcal M_{\mathbb U_n}\big(G_{h}(x_0-\cdot)\big)\vee \varpi_n\big)}{\mathcal M_{\mathbb U_n}\big(G_{h}(x_0-\cdot)\big)\vee \varpi_n}.$$
We have $\big|I\big| \leq III +IV,$
with
$$III  = \varpi_n^{-1}\big|\mathcal M_{\mathbb U_n}\big(G_{h_1,h_2}^{\otimes 2}(x_0-\cdot,y_0-\cdot)\big)-G_{h_1,h_2}^{\otimes 2}\star\nu(x_0)q(x_0,y_0)\big|$$
and
$$  IV  =  \varpi_n^{-1}\big|G_{h_1,h_2}^{\otimes 2}\star\nu(x_0)q(x_0,y_0)-\nu(x_0)q(x_0,y_0)\big|.$$
We plan to apply Proposition \ref{thm:convl2} to bound $III$ with $\psi(x,y)=G_{h_1,h_2}(x_0-x,y_0-y)$. Using Lemma \ref{lem: uplowbound densite} and the fact that $\mu$ is absolutely continuous, we have $|\psi|_{\mu+\nu} \lesssim |\psi|_1$. It readily follows that
\begin{align*}
 |\psi^2|_{\mu+\nu}  & \lesssim |G_{h_1}(x_0-\cdot)^2|_1|G_{h_2}(y_0-\cdot)^2|_1 \lesssim h_1^{-1}h_2^{-1}, 
 \end{align*}
\begin{align*}
 |\psi^\star \psi|_{\mu}  \;\,& \lesssim |G_{h_1}(x_0-\cdot)\sup_y|G_{h_2}(y_0-y)| G_{h_1}(x_0-\cdot)G_{h_2}(y_0-\cdot) |_1\\
& =|G_{h_1}(x_0-\cdot)^2|_1 \sup_y|G_{h_2}(y_0-y)| |G_{h_2}(y_0-\cdot) |_1 \lesssim h_1^{-1}h_2^{-1}, 
\end{align*}
and
\begin{align*}
& |\psi_\star|_1|\psi|_1 \lesssim \sup_x|G_{h_1}(x_0-x)||G_{h_2}(y_0-\cdot)|_1^2| G_{h_1}(x_0-\cdot)|_1 \lesssim  h_1^{-1}.\\
\end{align*}
We conclude
$$ \E_{\mu}\big[III^2\big] \lesssim \varpi_n^{-2}|\mathbb U_n|^{-1}h_1^{-1}h_2^{-1},$$
and this term has order $\varpi_n^{-2}|\mathbb U_n|^{-2s(\alpha,\beta)/(2s(\alpha,\beta)+1)}$ from the choice of $h_1$ and $h_2$. By kernel approximation and the fact that $G$ has order $k > \max\{\alpha,\beta\}$, noting that $(x,y)\mapsto \mu(x)q(x,y) \in \mathcal H^{\alpha \wedge \beta, \beta}$, we have
$$ |IV| \lesssim h_1^{\alpha \wedge \beta}+h_2^\beta \lesssim \varpi_n^{-1}|\mathbb U_n|^{-s(\alpha,\beta)/(2s(\alpha,\beta)+1)}$$
from the choice of $h_1,h_2$.\\

We turn to the term $II$. We plan to use
$$ \big(\nu(x_0)-\mathcal M_{\mathbb U_n}\big(G_{h}(x_0-\cdot)\big)\vee \varpi_n\big)^2 \lesssim \big(\nu(x_0)-\mathcal M_{\mathbb U_n}\big(G_{h}(x_0-\cdot)\big)\big)^2 +{\bf 1}_{\big\{\mathcal M_{\mathbb U_n}(G_{h}(x_0-\cdot)) < \varpi_n\big\}}.$$
Pick $n$ large enough so that 
$0 < \varpi_n \leq \tau(x_0)=\tfrac{1}{2}\inf_{\mathcal Q \in \mathbb Q, x \in \mathcal V_{x_0}}\nu(x)$,
a choice which is possible by Lemma \ref{lem: uplowbound densite}. Since $\{\mathcal M_{\mathbb U_n}(G_{h}(x_0)-\cdot) < \varpi_n\} \subset \{\mathcal M_{\mathbb U_n}(G_{h}(x_0-\cdot))-\nu(x_0) < -\tau(x_0)\}$, we further infer 

\begin{align*}
& \E_\mu\big[\big(\nu(x_0)-\mathcal M_{\mathbb U_n}\big(G_{h}(x_0-\cdot)\big)\vee \varpi_n\big)^2\big] \\
\leq & \,\E_\mu\big[\big(\nu(x_0)-\mathcal M_{\mathbb U_n}\big(G_{h}(x_0-\cdot)\big)\big)^2\big]+\PP_\mu\big(\big|\nu(x_0)-\mathcal M_{\mathbb U_n}\big(G_{h}(x_0-\cdot)\big)\big| \geq \tau(x_0)\big) \\
 \lesssim &\,\E_\mu\big[\big(\nu(x_0)-\mathcal M_{\mathbb U_n}\big(G_{h}(x_0-\cdot)\big)\big)^2\big].
\end{align*}

Applying {\it Step 1)} of the proof, we derive
$$\E_\mu\big[II^2\big] \lesssim \varpi_n^{-2}|\mathbb U_n|^{-2\beta/(2\beta+1)}$$
and this term has negligible order. The proof of Theorem \ref{thm:nonparametrique} is complete.

\subsection{Proof of Proposition \ref{prop:identifiabilite}}
Let $s(x)=\int_0^x\exp\left(-2\int_0^y\frac{r(z)}{\sigma(z)^2}dz\right)$ and $m(x)= \frac{2}{\sigma(x)^2s'(x)}$. Consider the infinitesimal generator $\mathcal L$ associated to the diffusion process \eqref{eq:diffusion_compact}, written in its divergence form
$$\mathcal L f(x) = \frac{1}{m(x)}\frac{d}{dx}\Big(\frac{1}{s(x)}\frac{d}{dx}f(x)\Big),\;\;f\in \mathcal D(\mathcal L),$$
with domain $\mathcal D(\mathcal L)$ densely defined on twice continuously differentiable functions $f$ satisfying the boundary condition $f'(0)=f'(L)=0$.
By It\^o formula and Fubini's theorem, for $f \in \mathcal D(\mathcal L)$, we have
\begin{align*}
\int_{\mathcal X}f(y)q(x,y)dy & =-\int_{0}^{1} \ktilde(z)\mathbb E\Big[\int_0^\infty f\left(z\phi_x(t)\right)\frac{d}{dt}e^{-\int_0^t B\left(\phi_x(s)\right)ds}dt\Big]dz \\
& = \int_{0}^{1}\ktilde(z)f_z(x)dz+\int_{0}^{1}\ktilde(z)\mathbb E\Big[\int_0^\infty \mathcal L f_z\left(\phi_x(t)\right) e^{-\int_0^t B(\phi_x(s))ds}dt\Big]dz,
\end{align*}
where we set $f_z(x)=f(zx)$ for $z \in [\varepsilon, 1-\varepsilon]$ since $\supp(\ktilde) \subset [\varepsilon, 1-\varepsilon]$ by Assumption  \ref{assu:noyau}.
Pick
$f(x)=\int_0^x\exp\big(2\varepsilon^{-1}\int_0^y e^{2\int_0^{u\varepsilon^{-1}}\frac{|r(v)|}{\sigma(v)^2}dv}s(u)^{-1}du\big)dy$,
and note that
\begin{align*}
f''(zx)=\frac{2}{\varepsilon s(zx)}\exp\Big(2\int_0^{zx\varepsilon^{-1}}\frac{|r(v)|}{\sigma(v)^2}dv\Big)f'(zx).
\end{align*}
It follows that for $z \geq \varepsilon$ and every $x \in \mathcal X$, we have
\begin{align*}
\mathcal{L}f_z(x) & =\frac{z}{m(x)}\frac{zs(x)f''(zx)-s'(x)f'(zx)}{s^2(x)}  \\
& =\frac{zf'(zx)}{m(x)s(x)^2}\Big(\frac{2zs(x)}{\varepsilon s(zx)}e^{2\int_0^{zx\varepsilon^{-1}}\frac{|r(v)|}{\sigma(v)^2}dv}-e^{-2\int_0^{x}\frac{r(v)}{\sigma(v)^2}dv}\Big)>0.
\end{align*}

Now let $B_1, B_2:\mathcal X \rightarrow [0,\infty)$ be two functions in an orderly class $\mathcal{B}$ according to Definition \ref{def:ordering} and write $q_{B_1}$ and $q_{B_2}$ for the associated transition densities. With no loss of generality, we may (and will) assume that $B_1(x) \leq B_2(x)$ for every $x \in \mathcal{X}$. Assume that $q_{B_1}=q_{B_2}$. Since $\supp(\ktilde) \subset [\varepsilon, 1-\varepsilon]$, we have
\begin{align*}
& \int_{\mathcal X}f(y)\big(q_{B_1}(x,y)-q_{B_2}(x,y)\big)dy \\
 = &\int_{\varepsilon}^{1-\varepsilon} \ktilde(z)\mathbb E\Big[\int_0^\infty \mathcal L f_z(\phi_x(t))\big(e^{-\int_0^t B_1(\phi_x(s))ds}-e^{-\int_0^t B_2(\phi_x(s))ds} \big)\Big]dtdz = 0.
\end{align*}
Our choice of $f$ and the property $B_1 \leq B_2$ implies that the integrand is non-negative. It follows that
$$\ktilde(z)\big(e^{-\int_0^t B_1(\phi_x(s))ds}-e^{-\int_0^t B_2(\phi_x(s))ds} \big)=0$$
$dzdt \otimes \mathbb P$-a.s.
Picking $z$ such that $\ktilde(z) >0$, we obtain 
$\int_0^t B_1(\phi_x(s))ds=\int_0^t B_2(\phi_x(s))ds$
$\mathbb P$-a.s. for every $t \geq 0$ by continuity of the integrand in $t$.
By the occupation times formula, it follows that
$\int_{\mathcal X}\big(B_1(y)-B_2(y)\big)L_t^y(\phi_x)dy=0,$
almost-surely, and by the ordering property, $B_1(y)= B_2(y)$ for every $y$ such that $L_t^y(\phi_x)>0$, {\it i.e.} for $y\in [\inf_{0 \leq s \leq t}\phi_x(s), \sup_{0 \leq s \leq t}\phi_x(s)] \rightarrow \mathcal{X}$
as $t \rightarrow \infty$. The proof of Proposition \ref{prop:identifiabilite} is complete. 

\subsection{Proof of Theorem \ref{th:consistency}}
\subsubsection*{Preparation for the proof}
We first establish uniform bounds for $q_{\vartheta}(x,y)$. Remember that in the reflected case, we have $\mathcal X=[0,L]$ and $\supp(\kappa) \subset [\varepsilon, 1-\varepsilon]$ under Assumption  \ref{assu:noyau}.
\begin{lem}\label{lemma:bound_p}
Work under Assumptions \ref{assu:unique}, \ref{assu:noyau} and \ref{assu:nondegeneracy}. For sufficiently small $\eta >0$, we have:
\begin{align*}
0 < \inf_{x \in \mathcal X, y \in \mathcal X_\eta, \vartheta\in \Theta}q_{\vartheta}(x,y) \leq \sup_{x,y\in \mathcal X, \vartheta \in \Theta}q_{\vartheta}(x,y)<\infty, 
\end{align*}
where $\mathcal X_\eta = [0,(1-\varepsilon)L-\eta]$.
\end{lem}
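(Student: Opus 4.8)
The plan is to work directly from the explicit formula \eqref{eq:forme transition param}, namely
$$q_{\vartheta}(x,y)=\int_{y/L}^1\frac{\ktilde(z)}{z}B_0(\vartheta,y/z)\sigma(y/z)^{-2}\mathbb{E}\Big[\int_0^{\infty}e^{-\int_0^t B_0(\vartheta,\phi_x(s))ds}dL_t^{y/z}(\phi_x)\Big]dz,$$
and to control the three factors that appear under the $z$-integral: the fragmentation weight $\ktilde(z)/z$, the ratio $B_0(\vartheta,y/z)\sigma(y/z)^{-2}$, and the expected discounted local time. By Assumption \ref{assu:nondegeneracy} we have $b_3\leq B_0(\vartheta,\cdot)\leq b_4$ uniformly in $\vartheta$, and by Assumption \ref{assu:unique} (reflected case) $\sigma_1\leq\sigma\leq\sigma_2$, so the middle factor is pinned between $b_3\sigma_2^{-2}$ and $b_4\sigma_1^{-2}$ regardless of $\vartheta,x,y$. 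For the fragmentation weight, Assumption \ref{assu:noyau} gives $\supp(\ktilde)\subset[\varepsilon,1-\varepsilon]$ and $\ktilde\geq\delta$ on that interval, so $\ktilde(z)/z$ is bounded above by $\delta'/\varepsilon$ (where $\delta'$ is an upper bound for $\ktilde$, which exists since $\kappa$ is a density with the stated lower bound on a fixed interval — or one simply keeps $\ktilde(z)\le$ its sup) and bounded below by $\delta$ on $[\varepsilon,1-\varepsilon]$.

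The substantive work is the discounted local time term $\mathbb{E}\big[\int_0^\infty e^{-\int_0^t B_0(\vartheta,\phi_x(s))ds}dL_t^{w}(\phi_x)\big]$ for $w=y/z$. For the \emph{upper} bound, since $B_0(\vartheta,\cdot)\geq b_3$, this is at most $\mathbb{E}\big[\int_0^\infty e^{-b_3 t}dL_t^w(\phi_x)\big]$, which by integration by parts equals $b_3\int_0^\infty e^{-b_3 t}\mathbb{E}[L_t^w(\phi_x)]dt$; Lemma \ref{lem:borne_localtime} (valid here for the reflected diffusion, whose local time is controlled at least as well as in the non-reflected case, $w$ ranging over the compact $\mathcal X=[0,L]$) gives $\mathbb{E}[L_t^w(\phi_x)]\lesssim 1+t^{3/2}$ uniformly in $x,w$, so the $t$-integral converges and the upper bound follows. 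For the \emph{lower} bound, use $B_0(\vartheta,\cdot)\leq b_4$ to get the pointwise lower estimate $\mathbb{E}\big[\int_0^T e^{-b_4 t}dL_t^w(\phi_x)\big]\geq e^{-b_4 T}\mathbb{E}[L_T^w(\phi_x)]$ for any fixed $T>0$, and then bound $\mathbb{E}[L_T^w(\phi_x)]=\int_0^T\rho_t(x,w)dt$ from below, where $\rho_t(x,w)$ is the transition density of the reflected diffusion. Here one invokes that under the ellipticity of $\sigma$ the reflected diffusion has a transition density that is bounded away from zero on $[0,T]\times\mathcal X\times\mathcal X$ for each fixed $T$ (a standard Aronson-type lower bound, or the argument already used in the proof of Lemma \ref{lem: uplowbound densite}), giving $\mathbb{E}[L_T^w(\phi_x)]\geq\tau_T>0$ uniformly in $x\in\mathcal X$ and $w\in\mathcal X$.

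Finally, one checks that the range of integration in $z$ is non-trivial uniformly for $y\in\mathcal X_\eta=[0,(1-\varepsilon)L-\eta]$: for such $y$ one has $y/L\leq 1-\varepsilon-\eta/L<1-\varepsilon$, so the interval $[y/L,1]\cap[\varepsilon,1-\varepsilon]$ always contains a sub-interval of length bounded below by a constant depending only on $\varepsilon,\eta,L$ (for $\eta$ small enough so that $(1-\varepsilon)L-\eta>\varepsilon L$, this sub-interval is $[\,\max(\varepsilon,y/L),1-\varepsilon\,]$, of length $\geq\min(1-2\varepsilon,\eta/L)>0$); moreover on that sub-interval $w=y/z$ ranges inside the compact $\mathcal X$, so the uniform local-time bounds apply. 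Combining, $q_{\vartheta}(x,y)$ is bounded below by the product of these uniform lower bounds times the length of the sub-interval, and above by the product of the uniform upper bounds times $1$, both independently of $(x,\vartheta)\in\mathcal X\times\Theta$ and of $y\in\mathcal X_\eta$. The main obstacle is the uniform-in-$x$ lower bound on the reflected transition density $\rho_t(x,w)$; everything else is bookkeeping with the assumptions and Lemma \ref{lem:borne_localtime}. I would handle that obstacle either by citing the relevant Gaussian lower bound for uniformly elliptic diffusions with reflecting boundary (e.g.\ the references already used for Lemma \ref{lem: uplowbound densite}) or by reproducing the short argument used there, noting that the compactness of $\mathcal X=[0,L]$ actually makes the uniformity easier than in the unbounded case.
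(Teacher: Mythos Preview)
Your proposal is correct and follows essentially the same route as the paper's proof: bound $B_0$ and $\sigma$ via Assumptions \ref{assu:unique} and \ref{assu:nondegeneracy}, control the discounted local time by $e^{-b_3 t}$ (upper) and $e^{-b_4 t}$ (lower), invoke Lemma \ref{lem:borne_localtime} for the upper bound and a positive lower bound on the reflected transition density $\rho_t(x,w)$ for the lower bound (the paper cites \cite{cattiaux1992stochastic} for this), and use that the effective $z$-range $[\varepsilon\vee y/L,\,1-\varepsilon]$ has length at least $\eta/L$ when $y\in\mathcal X_\eta$.

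Two small remarks. First, for the lower bound the paper integrates by parts to get $b_4\int_0^\infty e^{-b_4 t}\mathbb{E}[L_t^{y/z}(\phi_x)]\,dt$ and then uses $\inf_{x,w}\mathbb{E}[L_t^w(\phi_x)]>0$, whereas you truncate at a finite $T$ before taking expectations; both arguments are equivalent. Second, your pointwise upper bound on $\ktilde$ is not actually guaranteed by Assumption \ref{assu:noyau} (a density with a lower bound on $[\varepsilon,1-\varepsilon]$ need not be bounded above). The paper sidesteps this by simply using $\int_\varepsilon^{1-\varepsilon}\ktilde(z)/z\,dz\leq\varepsilon^{-1}\int\ktilde=\varepsilon^{-1}$, which is all that is needed for the upper bound on $q_\vartheta$; you should make the same move.
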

\begin{proof}
The proof is close to that of Lemma \ref{lem: uplowbound densite}. Let $x\in \mathcal X$ and $y \in \mathcal X_\eta $. We have 
\begin{align*}
\inf_{\vartheta\in\Theta}q_{\vartheta}(x,y) & \geq b_3\sigma_2^{-2} \int_{\varepsilon\vee yL^{-1}}^{1-\varepsilon}\frac{\ktilde(z)}{z}\mathbb{E}\big[\int_0^{\infty}e^{-b_4t}dL_t^{y/z}(\phi_x)\big]dz \\
&\geq  b_3\sigma_2^{-2}b_4 \int_{\varepsilon\vee yL^{-1}}^{1-\varepsilon}\frac{\ktilde(z)}{z}\int_0^{\infty}e^{-b_4t}\mathbb{E}\big[L_t^{y/z}(\phi_x)\big]dtdz  \\
& \geq (1-\varepsilon)^{-1}b_3\sigma_2^{-2}b_4\frac{\eta}{L}\int_0^\infty e^{-b_4t}\inf_{x,y \in \mathcal X} \mathbb{E}\big[L_t^{y}(\phi_x)\big]dt
\end{align*} 
According to \cite{cattiaux1992stochastic}, Section 5, proof of Lemma 5.37, the law of $\phi_x(t)$ is absolutely continuous with density $y\mapsto \rho_t(x,y)$  that can be taken continuous and that satisfies $\inf_{x,y\in\mathcal X}\rho_t(x,y)>0$ for every $t >0$. Therefore
$$\inf_{x,y\in \mathcal X}\mathbb{E}\big[L_t^{y}(\phi_x)\big] = \inf_{x,y\in \mathcal X}\int_0^t\rho_s(x,y)ds> 0$$
and the result follows.
The upper bound readily follows from
\begin{align*}
\sup_{\vartheta\in\Theta}q_{\vartheta}(x,y) & \leq b_4\sigma_1^{-2}\Big(\int_{\varepsilon}
^{1-\varepsilon}\frac{\ktilde(z)}{z}dz\Big)\sup_{x,y\in \mathcal X}\mathbb{E}\big[\int_{0}^{\infty}e^{-b_3t}dL_t^{y}(\phi_x)\big] \\
& \leq \varepsilon^{-1}b_4\sigma_1^{-2}\sup_{x,y\in \mathcal X}\mathbb{E}\big[\int_{0}^{\infty}e^{-b_3t}dL_t^{y}(\phi_x)\big] \\
& = \varepsilon^{-1}b_4\sigma_1^{-2}b_3\sup_{x,y\in \mathcal X}\Big(\int_{0}^{\infty}e^{-b_3t}\mathbb{E}\big[L_t^{y}(\phi_x)\big]dt\Big) 
\end{align*}
which is finite by Lemma \ref{lem:borne_localtime}.
\end{proof}


\subsubsection*{Completion of proof of Theorem \ref{th:consistency}}
This proof is classical (see for instance van der Vaart \cite{vdV00} Theorem 5.14). We nevertheless give it  for self-containedness. For $a \in \Theta$, let
$$\mathcal M(a,\vartheta)=\int_{\mathcal{X}}\nu_{\vartheta}(dx)\int_{\mathcal{X}}\log q_{a}(x,y)q_{\vartheta}(x,y)dy.$$
First, $a\mapsto \mathcal M(a,\vartheta)$ has a unique maximum at $a=\vartheta$, as stems from the inequality $\log(x)\leq 2(\sqrt{x}-1)$ for $x\geq 0$. Indeed 
\begin{align*}
\mathcal M(a,\vartheta)-\mathcal M(\vartheta,\vartheta)&=\int_{\mathcal{X}}\nu_{\vartheta}(dx)\int_{\mathcal{X}}\log\frac{q_{a}(x,y)}{q_{\vartheta}(x,y)}q_{\vartheta}(x,y)dy\\
&\leq \int_{\mathcal{X}}\nu_{\vartheta}(dx)\big(\int_{\mathcal{X}}2\sqrt{q_{a}(x,y)}\sqrt{q_{\vartheta}(x,y)}dy-2\big)\\
&\leq -\int_{\mathcal{X}}\nu_{\vartheta}(dx)\int_{\mathcal{X}}\Big(\sqrt{q_{a}(x,y)}-\sqrt{q_{\vartheta}(x,y)}\Big)^2dy \leq 0.
\end{align*} 
Next, writing $m_{\mathcal U}(x,y)=\sup_{a \in \mathcal U}\log q_{a}(x,y)$, we prove that for every $a \neq \vartheta \in\Theta$, there exists a  neighbourhood $\mathcal U_{a}$ of $a$ such that:
\begin{equation}
\label{eq:maj_m}
\nu_{\vartheta}\big(\mathcal Q_{\vartheta}m_{\mathcal U_{a}}\big) <\nu_{\vartheta}\big(\mathcal Q_{\vartheta} \log q_{\vartheta}\big) = \mathcal M(\vartheta,\vartheta).
\end{equation}


Pick a decreasing sequence of open balls $(\mathcal U_\ell(a))_{\ell \geq 1}$ around $a$ with vanishing diameters. For every $x,y\in\mathcal{X}$ we have $m_{\mathcal U_\ell(a)}(x,y)\downarrow \log q_{a}(x,y)$ by continuity of $a \mapsto \log q_{a}(x,y)$ thanks to the continuity of $B_0$ according to Assumption \ref{assu:nondegeneracy}. By Lemma \ref{lemma:bound_p}, we also have
$
\nu_{\vartheta}\big(\mathcal Q_{\vartheta}m_{\mathcal U}\big)<\infty
$
for any $\mathcal U\subset \Theta$ therefore
\begin{align*}
\nu_{\vartheta}\big(\mathcal Q_{\vartheta}m_{\mathcal U_l(a)}\big)\downarrow \nu_{\vartheta}\big(\mathcal Q_{\vartheta}\log q_{a})= \mathcal M(a,\vartheta)\leq \mathcal M(\vartheta,\vartheta)
\end{align*}
by monotone convergence with equality only if $a=\vartheta$, and this proves the existence of $\mathcal U_a$ such that \eqref{eq:maj_m} holds.
We are now ready to prove the consistency result. For $\eta' >0$, the compact ball
$$\mathcal C_{\eta'}(\vartheta) = \{a\in \Theta, |a-\vartheta| \geq \eta'\}$$
can be covered by finitely many open neighbourhoods $\mathcal U_{a_1}, \ldots \mathcal U_{a_p}$ with $a_i \in \mathcal C_{\eta'}(\vartheta)$ and such that \eqref{eq:maj_m} holds for every $\mathcal U_{a_i}$. For $\eta >0$, let 
$$m_{\mathcal U}^{(\eta)}(x,y) = \sup_{a \in \mathcal U} \log q_a(x,y){\bf 1}_{\{q_a(x,y) \geq \eta\}}.$$ 
Abbreviating $\mathcal L_n(a, (X_u)_{u \in \mathbb U_n})$ by $\mathcal L_n(a) $, it follows that
\begin{align}
|\mathbb{U}_n^\star|^{-1}\sup_{a\in \mathcal C_{\eta'}(\vartheta)}\log \mathcal L_n(a) 
& \leq  
\max_{1 \leq i \leq p}|\mathbb{U}_n^\star|^{-1}\sum_{u\in\mathbb{T}_n^\star}m_{\mathcal U_{a_i}}^{(\eta)}\left(X_{u^-},X_u\right) \nonumber \\ 
& \rightarrow  \max_{1\leq i \leq p}\nu_{\vartheta}\big(\mathcal Q_{\vartheta}m_{\mathcal U_{a_i}}\big) < \mathcal M(\vartheta, \vartheta) \label{eq:maj essential}
\end{align}
in probability as $n \rightarrow \infty$ and letting $\eta \rightarrow 0$, as stems from
Corollary \ref{thm:convl2_compact} and the fact that $\sup_{x,y \in \mathcal X_\eta}m_{\mathcal U_{a_i}}(x,y) < \infty$ by Lemma \ref{lemma:bound_p}. Finally, if $\widehat{\vartheta}_n\in  \mathcal C_{\eta'}(\vartheta)$, then, by definition of $\widehat \vartheta_n$, we have
\begin{align*}
|\mathbb{U}_n^\star|^{-1}\sup_{a \in \mathcal C_{\eta'}(\vartheta)}\log \mathcal L_n(a)\geq |\mathbb{U}_n^\star|^{-1} \log \mathcal L_n(\widehat{\vartheta}_n)\geq |\mathbb{U}_n^\star|^{-1}\log  \mathcal L_n(\vartheta)> \mathcal M(\vartheta, \vartheta)-\epsilon_n,
\end{align*}
where $\epsilon_n \rightarrow 0$ in probability, as follows from Corollary \ref{thm:convl2_compact}. We conclude the proof by noticing that
\begin{align*}
\big\{\widehat{\vartheta}_n\in   \mathcal C_{\eta'}(\vartheta) \big\} \subset \big\{ |\mathbb{U}_n^\star|^{-1}\sup_{a\in  \mathcal C_{\eta'}(\vartheta) }\mathcal L_n(a)\geq \mathcal M(\vartheta, \vartheta)-\epsilon_n\big\}
\end{align*}
and the fact that the probability of this last event converges to $0$ by \eqref{eq:maj essential} as $n \rightarrow \infty$.

\subsection{Proof of Theorem \ref{thm:norm_asympt}}

\subsubsection*{Preparation for the proof} We start by proving some useful estimates on the gradient and Hessian of $\log q_\vartheta$.
Let \[\Gamma_{\vartheta}=\nabla_{\vartheta}\log q_{\vartheta}=\left(\partial_{\vartheta_1}\log q_{\vartheta},\ldots,\partial_{\vartheta_d}\log q_{\vartheta}\right),\quad \Gamma_{\vartheta,i}=\partial_{\vartheta_i}\log q_{\vartheta},\quad 1\leq i\leq d.\]

\begin{lem}\label{lemma:partialp}
Work under Assumptions \ref{assu:unique}, 
\ref{assu:noyau}, \ref{assu:nondegeneracy} and \ref{assu:reg_B}. For every $1\leq i,j\leq d$ and $\eta >0$, we have
\begin{align*}
\sup_{x \in \mathcal X,y\in \mathcal X_\eta,\vartheta\in\Theta}\left|\Gamma_{\vartheta,i}(x,y)\right|<\infty, \sup_{x \in \mathcal X,y\in \mathcal X_\eta,\vartheta\in\Theta}\left|\partial_{\vartheta}\Gamma_{\vartheta}(x,y)_{i,j}\right|<\infty, \sup_{x \in \mathcal X,y\in \mathcal X_\eta,\vartheta\in\Theta}\left\Vert\partial^2_{\vartheta}\Gamma_{\vartheta}(x,y)\right\Vert<\infty
\end{align*}
where $\left\Vert\cdot\right\Vert$ corresponds to the operator norm for the Hessian $\partial^2_{\vartheta}\Gamma_{\vartheta}(x,y)$.
\end{lem}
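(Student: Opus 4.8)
The plan is to differentiate the explicit formula \eqref{eq:forme transition param} for $q_\vartheta(x,y)$ under the integral sign and to bound the resulting expressions using, on one hand, the lower bound $\inf_{x\in\mathcal X,y\in\mathcal X_\eta,\vartheta}q_\vartheta(x,y)>0$ from Lemma \ref{lemma:bound_p}, and on the other hand, uniform bounds on the derivatives of $B_0$ from Assumption \ref{assu:reg_B} together with the local time estimate of Lemma \ref{lem:borne_localtime}. Concretely, writing
$$q_\vartheta(x,y)=\int_{y/L}^1\frac{\ktilde(z)}{z}B_\vartheta(y/z)\sigma(y/z)^{-2}\,\mathbb E\Big[\int_0^\infty e^{-\int_0^tB_\vartheta(\phi_x(s))ds}dL_t^{y/z}(\phi_x)\Big]dz,$$
with $B_\vartheta(\cdot)=B_0(\vartheta,\cdot)$, I would first justify differentiation under both the $dz$ integral and the expectation: all the integrands are dominated uniformly in $\vartheta$ because $\supp(\ktilde)\subset[\varepsilon,1-\varepsilon]$ keeps $z$ away from $0$ and $1$, because $B_\vartheta$ and its $\vartheta$-derivatives are bounded by Assumptions \ref{assu:nondegeneracy} and \ref{assu:reg_B}, and because $b_3\leq B_\vartheta$ gives the exponential decay $e^{-\int_0^tB_\vartheta(\phi_x(s))ds}\leq e^{-b_3t}$, so that $\int_0^\infty e^{-b_3t}\sup_x\mathbb E[L_t^{y/z}(\phi_x)]dt<\infty$ and is locally bounded in $y/z$ by Lemma \ref{lem:borne_localtime}. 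This produces, for each multi-index, an explicit expression for $\partial_\vartheta^k q_\vartheta(x,y)$ as a finite sum of terms of the same structure, each bounded uniformly over $x\in\mathcal X$, $y\in\mathcal X_\eta$, $\vartheta\in\Theta$.

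The second step is the passage to the logarithm. Since $\Gamma_{\vartheta,i}=\partial_{\vartheta_i}q_\vartheta/q_\vartheta$, the first bound follows immediately by combining $\sup|\partial_{\vartheta_i}q_\vartheta|<\infty$ with $\inf q_\vartheta>0$ on $\mathcal X\times\mathcal X_\eta\times\Theta$. For the Hessian, the chain rule gives
$$\partial_\vartheta\Gamma_\vartheta(x,y)_{i,j}=\frac{\partial^2_{\vartheta_i\vartheta_j}q_\vartheta}{q_\vartheta}-\frac{\partial_{\vartheta_i}q_\vartheta\,\partial_{\vartheta_j}q_\vartheta}{q_\vartheta^2},$$
and similarly the third derivative of $\log q_\vartheta$ is a polynomial in the first three derivatives of $q_\vartheta$ divided by powers of $q_\vartheta$ (up to $q_\vartheta^3$ in the denominator). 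Since every numerator term is uniformly bounded by Step 1 and $q_\vartheta$ is bounded away from zero, each such expression is uniformly bounded, and the operator norm of the third-order tensor is controlled by the (finitely many) entries. Note that the lower bound on $q_\vartheta$ is available only on $\mathcal X_\eta=[0,(1-\varepsilon)L-\eta]$, which is exactly why the statement restricts $y$ to $\mathcal X_\eta$; this is harmless for the subsequent central limit theorem because the invariant density $\nu_\vartheta$ and the transition concentrate away from the upper edge $(1-\varepsilon)L$ (the support of $q_\vartheta(x,\cdot)$ is contained in $[0,(1-\varepsilon)L]$ and $\nu_\vartheta$ is bounded near any interior point by Lemma \ref{lem: uplowbound densite}).

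The main obstacle, modest as it is, is purely the bookkeeping of Step 1: one must check carefully that differentiating through $\mathbb E[\int_0^\infty e^{-\int_0^tB_\vartheta(\phi_x(s))ds}dL_t^{y/z}(\phi_x)]$ is legitimate, since the $\vartheta$-derivative of the exponential brings down a factor $-\int_0^t\partial_{\vartheta_i}B_\vartheta(\phi_x(s))ds$, which grows linearly in $t$; here the uniform bound $|\partial_{\vartheta_i}B_\vartheta|\leq C$ from Assumption \ref{assu:reg_B} bounds this factor by $Ct$, and higher derivatives bring down polynomial-in-$t$ factors, all of which are absorbed by $e^{-b_3t}$ when integrated against the local time, using $\int_0^\infty t^m e^{-b_3t}\sup_x\mathbb E[L_t^{y/z}(\phi_x)]dt\lesssim\int_0^\infty t^m e^{-b_3t}(1+t^{3/2})dt<\infty$ from Lemma \ref{lem:borne_localtime}, uniformly for $y/z$ in the compact set $[\,0,(1-\varepsilon)L/\varepsilon\,]$. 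A standard dominated-convergence / differentiation-under-the-integral argument (e.g. the mean value theorem applied to difference quotients together with these dominating functions) then concludes. Everything is uniform in $\vartheta\in\Theta$ because $\Theta$ is compact and $B_0$ together with its first three $\vartheta$-derivatives are continuous and bounded on $\Theta\times\mathcal X$.
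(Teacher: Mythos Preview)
Your proposal is correct and follows essentially the same approach as the paper: differentiate the explicit formula \eqref{eq:forme transition param} under the integral and expectation, use Assumptions \ref{assu:nondegeneracy} and \ref{assu:reg_B} to bound $B_0$ and its $\vartheta$-derivatives, absorb the resulting polynomial-in-$t$ factors by the exponential decay $e^{-b_3t}$ together with the local time estimate of Lemma \ref{lem:borne_localtime}, and then divide by the uniform lower bound on $q_\vartheta$ from Lemma \ref{lemma:bound_p}. The only minor difference is that the paper converts $\mathbb E[\int_0^\infty g(t)\,dL_t^{y/z}(\phi_x)]$ into $\int_0^\infty(-g'(t))\,\mathbb E[L_t^{y/z}(\phi_x)]\,dt$ via the integration-by-parts formula of Appendix \ref{appendix:IPP} before invoking Lemma \ref{lem:borne_localtime}, whereas you gloss over this step; making it explicit would tighten your Step 1 bookkeeping.
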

\begin{proof}
According to Lemma \ref{lemma:bound_p}, since
\begin{align*}
\Gamma_{\vartheta}(x,y)=\frac{\partial_{\vartheta}q_{\vartheta}(x,y)}{q_{\vartheta}(x,y)}
\end{align*}
componentwise, it suffices to show $|\partial_{\vartheta_i}q_{\vartheta}(x,y)|\lesssim 1$ in order to establish the first bound.
Recall that for $(x,y) \in \mathcal X \times[0, (1-\varepsilon) L]$,
\begin{align*}
q_\vartheta(x,y)=\int_{y/L}^1\frac{\ktilde(z)}{z}B_0(\vartheta,y/z)\sigma(y/z)^{-2}\mathbb{E}\left[\int_0^{\infty}e^{-\int_0^t B_0(\vartheta,\phi_x(s))ds}dL_t^{y/z}(\phi_x)\right]dz.
\end{align*}
Taking the derivative with respect to $\vartheta_i$ yields
\begin{align*}
\partial_{\vartheta_i}&q_{\vartheta}(x,y)=\int_{y/L}^1\frac{\ktilde(z)}{z}\partial_{\vartheta_i}B_0(\vartheta,y/z)\sigma(y/z)^{-2}\,\mathbb{E}\Big[\int_{0}^{\infty}e^{-\int_0^t B_0(\vartheta,\phi_x(s))ds}dL_t^{y/z}\left(\phi_x\right)\Big]dz\\
&-\int_{y/L}^1\frac{\ktilde(z)}{z}B_0(\vartheta,y/z)\sigma(y/z)^{-2}\,\mathbb{E}\Big[\int_{0}^{\infty} e^{-\int_0^t B_0(\vartheta, \phi_x(s))ds}\Big(\int_0^t\partial_{\vartheta_i}B_0(\vartheta,\phi_x(s))ds\Big) \,dL_t^{y/z}(\phi_x)\Big].
\end{align*}
By Assumption \ref{assu:unique}, \ref{assu:nondegeneracy} and \ref{assu:reg_B},
\begin{align*}
|\partial_{\vartheta_i}q_{\vartheta}(x,y)|\lesssim &\int_{y/L}^1\mathbb{E}\Big[\int_{0}^{\infty}(1+t)e^{-\int_0^t B_0(\vartheta,\phi_x(s))ds}dL_t^{y/z}\left(\phi_x\right)\Big]\ktilde(z)dz.
\end{align*}
Next, by Assumption \ref{assu:nondegeneracy},
\begin{align*}
\mathbb{E}\Big[\int_{0}^{\infty} (1+t)e^{-\int_0^t B_0(\vartheta,\phi_x(s))ds}dL_t^{y/z}(\phi_x)\Big] & \leq \mathbb{E}\big[\int_{0}^{\infty} (1+t)e^{-b_3t}dL_t^{y/z}\left(\phi_x\right)\big]  \\
& =\int_{0}^{\infty} (1-b_3(1+t))e^{-b_3t}\E[L_t^{y/z}(\phi_x)]dt,
\end{align*}
where the last equality comes from the integration by parts formula (see Appendix \ref{appendix:IPP}).
This last term is bounded by Lemma \ref{lem:borne_localtime} and $|\partial_{\vartheta_i}q_{\vartheta}(x,y)|\lesssim 1$ follows. We turn to the second bound: clearly, for $1 \leq i,j \leq d$ 
\begin{align*}
\partial_{\vartheta}\Gamma_{\vartheta}(x,y)_{i,j}=\frac{\partial_{\vartheta_i\vartheta_j}^2q_{\vartheta}(x,y)q_{\vartheta}(x,y)-\partial_{\vartheta_i}q_{\vartheta}(x,y)\partial_{\vartheta_j}q_{\vartheta}(x,y)}{q_{\vartheta}(x,y)^2}
\end{align*}
and thanks to Lemma \ref{lemma:bound_p} and the first bound, we only need to show $|\partial_{\vartheta_i\vartheta_j}^2q_{\vartheta}(x,y)|\lesssim 1$
in order to obtain the second bound. Define
$\omega_t(y,z,\vartheta) = {\bf 1}_{[yL^{-1},1]}(z)\frac{\ktilde(z)}{z\sigma(y/z)^2}\exp(-\int_0^t B_0(\vartheta, \phi_x(s))ds)$.
We have
\begin{align*}
&\partial^2_{\vartheta_i\vartheta_j}q_{\vartheta}(x,y)  =  \int_{0}^{1}\partial^2_{\vartheta_i\vartheta_j}B_0(\vartheta, y/z)\mathbb{E}\Big[\int_{0}^{\infty}\omega_t(y,z,\vartheta)dL_t^{y/z}\left(\phi_x\right)\Big]dz\\
 & + \int_0^1B_0(\vartheta,y/z)\mathbb{E}\Big[\int_{0}^{\infty} \omega_t(y,z,\vartheta)\int_0^t\partial_{\vartheta_i}B_0(\vartheta,\phi_x(s))ds \int_0^t\partial_{\vartheta_j}B_0(\vartheta,\phi_x(s))ds\,dL_t^{y/z}\left(\phi_x\right)\Big]dz \\
 & -\sum_{(\ell,\ell')\in\lbrace(i,j), (j,i)\rbrace}\int_{0}^{1}\partial_{\vartheta_\ell}B_0(\vartheta,y/z)\mathbb{E}\Big[\int_{0}^{\infty} \omega_t(y,z,\vartheta)\int_0^t\partial_{\vartheta_{\ell'}}B_0(\vartheta,\phi_x(s))dsdL_t^{y/z}(\phi_x)\Big]dz\\
& + \int_{0}^{1}B_0(\vartheta,y/z)\mathbb{E}\Big[\int_{0}^{\infty}\omega_t(y,z,\vartheta)\int_0^t\partial^2_{\vartheta_i\vartheta_j}B_0(\vartheta, \phi_x(s))dsdL_t^{y/z}\left(\phi_x\right)\Big]dz
\end{align*}
and we proceed in the same way as for the first estimate, using repeatedly Assumption \ref{assu:unique}, \ref{assu:nondegeneracy} and \ref{assu:reg_B}. The proof of the third bound is analogous.
\end{proof}

\subsubsection*{Completion of proof of Theorem \ref{thm:norm_asympt}} 
This proof is classical (see for instance van der Vaart \cite{vdV00} Theorem 5.41). We nevertheless give it  for self-containedness.
By definition of $\widehat{\vartheta}_n$
and a Taylor expansion around $\vartheta$, we have  
\begin{align*}
0 & = \sum_{u\in\mathbb{U}_n^\star}\Gamma_{\widehat{\vartheta}_n}(X_{u^-},X_u) \\
& =\sum_{u\in\mathbb{U}_n^\star}\Big(\Gamma_{\vartheta}(X_{u^-},X_u)+\partial_{\vartheta}{\Gamma}_{\vartheta}(X_{u^-},X_u)(\widehat{\vartheta}_n-\vartheta)
+(\widehat{\vartheta}_n-\vartheta)^T\partial^2_{\vartheta}{\Gamma}_{\widetilde{\vartheta}_n}(X_{u^-},X_u)(\widehat{\vartheta}_n-\vartheta)\Big),
\end{align*}
for some $\widetilde{\vartheta}_n$ on the segment line between $\vartheta$ and $\widehat{\vartheta}_n$.
Rearranging the sum and introducing the normalisation $|\mathbb U_n^\star|^{1/2}$, we derive
\begin{align} \label{eq:decom fond}
& \big(|\mathbb{U}_n^\star|^{-1}\sum_{u\in\mathbb{U}_n^\star}\partial_{\vartheta}{\Gamma}_{\vartheta}(X_{u^-},X_u)+|\mathbb{U}_n^\star|^{-1}\sum_{u\in\mathbb{U}_n^\star}(\widehat{\vartheta}_n-\vartheta)^T\partial^2_{\vartheta}{\Gamma}_{\widetilde{\vartheta}_n}(X_{u^-},X_u)\big)|\mathbb{U}_n^\star|^{1/2}(\widehat{\vartheta}_n-\vartheta) \\
=& \,-|\mathbb{U}_n^\star|^{-1/2}\sum_{u\in\mathbb{U}_n^\star}\Gamma_{\vartheta}(X_{u^-},X_u). \nonumber
\end{align}
We plan to apply an extension of the central limit theorem for bifurcating Markov chain proved in Guyon for $\mathbb U_n = \mathbb T_n$, see \cite{guyon2007limit} Corollary 24 on the right-hand side. It is not difficult to see that the result still holds if one replaces $\mathbb T_n$ by an incomplete tree according to Definition \ref{def incomplete tree}. We omit the details. By Lemma \ref{lemma:bound_p} and \ref{lemma:partialp} we have that $\mathcal Q_{\vartheta}(\Gamma_{\vartheta,i}\Gamma_{\vartheta,j})$ and $\mathcal Q_{\vartheta}(\Gamma_{\vartheta,i}\Gamma_{\vartheta,j}\Gamma_{\vartheta,k}\Gamma_{\vartheta,l})$ are bounded functions on $\mathcal{X}$ for all $1\leq i,j,k,l\leq d$.
Moreover, we have $\nu_{\vartheta}(\mathcal Q_{\vartheta}\Gamma_{\vartheta,i})=0$. Therefore
\begin{equation} \label{eq:TCL guyon}
|\mathbb{U}_n^\star|^{-1/2}\sum_{u\in\mathbb{U}_n^\star}\Gamma_{\vartheta}(X_{u^-},X_u) \rightarrow \mathcal{N}\big(0,\Psi(\vartheta)\big)
\end{equation}
in distribution as $n \rightarrow \infty$, where $\Psi(\vartheta)$ is the Fisher information matrix defined after Assumption \ref{assu:reg_B}. Next, since $\partial_{\vartheta}\Gamma_{\vartheta}$ is bounded by Lemma \ref{lemma:partialp}, we have
\begin{equation} \label{eq:conv proba den}
|\mathbb{U}_n^\star|^{-1}\sum_{u\in\mathbb{U}_n^\star}\partial_{\vartheta}{\Gamma}_{\vartheta}(X_{u^-},X_u) \rightarrow \Psi(\vartheta)
\end{equation}
in probability as $n \rightarrow \infty$. Moreover, by Lemma \ref{lemma:partialp}, we have:
$\sup_{x,y\in\mathcal{X},\vartheta\in\Theta}
 \big\|  \partial_{\vartheta}^2 \Gamma_{\vartheta}(x,y) \big\| < \infty$
and since $\widehat \vartheta_n -\vartheta$ converges to $0$ by Theorem \ref{th:consistency}, it follows that
\begin{equation} \label{eq:conv 0 reste}
|\mathbb{U}_n^\star|^{-1}\sum_{u\in\mathbb{U}_n^\star}(\widehat{\vartheta}_n-\vartheta)^T\partial^2_{\vartheta}{\Gamma}_{\widetilde{\vartheta}_n}(X_{u^-},X_u) \rightarrow 0
\end{equation} 
in probability as $n \rightarrow \infty$ tends to infinity. Combining \eqref{eq:TCL guyon}, \eqref{eq:conv proba den} and \eqref{eq:conv 0 reste} in \eqref{eq:decom fond} we finally obtain
$$\Psi(\vartheta) |\mathbb{U}_n^\star|^{1/2}(\widehat{\vartheta}_n-\vartheta) \rightarrow \mathcal{N}\big(0,\Psi(\vartheta)\big)$$
in distribution as $n \rightarrow \infty$. We conclude thanks to the invertibility of $\Psi(\vartheta)$ granted by Assumption \ref{assu:Ppsi_invertible}.

\section{Appendix} \label{sec:appendix}
\subsection{Example of a model satisfying $\rho<1/2$}\label{appendix:rho}
We elaborate on Remark \ref{rk rho petit}.
\begin{lem}
Assume that
\begin{itemize}
\item[i)] $\phi_x(t)$ si an Ornstein-Uhlenbeck process on $\mathcal X = \R$: we have $r(x)=-\beta x$ and $\sigma(x) = \sigma$ for every $x \in \mathcal X$ and some $\beta,\sigma >0$,
\item[ii)] the division rate is constant: we have $B(x) = b$ for every $x \in \mathcal X$ and some $b >0$,
\item[iii)] the fragmentation distribution is uniform: we have $\kappa(z) = 1/(1-2\varepsilon)$ on $[\varepsilon,1-\varepsilon]$ for some $\varepsilon>0$.
\end{itemize}
Then, $\mathcal Q$ admits an invariant probability distribution $\nu$ and for $V(x)=x^2$, there exist $C>0$ and $\rho \in (0,1)$ such that for every $m \geq 1$, the bound
$$\big|\mathcal Q^m\varphi-\nu(\varphi)\big|_V \leq C\rho^m\big|\varphi-\nu(\varphi)\big|_V$$
holds  as soon as $|\varphi|_V<\infty$. Moreover, for $b$ small enough, we have $\rho<1/2$. 
\end{lem}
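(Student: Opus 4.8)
The first two assertions of the lemma (existence of an invariant $\nu$ and geometric ergodicity with \emph{some} $\rho\in(0,1)$) are exactly Proposition~\ref{th:ergodicity}: the data $r(x)=-\beta x$, $\sigma(x)\equiv\sigma$, $B\equiv b$ and $\kappa(z)=(1-2\varepsilon)^{-1}\mathbf 1_{[\varepsilon,1-\varepsilon]}(z)$ trivially satisfy Assumptions~\ref{assu:unique}, \ref{assu:debut} and \ref{assu:noyau}. The only substantial point is the quantitative claim $\rho<1/2$ for $b$ small, and the plan is to revisit the drift and minorisation constants of Propositions~\ref{prop:lyapunov} and \ref{prop:minoration} in this particular example, track their behaviour as $b\downarrow0$, and feed them into the explicit expression for $\rho$ given by Theorem~1.2 in \cite{hairer2011}.

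\emph{Step 1: an exact drift condition.} Since $B\equiv b$, the lifetime of an individual is $\mathrm{Exp}(b)$-distributed and independent of its trait, so $\mathcal Q\varphi(x)=\mathbb E[\varphi(Z\phi_x(T))]$ with $T\sim\mathrm{Exp}(b)$ and $Z$ independent of $(\phi_x(t))_{t\ge0}$ with density $\ktilde=\kappa$. Using $\mathbb E[\phi_x(t)^2]=x^2e^{-2\beta t}+\tfrac{\sigma^2}{2\beta}(1-e^{-2\beta t})$ and $\mathbb E[e^{-2\beta T}]=\tfrac{b}{b+2\beta}$, one gets, for $V(x)=x^2$, the identity $\mathcal QV(x)=v_1V(x)+v_2$ with
\[
v_1=m(\kappa)\,\frac{b}{b+2\beta},\qquad v_2=m(\kappa)\,\frac{\sigma^2}{b+2\beta},\qquad m(\kappa)=\int_0^1z^2\ktilde(z)dz=\frac{(1-\varepsilon)^3-\varepsilon^3}{3(1-2\varepsilon)}.
\]
The crucial feature is $v_1\downarrow0$ and $v_2\to m(\kappa)\sigma^2/(2\beta)$ as $b\downarrow0$.

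\emph{Step 2: a quantitative minorisation.} By Proposition~\ref{prop:explicit trans}, $q(x,y)=\int_\varepsilon^{1-\varepsilon}\frac{\kappa(z)}{z}\big(\int_0^\infty be^{-bs}\rho_s(x,y/z)\,ds\big)dz$, where $\rho_s(x,\cdot)$ is the Gaussian density of $\phi_x(s)$. Since $\rho_s(x,a)\to\rho_\infty(a)$, the $\mathcal N(0,\sigma^2/2\beta)$ density, uniformly for $(x,a)$ in compacts as $s\to\infty$, while $\int_0^{s_0}be^{-bs}\rho_s(x,a)\,ds\lesssim b\,\mathbb E[L_{s_0}^a(\phi_x)]\to0$ uniformly by Lemma~\ref{lem:borne_localtime}, the inner integral converges to $\rho_\infty(y/z)$ uniformly for $x$ in a compact and $y$ in a compact. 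Consequently, fixing $w>0$ large and $M>0$ large, for $b$ small enough $q(x,y)\ge(1-\delta)q_\infty(y)$ on $\{|x|\le w\}\times[-M,M]$, where $q_\infty(y)=\int_\varepsilon^{1-\varepsilon}\frac{\kappa(z)}{z}\rho_\infty(y/z)\,dz$; hence $\inf_{|x|\le w}\mathcal Q(x,\mathcal A)\ge\lambda\,\mu(\mathcal A)$ with $\mu=q_\infty\mathbf 1_{[-M,M]}/\!\int_{-M}^Mq_\infty$ and $\lambda=(1-\delta)\int_{-M}^Mq_\infty(y)\,dy$. Thus for a fixed $w$ one has $\liminf_{b\to0}\lambda$ arbitrarily close to $1$ (let $M\to\infty$, $\delta\to0$, then $b\to0$).

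\emph{Step 3: conclusion and the main obstacle.} Apply Theorem~1.2 of \cite{hairer2011} with drift constants $(v_1,v_2)$, minorisation constant $\lambda$ and small-set level $R=w^2$; choose $w$ once and for all so large that $R>2v_2/(1-v_1)$ for every small $b$ (possible since $v_2$ stays bounded and $v_1\to0$). The theorem produces $\rho$ as an explicit function of $(v_1,v_2,\lambda,R)$, of the form $\rho=\big(1-(\lambda-\alpha_0)\big)\vee\frac{2+R\beta\gamma_0}{2+R\beta}$ for free auxiliary parameters $\alpha_0\in(0,\lambda)$, $\gamma_0\in(v_1+2v_2/R,1)$ and $\beta=\alpha_0/v_2$. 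Letting $b\to0$ gives $v_1\to0$, $v_2\to m(\kappa)\sigma^2/(2\beta)$, $\lambda\to1^-$, and the main work is the bookkeeping: one must exhibit a legitimate choice of $(\alpha_0,\gamma_0)$ — hence of $\beta$ and $R$ — for which \emph{both} terms of the maximum are strictly below $1/2$ in the limit. This is done by first sending $\lambda\to1$ so that $1-(\lambda-\alpha_0)<1/2$ for $\alpha_0$ not too small, and then enlarging $R$ so that $v_1+2v_2/R$ and $\tfrac{2+R\beta\gamma_0}{2+R\beta}$ become small; by continuity of Hairer--Mattingly's bound in its arguments there is then $b_0>0$ with $\rho<1/2$ whenever $b\le b_0$. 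Steps~1 and~2 are routine specialisations of Propositions~\ref{prop:lyapunov} and~\ref{prop:minoration}; the only delicate part is this final optimisation of the explicit rate.
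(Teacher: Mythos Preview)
Your argument is correct and reaches the same conclusion as the paper, but by a somewhat different and, in places, cleaner route.

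\textbf{Drift condition.} You compute the \emph{exact} identity $\mathcal QV(x)=v_1V(x)+v_2$ with $v_1=m(\kappa)\,b/(b+2\beta)\to 0$ as $b\to0$, exploiting that $T\sim\mathrm{Exp}(b)$ is independent of the trajectory and that $\mathbb E[\phi_x(t)^2]$ is explicit for the Ornstein--Uhlenbeck process. The paper instead reuses the general inequality of Proposition~\ref{prop:lyapunov}, which gives only $v_1=m(\kappa)$, a constant not improving with $b$. Your sharper $v_1\to0$ is not strictly needed (the paper manages with $v_1\le1/3<1/2$ in the final optimisation), but it does make the second term of Hairer--Mattingly's bound trivially small and avoids the explicit limit computation $R(w)\to\eta+(1-\eta)v_1$ carried out in the paper.

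\textbf{Minorisation.} Here the two proofs diverge most. The paper performs an explicit local-time computation: it writes $\mathbb E[L_t^y(\phi_x)]$ via the Gaussian density of the time-changed Brownian motion, obtains a closed-form lower bound $f_w(y)$ for $q(x,y)$, integrates it, and finds $\lambda=\tfrac{2}{\sqrt\pi}\int_0^{w/\sqrt2}e^{-y^2}e^{-(b/\beta)\ln(w/(y\sqrt2))}\,dy$, from which $\lambda\to\mathrm{erf}(w/\sqrt2)\to1$ is read off directly. Your approach is softer: you observe that $q(x,y)=\int_\varepsilon^{1-\varepsilon}\frac{\kappa(z)}{z}\big(\int_0^\infty be^{-bs}\rho_s(x,y/z)\,ds\big)dz\to q_\infty(y)$ uniformly on compacts as $b\to0$, by splitting the time integral at $s_0$, using uniform convergence $\rho_s\to\rho_\infty$ on the tail, and Lemma~\ref{lem:borne_localtime} for the front piece. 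This yields $\lambda\to1$ for any fixed $w$ without explicit formulae. The paper's computation buys explicit thresholds for $b$; yours buys brevity.

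\textbf{One presentational quibble.} In Step~3 you first say ``choose $w$ once and for all'' and later speak of ``enlarging $R$''; these should be reconciled. The correct order is: first fix $w$ (hence $R=w^2$) large enough that $R>2v_2/(1-v_1)$ and $v_1+2v_2/R<1/2$ hold for all small $b$ (possible since $v_1\to0$ and $v_2$ stays bounded); then, for this $w$, your Step~2 gives $\lambda\to1$ as $b\to0$; finally choose $\alpha_0\in(0,\lambda-1/2)$ and $\gamma_0$ just above $v_1+2v_2/R$, and check that $R(\alpha_0/v_2)(1-2\gamma_0)>2$, which holds for $R$ large enough. With this ordering the argument is complete.
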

\begin{proof}
According to Proposition \ref{prop:lyapunov}, the drift condition holds true. Next, we slightly modify the proof of Proposition \ref{prop:minoration}.\\

\noindent {\it Step 1).} For large enough $w>0$, we aim at finding $\lambda>1/2$ and a probability measure $\mu$ on $\mathcal{X}$ such that
\begin{align*}
\inf_{\{x, |x|\leq w \}}\mathcal Q(x,\mathcal A)\geq \lambda\mu(\mathcal A)
\end{align*}
for every Borel set $\mathcal A \subset \mathcal X$. Let $x\in [-w,w]$ and $\mathcal A \subset \mathcal X$ be a Borel set. In this setting, we have
\begin{align*}
\mathcal Q(x,\mathcal A) & =\frac{b}{1-2\varepsilon}\int_\varepsilon^{1-\varepsilon}\int_{0}^{\infty} \mathbb{E}\Big[\mathbf{1}_{\left\lbrace z\phi_x(t)\in \mathcal A\right\rbrace}\Big]e^{-bt}dt dz.
\end{align*}
Using successively Fubini's theorem and the occupation time formula, we get
\begin{align*}
\mathcal Q(x,\mathcal A) =\frac{b}{\sigma^2(1-2\varepsilon)}\int_\varepsilon^{1-\varepsilon}\int_{\mathbb R} \mathbf 1_{\{zy\in\mathcal A\}} \mathbb{E}\Big[ \int_0^\infty e^{-bt}dL_t^y(\phi_x)\Big]dydz.
\end{align*}
Integration by parts (see Appendix \ref{appendix:IPP}) yields
\begin{align}\label{eq:1}
\mathcal Q(x,\mathcal A) =\frac{b^2}{\sigma^2(1-2\varepsilon)}\int_\varepsilon^{1-\varepsilon}\int_{\mathbb R} \mathbf 1_{\{zy\in\mathcal A\}} \int_0^\infty e^{-bt}\mathbb{E}\Big[L_t^y(\phi_x)\Big]dtdydz.
\end{align}
We next compute the expectation of the local time. We have
\begin{align*}
\frac{1}{\sigma^2}\mathbb{E}\Big[L_t^y(\phi_x)\Big] & = \mathbb{E}\Big[\lim_{\varepsilon\downarrow 0}\frac{1}{\varepsilon}\int_0^t \mathbf 1_{\{y\leq \phi_x(s)\leq y+\varepsilon\}}ds\Big]\\
& =\lim_{\varepsilon\downarrow 0}\frac{1}{\varepsilon}\int_0^t \mathbb P \Big(y\leq \phi_x(s)\leq y+\varepsilon\Big)ds.
\end{align*}
Since $\phi_x(t)$ is an Ornstein-Uhlenbeck process, we have the representation
$$\phi_x(t) = \frac{\sigma}{\sqrt{2\beta}}W_x\big(e^{2\beta t}\big)e^{-\beta t},
$$
where $W_x$ is a Brownian motion starting from $x$. Then,
\begin{align}\label{eq:2}\nonumber
\frac{1}{\sigma^2}\mathbb{E}\Big[L_t^y(\phi_x)\Big] & = \lim_{\varepsilon\downarrow 0}\frac{1}{\varepsilon}\int_0^t \mathbb P \Big(y\leq \frac{\sigma}{\sqrt{2\beta}}W_x\big(e^{2\beta s}\big)e^{-\beta s}\leq y+\varepsilon\Big)ds\\\nonumber
&  = \lim_{\varepsilon\downarrow 0}\frac{1}{\varepsilon}\int_0^t \Big[F\Big(e^{2\beta s},(y+\varepsilon)\frac{\sqrt{2\beta}}{\sigma}e^{\beta s}\Big)-F\Big(e^{2\beta s},y\frac{\sqrt{2\beta}}{\sigma}e^{\beta s}\Big)\Big]ds\\\nonumber
& = \int_0^te^{\beta s}\frac{\sqrt{2\beta}}{\sigma}\frac{1}{\sqrt{2\pi e^{2\beta s}}}e^{-\frac{1}{2e^{2\beta s}}\Big(x-ye^{\beta s}\frac{\sqrt{2\beta}}{\sigma}\Big)^2}ds\\
& = \frac{\sqrt{2\beta}}{\sigma}\frac{1}{\sqrt{2\pi}}\int_0^te^{-\frac{1}{2}\Big(x e^{-\beta s}-y\frac{\sqrt{2\beta}}{\sigma}\Big)^2}ds
\end{align}
where $F(t,x)$ is the cumulative density function of $W_x(t)$. Therefore, combining \eqref{eq:1} and \eqref{eq:2} we get
\begin{align*}
\mathcal Q(x,\mathcal A) =\frac{b^2}{1-2\varepsilon}\frac{\sqrt{2\beta}}{\sigma}\frac{1}{\sqrt{2\pi}}\int_\varepsilon^{1-\varepsilon}\int_{\mathbb R} \mathbf 1_{\{zy\in\mathcal A\}} \int_0^\infty e^{-bt}\int_0^te^{-\frac{1}{2}\big(x e^{-\beta s}-y\frac{\sqrt{2\beta}}{\sigma}\big)^2}dsdtdydz. 
\end{align*}
Integrating by parts again and a change of variables yield
\begin{align*}
\mathcal Q(x,\mathcal A) & =\frac{b}{1-2\varepsilon}\frac{\sqrt{2\beta}}{\sigma}\frac{1}{\sqrt{2\pi}}\int_\varepsilon^{1-\varepsilon}\frac{1}{z}\int_{\mathbb R} \mathbf 1_{\{u\in\mathcal A\}} \int_0^\infty e^{-bt}e^{-\frac{1}{2}\big(x e^{-\beta t}-\frac{u}{z}\frac{\sqrt{2\beta}}{\sigma}\big)^2}dtdudz.
\end{align*}
Next, $\big(x e^{-\beta t}-\frac{u}{z}\frac{\sqrt{2\beta}}{\sigma}\big)^2\leq \big(we^{-\beta t}+\frac{|u|}{z}\frac{\sqrt{2\beta}}{\sigma}\big)^2$ for all $x\in [-w,w]$, $t\geq 0$ and $z\in[\varepsilon,1-\varepsilon]$. 
Finally, using Fubini's theorem,
\begin{align*}
\mathcal Q(x,\mathcal A)\geq \frac{1}{\sqrt{\pi}} \int_\mathbb{R} \mathbf{1}_{\lbrace u\in\mathcal{A}\rbrace}f_w(u)du,
\end{align*}
with 
\begin{align*}
f_w(u) = b\frac{\sqrt{\beta}}{\sigma}\int_\varepsilon^{1-\varepsilon}\frac{1}{z} \int_0^\infty e^{-bt}e^{-\frac{1}{2}\big(w e^{-\beta t}+\frac{|u|}{z}\frac{\sqrt{2\beta}}{\sigma}\big)^2}dt\frac{dz}{1-2\varepsilon}.
\end{align*}
We now construct a probability measure from $f_w$. First, 
\begin{align*}
\int_{\mathbb R} e^{-\frac{1}{2}\big(we^{-\beta t}+\frac{|u|}{z}\frac{\sqrt{2\beta}}{\sigma}\big)^2}du = 2 \frac{z\sigma}{\sqrt{\beta}}\int_{\frac{w}{\sqrt{2}}e^{-\beta t}}^\infty e^{-y^2}dy.
\end{align*}
Combining this with Fubini's theorem, we get
\begin{align*}
\int_{\mathbb R}f_w(u)du = 2b\int_0^{\infty}e^{-bt}\Big( \int_{\frac{w}{\sqrt{2}}e^{-\beta t}}^\infty e^{-y^2}dy\Big)dt.
\end{align*}
Fubini's theorem again yields
\begin{align*}
\int_{\mathbb R}f_w(u)du = 2b\int_0^{\frac{w}{\sqrt{2}}}e^{-y^2}\Big( \int_{\frac{1}{\beta}\ln(w/(y\sqrt 2))}^\infty e^{-bt}dt\Big)dy = 2\int_0^{\frac{w}{\sqrt{2}}}e^{-y^2}e^{-\frac{b}{\beta}\ln(w/(y\sqrt 2 ))}dy.
\end{align*}
Finally, define the probability measure $\mu_w(dy)=g_w(y)dy$ on $\mathcal X$ by
\begin{align*}
g_w(y)=\Big(2\int_0^{\frac{w}{\sqrt{2}}}e^{-y^2}e^{-\frac{b}{\beta}\ln(w/(y\sqrt 2))}dy\Big)^{-1}f_w(y),
\end{align*}
and let $$\lambda=\frac{2}{\sqrt{\pi}} \int_0^{\frac{w}{\sqrt{2}}}e^{-y^2}e^{-\frac{b}{\beta}\ln(w/(y\sqrt 2))}dy.$$
Moreover, as
\begin{align*}
\lambda\underset{b\rightarrow 0}{\longrightarrow}\frac{2}{\sqrt{\pi}} \int_0^{\frac{w}{\sqrt{2}}}e^{-y^2}dy\quad \text{and}\quad \frac{2}{\sqrt{\pi}} \int_0^{\frac{w}{\sqrt{2}}}e^{-y^2}dy\underset{w\rightarrow \infty}{\longrightarrow} 1,
\end{align*}
there exists $b_0>0$ and $w_0>0$ such that $\lambda=\lambda(w_0,b_0)>1/2$ and we thus have established
\begin{align*}
\mathcal Q(x,\mathcal A)\geq \lambda\mu(\mathcal A),
\end{align*}
with $\lambda>1/2$.\\

\noindent {\it Step 2).} Applying Theorem 1.2. in \cite{hairer2011}, we obtain the exponential convergence of the tagged-chain at rate $$\rho=(1-(\lambda-\lambda_0))\vee \frac{2+w\gamma v_0}{2+w\gamma},$$
for any $\lambda_0\in(0,\lambda)$ and $v_0\in (v_1+2v_2/w, 1)$, where $\gamma = \lambda_0/v_2$. We just proved that $\lambda(w_0,b_0)>1/2$ so that we can choose $\lambda_0\in(0,\lambda)$ such that $1-\lambda+\lambda_0<1/2$. Next, let $\eta\in(0,1)$ be such that $v_0 = \eta+(1-\eta)(v_1+2v_2/w)$. Then,  
$$R(w)=\frac{2+w\gamma v_0}{2+w\gamma}=\frac{2+w\gamma\eta+ (1-\eta)w\gamma v_1+2(1-\eta)\gamma v_2}{2+w\gamma}$$
 is a decreasing function in $w$ and 
$$\lim_{w\rightarrow \infty} R(w) = \eta+(1-\eta)v_1.$$ 
Moreover, according to the proof of Proposition \ref{prop:lyapunov}, $v_1 = m(\kappa)=\frac{1}{3}(\varepsilon^2-\varepsilon+1)\leq 1/3$. Finally, we can choose $\eta\in(0,1)$ and $w>2v_2/(1-v_1)$ such that $R(w)<1/2$ and we get the result. 
\end{proof}

\subsection{Proof of Lemma \ref{lem:borne_localtime}} \label{proof of lem loc time}
{\it Step 1)}. Fix $\delta >0$ and let $\mathcal K_\delta = \{y \in \mathcal X, \inf_{z \in \mathcal K}|y-z|\leq \delta\}$ denote the $\delta$-enlargement of $\mathcal K$. 
For $x \in \mathcal X$, let
$$\tau_x=\inf\{t\geq 0, \phi_x(t) \in \mathcal K_\delta\},\;\;\inf \emptyset = \infty,$$
and 
$$\phi^{\mathcal K_\delta}_x(t) = 
\left\{
\begin{array}{lll}
\phi_{\sup \mathcal K_\delta }\big((t-\tau_x)_+\big) & \mathrm{if} & x > \sup \mathcal K_\delta \\ 
\phi_x(t) & \mathrm{if} & x \in \mathcal K_\delta \\ 
\phi_{\inf \mathcal K_\delta }\big((t-\tau_x)_+\big)& \mathrm{if} & x < \inf \mathcal K_\delta.
\end{array}
\right.
$$
For every $y \in \mathcal K$, we have $L_t^y(\phi_x) = L_t^y(\phi^{\mathcal K_\delta}_x)$, and by It\^o-Tanaka's formula, it follows that
$$L_t^y(\phi_x) = L_t^y(\phi^{\mathcal K_\delta}_x) = \big|\phi^{\mathcal K_\delta}_x(t)-y\big|-|\phi^{\mathcal K_\delta}_x(0)-y|-\int_0^t \mathrm{sgn}\big(\phi^{\mathcal K_\delta}_x(s)-y\big)d\phi^{\mathcal K_\delta}_x(s).$$
Assume first that $x > \sup \mathcal K_\delta$. 
Observing that $L_t^y(\phi_x)=0$ on  $\{\tau_x \geq t\}$, and that $d\phi^{\mathcal K_\delta}_x(s)$ vanishes on $[0,\tau_x)$ on $\{\tau_x < t\}$,
we readily have
\begin{align}
L_t^y(\phi_x) & =  \big|\phi^{\mathcal K_\delta}_x(t)-y\big|-|\sup \mathcal K_\delta-y|-\int_{\tau_x \wedge t}^t \mathrm{sgn}\big(\phi^{\mathcal K_\delta}_x(s)-y\big)d\phi^{\mathcal K_\delta}_x(s) \nonumber\\
& = \big|\phi_{\sup \mathcal K_\delta}\big((t-\tau_x)_+\big)-y\big|-|\sup \mathcal K_\delta-y|-\int_{0}^{(t-\tau_x)_+} \mathrm{sgn}\big(\phi_{\sup \mathcal K_\delta}(s)-y\big)d\phi_{\sup \mathcal K_\delta}(s). \label{eq:tanaka pseudostopping}
\end{align}
We plan to bound each term separately.\\

\noindent {\it Step 2).} By It\^o's formula,
$(\phi_{\sup \mathcal K_\delta}(t)-y)^2 =  (\sup \mathcal K_\delta-y)^2+I+II,$
with 
\begin{align*}
I & =  \int_0^{t} \big(2(\phi_{\sup \mathcal K_\delta}(s)-y)r(\phi_{\sup \mathcal K_\delta}(s))+\sigma(\phi_{\sup \mathcal K_\delta}(s))^2\big)ds, \\
II & = 2\int_0^{t} (\phi_{\sup \mathcal K_\delta}(s)-y)\sigma(\phi_{\sup \mathcal K_\delta}(s))dW_s.
\end{align*}
First,
$$I  \leq 2\sigma_1^{-2}\int_{\mathbb{R}}(z-y)r(z)L_{t}^z(\phi_{\sup \mathcal K_\delta})dz+t\sigma_2^2 $$
by the occupation times formula and Assumption \ref{assu:unique}. 
Introduce $|y|_{r_2} = |y|\vee r_2$, where $r_2$ is defined in Assumption \ref{assu:unique}.
Since $z-y>0$ and $r(z)<0$ for $z > |y|_{r_2}$, we have $\int_{|y|_{r_2}}^{\infty} (z-y)r(z)L_t^z(\phi_x)dz < 0$. Similarly $\int_{-\infty}^{-|y|_{r_2}} (z-y)r(z)L_t^z(\phi_x)dz < 0$. It follows that
\begin{align*}
\int_{\mathbb{R}}(z-y)r(z)L_t^z(\phi_x)dz & \leq \int_{-|y|_{r_2}}^{|y|_{r_2}}(z-y)r(z)L_t^z(\phi_x)dz \\
& \leq r_1\int_{-|y|_{r_2}}^{|y|_{r_2}} |z-y|(1+|z|)L_t^z(\phi_x)dz \\
& \leq r_1(|y|_{r_2}-y)(1+|y|_{r_2})\int_{\mathbb{R}}L_t^z(\phi_x)dz \\
& \leq r_1(|y|_{r_2}-y)(1+|y|_{r_2})t,
\end{align*}
therefore
$$I \leq 2\sigma_1^{-2}r_1(|y|_{r_2}-y)(1+|y|_{r_2})t+\sigma_2^2t = t\alpha(y)$$
say.  Since $\E[II]=0$, we derive by Cauchy-Schwarz's inequality
\begin{equation} \label{controle Ito}
\E\big[\big|\phi_{\sup \mathcal K_\delta}(t)-y\big|\big]  \leq \sqrt{(\sup \mathcal K_\delta-y)^2+t \alpha(y)}.
\end{equation}

\noindent {\it Step 3).} We are ready to control each term of \eqref{eq:tanaka pseudostopping}. We have
\begin{align*}
& \E\big[\big|\phi_{\sup \mathcal K_\delta}\big((t-\tau_x)_+\big)-y\big|\big] \\
 \leq\, & |\sup \mathcal K_\delta-y| + \E\big[\int_0^{(t-\tau_x)_+}\big|r\big(\phi_{\sup \mathcal K_\delta}(s)\big)\big|ds\big]+ \E\big[\big|\int_0^{(t-\tau_x)_+}\sigma\big(\phi_{\sup \mathcal K_\delta}(s)\big)dW_s\big|\big]\\
  \leq\, & |\sup \mathcal K_\delta-y| + r_1\E\big[\int_0^t(1+|\phi_{\sup \mathcal K_\delta}(s)|)ds\big]+ \E\big[\sup_{u \leq t}\big(\int_0^{u}\sigma\big(\phi_{\sup \mathcal K_\delta}(s)\big)dW_s\big)^2\big]^{1/2}\\
   \leq\, & |\sup \mathcal K_\delta-y| + r_1t + r_1\E\big[\int_0^t|\phi_{\sup \mathcal K_\delta}(s)|ds\big]+ \sqrt{2}\sigma_2 t \\
   \leq\, & |\sup \mathcal K_\delta-y| + r_1t + r_1\int_0^t\sqrt{(\sup \mathcal K_\delta)^2+s\alpha(0)}ds+ \sqrt{2}\sigma_2 t \\
   \lesssim\, & 1 + t^{3/2}, 
\end{align*}
where we successively applied Assumption \ref{assu:unique}, Doob's inequality and \eqref{controle Ito}.
In the same way
\begin{align*}
& \big|-\int_{0}^{(t-\tau_x)_+} \mathrm{sgn}\big(\phi_{\sup \mathcal K_\delta}(s)-y\big)d\phi_{\sup \mathcal K_\delta}(s)\big| \\ 
\leq & r_1\int_{0}^{t} (1+|\phi_{\sup \mathcal K_\delta}(s)|)ds+\sup_{u \leq t}\big|\int_0^{u}\mathrm{sgn}(\phi_{\sup \mathcal K_\delta}(s)-y)\sigma\big(\phi_{\sup \mathcal K_\delta}(s)\big)dW_s\big|.
\end{align*}
Taking expectation and using the foregoing arguments, this last quantity is also of order $1+t^{3/2}$ and Lemma \ref{lem:borne_localtime} is proved for $x > \sup \mathcal K_\delta$.\\

\noindent {\it Step 4).} If $x < \inf \mathcal K_\delta$, we apply the same arguments, replacing $|\sup \mathcal K_\delta |$ by $|\inf \mathcal K_\delta|$ with obvious changes. Likewise if $x \in \mathcal K_\delta$ we may replace $|\sup \mathcal K_\delta |$ by 
$\max\{|\sup \mathcal K_\delta |, |\inf \mathcal K_\delta |\}$.

\subsection{Proof of Proposition \ref{thm:convl2} in the case $\varrho = 0$}\label{appendix:varrho=0}
Without loss of generality, we assume that there is only one individual in each generation and thus $|\mathbb U_n| = n$. For the sake of readability, we denote by $i$ this unique individual in $\mathbb{G}_i\bigcap \mathbb{U}_n$. We follow the same steps as in the case $\varrho>0$, slightly adapting the proof. In particular, the triangle inequality used in \noindent {\it Step 1).} in the case $\varrho>0$ is not accurate enough when $\varrho = 0$.\\ 

\noindent {\it Step 1').} 
\begin{align*}
\E_{\mu}[\mathcal M_{\mathbb U_n}(\psi)^2] & =|\mathbb{U}_n^\star|^{-2}\mathbb{E}_{\mu}\big[\big(\sum_{m=1}^n\sum_{u\in\mathbb{G}_m \cap \mathbb U_n}\psi(X_{u^-},X_u)\big)^2\big] \\
& = |\mathbb{U}_n^\star|^{-2}\mathbb{E}_{\mu}\big[\big(\sum_{m=1}^n\psi(X_{m-1},X_m)\big)^2\big]\\
& = |\mathbb{U}_n^\star|^{-2}(I+ II),
\end{align*} 
where
\begin{align*}
I  & = \sum_{m=1}^n\mathbb{E}_{\mu}\big[\psi(X_{m-1},X_{m})^2\big],\\
II  & = 2\sum_{i=1}^n\sum_{j = i+1}^n\mathbb{E}_{\mu}\big[\psi(X_{i-1},X_i)\psi(X_{j-1},X_j)\big].
\end{align*}
\noindent {\it Step 2').} The control of $I$ is as before straightforward: using Lemma \ref{lem: estimates action transition} we obtain as before $I\lesssim n |\psi^2|_\mu$.\\
\noindent {\it Step 3').} By the Markov property
\begin{align*}
II & = 2 \sum_{i=1}^n\sum_{j = i+1}^n\mathbb{E}_{\mu}\big[\psi(X_{i-1},X_i)\mathcal Q^{j-i}(X_i)\big].
\end{align*}
We further decompose $II =2( III+ IV)$ having
\begin{align*}
& III = \sum_{i=1}^n\mathbb{E}_{\mu}\big[\psi(X_{i-1},X_i)\mathcal Q(X_i)\big],\\
& IV =\sum_{i=1}^{n}\sum_{j = i+2}^n\mathbb{E}_{\mu}\big[\psi(X_{i-1},X_i)\mathcal Q^{j-i}(X_i)\big].
\end{align*} 
Using Lemma \ref{lem: estimates action transition}, we get $III\lesssim n |\psi_\star|_1|\psi|_{\mu}.$ Moreover,
\begin{align*}
IV \lesssim n |\psi|_{\wedge 1}\sum_{i = 1}^n \mu(Q^i\psi)\lesssim n |\psi|_{\wedge 1}|\psi_\star|_1(1+\mu(V))\rho^n, 
\end{align*}
which yields the result.

\subsection{Integration by parts formula}\label{appendix:IPP}
We prove in this section the following formula for the local time: for all $a>0$ and $x\in\mathcal{X}$,
\begin{align*}
\E\Big[\int_0^\infty e^{-as}dL_s^x\Big]= a\int_0^\infty e^{-as}\E\Big[L_s^x\Big]ds.
\end{align*}
First, for $t\geq 0$, the integration by part formula (see \cite[Proposition 0.4.5]{RY}) yields
\begin{align*}
e^{-at}L_t^x = \int_0^t e^{-as}dL_s^x-a\int_0^t e^{-as}L_s^x ds.
\end{align*}
Moreover, we have
\begin{align*}
\liminf_{t\rightarrow +\infty} \E \Big[e^{-at}L_t^x\Big] = 0 \geq \E\Big[\liminf_{t\rightarrow \infty}e^{-at}L_t^x\Big]\geq 0,
\end{align*}
using Lemma \ref{lem:borne_localtime} and Fatou's lemma. Finally
\begin{align*}
0 = \E\Big[\int_0^\infty e^{-as}dL_s^x\Big]- a\E\Big[\int_0^\infty e^{-as}L_s^xds\Big],
\end{align*}
and the result follows by Fubini's theorem.

\subsection{Proof of Proposition \ref{prop:constant drift}} \label{appendix:preuve prop constant drift}
Remember that
\begin{align*}
\Psi(\vartheta)=\nu_{\vartheta}\Big(\mathcal Q_{\vartheta}\Big(\frac{\partial_{\vartheta}q_{\vartheta}}{q_{\vartheta}}\Big)^2\Big)=\int_{\mathcal{X}}\nu_{\vartheta}(dx)\int_{\mathcal{X}}\frac{\big(\partial_{\vartheta}q_{\vartheta}(x,y)\big)^2}{q_{\vartheta}(x,y)}dy. 
\end{align*}
If $\mathcal A\subset \mathcal X$ is a Borel set with $\text{Leb}(\mathcal A)>0$, we have
\begin{align*}
\nu_{\vartheta}(\mathcal A)=\int_{\mathcal	{X}\times \mathcal X}{\bf 1}_{\mathcal A}(y)q_{\vartheta}(x,y)\nu_{\vartheta}(dx)dy \geq \inf_{x,y}q_{\vartheta}(x,y)\, \text{Leb}(\mathcal A)>0 
\end{align*}
since $\inf_{x,y \in \mathcal X}q_{\vartheta}(x,y) >0$ by Lemma \ref{lemma:bound_p}.
By continuity of $y\mapsto \partial_{\vartheta}q_{\vartheta}(x,y)$ on $[0,L]$, it suffices then to show the existence $x,y\in\mathcal{X}$ such that $\partial_{\vartheta}q_{\vartheta}(x,y)>0$. For $x,y\in \mathcal X$, we have
\begin{align*}
\partial_{\vartheta}q_{\vartheta}(x,y) & =\int_{\varepsilon\vee yL^{-1}}^{1-\varepsilon}\frac{\ktilde(z)}{z}\sigma^{-2}\mathbb{E}\Big[\int_0^\infty (1-\vartheta t)e^{-\vartheta t}dL_t^{y/z}(\phi_x)\Big]dz \\
&=\int_{y(1-\varepsilon)^{-1}}^{y\varepsilon^{-1}}\ktilde(y/u)\sigma^{-2}\mathbb{E}\Big[\int_0^\infty (1-\vartheta t)e^{-\vartheta t}dL_t^{u}\left(\phi_x\right)\Big]\frac{du}{u} \\
&=\mathbb{E}\Big[\int_0^{\infty}\ktilde\big(y/\phi_x(t)\big) (1-\vartheta t)e^{-\vartheta t}\mathbf{1}_{\{y(1-\varepsilon)^{-1}\leq \phi_x(t)\leq y\varepsilon^{-1}\}}\frac{dt}{\phi_x(t)}\Big] \\
&= \frac{1}{1-2\varepsilon}\int_0^{\infty}(1-\vartheta t)e^{-\vartheta t}\mathbb{E}\Big[\mathbf{1}_{\{y(1-\varepsilon)^{-1}\leq \phi_x(t)\leq y\varepsilon^{-1}\}}\frac{1}{\phi_x(t)}\Big]dt
\end{align*}
by the change of variable $u = yz^{-1}$, the occupation times formula, and the specific form of $\kappa$.
For $t\geq 0$, define
\begin{align*}
A_t(x,y)=\mathbb{E}\Big[\mathbf{1}_{\{y(1-\varepsilon)^{-1}\leq \phi_x(t)\leq y\varepsilon^{-1}\}}\frac{1}{\phi_x(t)}\Big]=\int_{y(1-\varepsilon)^{-1}}^{y\varepsilon^{-1}}\rho_t(x,z)\frac{dz}{z},
\end{align*}
for which a closed-form formula is known, see for instance \cite{linetsky2005transition}, Section 4.1, given by
\begin{align*}
\rho_t(x,z)=&\frac{2r_1e^{2r_1x}}{e^{2r_1L}-1}+\frac{2}{L}e^{r_1(z-x)}\sum_{n=1}^\infty \frac{e^{-a(n)t/2}}{a(n)}g(n,x)g(n,z),
\end{align*}
with
\begin{align*}
g(n,x)=\frac{\pi n}{L}\cos\left(x\frac{\pi n}{L}\right)+r_1\sin\left(x\frac{\pi n}{L}\right),\;\;\text{and}\;\;a(n) = r_1^2+\pi^2n^2/L^2. 
\end{align*}
It follows that
\begin{align*}
A_t(x,y)=\frac{2r_1e^{2r_1x}}{e^{2r_1L}-1}\log\left(\frac{1-\varepsilon}{\varepsilon}\right)+\frac{2}{L}e^{-r_1x}\sum_{n=1}^\infty \frac{e^{-a(n)t/2}}{a(n)}g(n,x)\mathcal I(n,y)
\end{align*}
with
$\mathcal I(n,y)=\int_{y(1-\varepsilon)^{-1}}^{y\varepsilon^{-1}}e^{r_1z}g(n,z)\frac{dz}{z}$,
and therefore
\begin{align*}
\partial_{\vartheta}q_{\vartheta}(x,y)& = \frac{1}{1-2\varepsilon}\int_0^{\infty}(1-\vartheta t)e^{-\vartheta t}A_t(x,y)dt
\\
& = \frac{1}{1-2\varepsilon}\frac{2}{L}e^{-r_1x}\sum_{n=1}^\infty \Big(\int_0^{\infty}(1-\vartheta t)e^{-\vartheta t}\frac{e^{-a(n)t/2}}{a(n)}dt\Big)g(n,x)\mathcal I(n,y) \\
& = \frac{1}{(1-2\varepsilon)L}\int_{y(1-\varepsilon)^{-1}}^{y\varepsilon^{-1}}e^{-r_1(x-z)}\sum_{n=1}^\infty\frac{g(n,x)g(n,z)}{(\vartheta+a(n)/2)^2}\frac{dz}{z}.
\end{align*}
Let $x\in [0,L]$ be such that $g(n,x)\neq 0$ for every $n\geq 1$. 
Since $x\mapsto g(n,x)$ is continuous  on $[0,L]$, there exists $0<\varepsilon_n<\frac{1}{2}$ such that $g(n,x)g(n,z)>0$ for all $z\in \mathcal J(\varepsilon_n,x)=[2\varepsilon_n x,2(1-\varepsilon_n)x]$. Let $N>0$ be such that for all $z\in \mathcal J(\varepsilon_1,x)$:
\begin{align*}
\left|R_N(x,z)\right|=\Big|\sum_{n=N+1}^\infty\frac{g(n,x)g(n,z)}{(\vartheta+a(n)/2)^2}\Big|< \frac{g(1,x)g(1,z)}{(\vartheta+a(1)/2)^2},
\end{align*}
which exists because by normal convergence of the above series. Then, for every  $z\in \mathcal J(\max\{\varepsilon_n,1\leq n\leq N\},x)$
we have
\begin{align*}
\sum_{n=1}^\infty\frac{g(n,x)g(n,z)}{(\vartheta+a(n)/2)^2}=\sum_{n=1}^N\frac{g(n,x)g(n,z)}{(\vartheta+a(n)/2)^2}+R_N(x,z)>\frac{g(1,x)g(1,z)}{(\vartheta+a(n)/2)^2}-\left|R_N(x,z)\right|>0.
\end{align*}
Finally, for $\varepsilon>\max\{\varepsilon_n,1\leq n\leq N\}$, picking $y=2\varepsilon(1-\varepsilon)x$ yields $[y(1-\varepsilon)^{-1},y\varepsilon^{-1}]=\mathcal J(\varepsilon,x)\subset \mathcal J(\max\{\varepsilon_n,1\leq n\leq N\},x)$ so that $\partial_{\vartheta}q_{\vartheta}(x,y)>0$.

\subsection*{Acknowledgements} We are grateful to V. Bansaye for helpful discussion and comments, as well the insightful input of two anonymous referees. A.M. acknowledges partial support by the Chaire Mod\'elisation Math\'ematique et Biodiversit\'e of Veolia Environnement - \'Ecole Polytechnique - Museum National Histoire Naturelle - F.X. and by the French national research agency (ANR) via project MEMIP (ANR-16-CE33-0018).

\bibliographystyle{plain}
\bibliography{bibli_HM2017}

\end{document}